\begin{document}
\author{Paige Randall North\thanks{This material is based upon work supported by the Air Force Office of Scientific Research under award number FA9550-16-1-0212.}}
\title{Type-theoretic weak factorization systems}
\date{\today}

\maketitle

\begin{abstract}
This article presents three characterizations of the weak factorization systems on finitely complete categories that interpret intensional dependent type theory with $\Sigma$-, $\Pi$-, and $\Id$-types. The first characterization is that the weak factorization system $(\mathsf L, \mathsf R)$ has the properties that $\mathsf L$ is stable under pullback along $\mathsf R$ and that all maps to a terminal object are in $\mathsf R$. We call such weak factorization systems \emph{type-theoretic}. The second is that the weak factorization system has an \emph{$\Id$-presentation}: roughly, it is generated by $\Id$-types in the empty context. The third is that the weak factorization system $(\mathsf L, \mathsf R)$ is generated by a \emph{Moore relation system}, a generalization of the notion of Moore paths.
\end{abstract}

\tableofcontents

\section{Introduction}

This paper is the second in a series (based upon the author's thesis \cite{Nor17}) in which we study categorical interpretations of dependent type theory of a certain species: display map categories. It has long been known that categorical interpretations of dependent type theory induce weak factorization systems \cite{GG08}, and so our goal is to characterize the weak factorization systems which harbor such interpretations.

In the first paper \cite{Nor19}, we considered display map categories modeling $\Sigma$- and $\Id$-types whose underlying category is Cauchy complete. We showed there that the induced weak factorization system is \emph{itself} a display map category modeling $\Sigma$- and $\Id$-types. This simplifies our problem: if we want to decide whether a given weak factorization system is induced by such a display map category modeling $\Sigma$- and $\Id$-types, then we only need to decide whether the weak factorization system itself is a display map category modeling $\Sigma$- and $\Id$-types. 

In the present paper, we turn to this problem: deciding whether a weak factorization system is a display map category modeling $\Sigma$- and $\Id$-types. Our main theorem is the following characterization:
\begin{mainthm}
Consider a category $\C$ with finite limits.
The following properties of any weak factorization system $(\mathsf L, \mathsf R)$ on $\C$ are equivalent:
\begin{enumerate}
\item it has an $\Id$-presentation;
\item it is type-theoretic;
\item it is generated by a Moore relation system;
\item $(\C, \mathsf R)$ is a display map category modeling $\Sigma$- and $\Id$-types.
\end{enumerate}
\end{mainthm}
\noindent A weak factorization system $(\mathsf L, \mathsf R)$ is \emph{type-theoretic} when all morphisms to the terminal object are in $\mathsf R$ (a necessary condition for $(\C, \mathsf R)$ to be a display map category) and when $\mathsf L$ is stable under pullback along $\mathsf R$ (a necessary condition for $(\C, \mathsf R)$ to model $\Pi$-types). Thus, the equivalence between (2) and (4) tells us that a category with a weak factorization system is a display map category modeling $\Sigma$- and $\Id$-types just when these two conditions hold.

To prove this equivalence, we introduce the notions of \emph{$\Id$-presentations} and, perhaps more interestingly, of \emph{Moore relation systems}. Roughly, a weak factorization has an {$\Id$-presentation} if it is induced by a model of $\Id$-types. A Moore relation system is an explicit algebraic presentation of the weak factorization systems under consideration. These are closely related to the path object categories of \cite{BG12} which are used to model identity types. However, a Moore relation system is a weaker notion than that of path object category, and what we lose in strictness, we make up for in the equivalence above.

This paper is organized as follows. Throughout, we fix a finitely complete category $\C$. In Section \ref{sec:relandfact}, we introduce the objects of study and the categories that contain them. In particular, we describe the categories $\fact[\C]$ of factorizations and $\rel[\C]$ of relations, their full subcategories $\ttWFS[\C] \hookrightarrow \fact[\C]$ of type-theoretic weak factorization systems and $\IdPres[\C] \hookrightarrow \rel[\C]$ of $\Id$-presentations, and functors $\reltofact: \rel[\C] \leftrightarrows \fact[\C]: \facttorel $. We then show in the following sections that these functors restrict to an equivalence $|\reltofact|: |\IdPres[\C] | \simeq |\ttWFS[\C]|: |\facttorel| $ where $| \text{ - } | $ denotes the proset truncation. We use the proset truncation because it makes isomorphism in $|\ttWFS[\C]|$ the usual notion of sameness between weak factorization systems (that is, having the same left and right classes of morphisms). In Section \ref{sec:mrs}, we describe Moore relation systems and show for that every Moore relation system $R$, the factorization $\reltofact (R)$ that it produces is a type-theoretic weak factorization system. In Section \ref{sec:bigdiagram}, we show that $\facttorel$ restricts to a functor $\ttWFS[\C] \to \IdPres[\C]$ and that $\reltofact \facttorel$ restricts to a functor $\ttWFS[\C] \to \ttWFS[\C]$ which is isomorpic to the identity functor under the proset truncation. In Section \ref{sec:idpresandmrs}, we show that a relation is a Moore relation system if and only if it is an $\Id$-presentation and that $\facttorel \reltofact : \IdPres[\C] \to \IdPres[\C]$ is isomorphic to the identity functor under proset truncation. 
We show in Section \ref{sec:idtypes} that having an $\Id$-presentation is equivalent to modeling $\Id$-types.
In Section \ref{sec:summary}, we put these results together to obtain Theorem \ref{mainthm}.

\section{Preliminaries}
\label{sec:relandfact}

This section is devoted to developing the concepts that we will study in the following sections. Our main theorem, \ref{mainthm}, is a comparison of a certain kind of relation (models of $\Id$-types) and a certain kind of factorization (weak factorization systems). We start with a categorical analysis of such relations and factorizations, and then we define the particular instances in which we are interested.

As mentioned above, we fix a finitely complete category $\C$ throughout this paper.
\subsection{Relations, relational factorizations, and factorizations}
 In this subsection, we define the fundamental objects: relations on $\C$, factorizations on $\C$, relational factorizations on $\C$, and functors between them.

\begin{defn}\label{reldiag}
\mbox{}
\begin{enumerate}[label=\alph*)]
\item 
Let $\reldiag$ denote the category generated by the graph
\[ \diagram
 \fullmoon   \ar[r]|-{\eta} &  \mathrm \Psi \ar@<1.5ex>[l]^-{\epsilon_1 } \ar@<-1.5ex>[l]_-{\epsilon_0 }
\enddiagram \]
and the equations $ \epsilon_0 \eta = \epsilon_1 \eta = 1_{ \fullmoon}$. A \emph{relation} on an object $X$ of a category $\C$ is a functor $R: \reldiag \to \C$ such that $R(\fullmoon) = X$.
\item Let $\relfactdiag $ denote the category generated by the graph
\[ \diagram
\fullmoon \ar[r]_-\lambda & \Phi \ar@/_2ex/[l]_\kappa \ar[r]_-\rho & \Sun
\enddiagram\]
and the equation $\kappa \lambda = 1_\fullmoon$. A \emph{relational factorization} of a morphism $f: X \to Y$ in a category $\C$ is a functor $P: \relfactdiag \to \C$ such that $P(\rho \lambda )= f$.
\item Let $\factdiag$ denote the category generated by the following graph and no equations.
\[ \diagram 
\fullmoon \ar[r]^\lambda & \Phi \ar[r]^\rho & \Sun
\enddiagram \]
A \emph{factorization} of a morphism $f: X \to Y$ in a category $\C$ is a functor $F: \factdiag \to \C$ such that $F(\rho \lambda) = f$. 

\end{enumerate}
\end{defn}

\begin{rem}
What we have defined above as a \emph{relation} could more descriptively be called an internal reflexive pseudo-relation. However, since all relations will be of this type, we will just call them relations.
\end{rem}

\begin{exmp} \label{hoeqex}
Consider an exponentiable object $I$ and morphisms $0,1: * \rightrightarrows I$ of $\C$ where $*$ is a terminal object. Let $!: I \to *$ denote the unique morphism to the terminal object.
\begin{enumerate}[label=\alph*)]
\item
On any object $X$ of $\C$, there is a relation whose image is the following diagram.
\[ \diagram
X  \ar[r]|-{X^!}&  X^I \ar@<1ex>[l]^-{X^1 } \ar@<-1ex>[l]_-{X^0 }
\enddiagram \]
\item
Consider a morphism $f: X \to Y$ in $\C$. Let $X \times_Y Y^I$ denote the pullback 
\[ \diagram
X \times_Y Y^I \pullback \ar[r] \ar[d] & Y^I \ar[d]^{Y^0}\\
X \ar[r]^f & Y
\enddiagram \]
 of $f:X \to Y$ and $Y^0: Y^I \to Y$. Then 
\[ \diagram
X \ar[rr]_-{1_X \times Y^! f} &&  \ar@/_2ex/[ll]_-{\pi_X} X \times_Y Y^I \ar[rr]^-{Y^1 \pi_{Y^I}} && Y
\enddiagram \]
is the image of a relational factorization of $f$, which we denote by $P: \reldiag \to \C$.
\item
We also obtain a factorization $F: \factdiag \to \C$ of $f$ whose image is depicted below.
\[ \diagram
X \ar[rr]^-{1_X \times Y^! f} &&   X \times_Y Y^I \ar[rr]^-{Y^1 \pi_{Y^I}} && Y
\enddiagram \]
\end{enumerate}
\end{exmp}

\begin{notn}
Let $\M$ denote the category generated by the graph $0 \to 1$. Then $\C^\M$ is the category of morphisms of $\C$. For a commutative square  in $\C$ as shown below, let $\langle \alpha, \beta \rangle : f \to g$ denote the morphism that this produces in $\C^\M$.
\[ \diagram
X \ar[r]^\alpha \ar[d]_f & W  \ar[d]^g \\
Y \ar[r]^\beta & Z
\enddiagram \]
\end{notn}

\begin{defn} \label{funcdmfs}
\mbox{}
\begin{enumerate}[label=\alph*)]
\item A \emph{functorial relation} on $\C$ is a section of the functor $\C^\fullmoon: \C^\reldiag \to \C$.  Let $\frel[\C]$ denote the category of such sections.
\item A \emph{functorial relational factorization} on $\C$ is a section of the functor $\C^{\rho\lambda}: \C^\relfactdiag \to \C^\M$.  Let $\frelfact[\C]$ denote the category of such sections.
\item A \emph{functorial factorization} on $\C$ is a section of the functor $\C^{\rho\lambda}: \C^\factdiag \to \C^\M$. Let $\ffact[\C]$ denote the category of such sections. 
\end{enumerate}
\end{defn}
\begin{exmp}\label{hoeqex2a}
Consider Example \ref{hoeqex}.
Since the relations, relational factorizations, and functorial factorizations given there are assembled from functors, these generate a functorial relation, a functorial relational factorization, and a functorial factorization on the category $\C$.
\end{exmp}

Now we take pains to consider variants of these concepts that are not functorial. This is because the $\Id$-types of Martin-L\"of type theory are not given functorially, and we aim to model these.

\begin{defn}
Let $\C$ and $\D$ be categories. An \emph{afunctor} $U: \C  \dashrightarrow \D$ consists of an object $U(X)$ of $\D$ for every object $X$ of $\C$ and a morphism $U(f): U(X) \to U(Y)$ for every morphism $f: X \to Y$ in $\C$.

Let $U,V: \C \dashrightarrow \D$ be afunctors.
An (unnatural) \emph{transformation} $\tau: U \dashrightarrow V$ consists of a morphism $\tau_X: U(X) \to V(X)$ in $\D$ for every $X$ in $\C$.

Categories and afunctors comprise a $1$-category $\aCat$ which contains the $1$-category $\Cat$ of categories and functors as a wide subcategory. We will use the fact that the unnatural transformations give every hom-set in $\aCat$ the structure of a category. Note, however, that this does not make $\aCat$ a $2$-category.
\end{defn}

\begin{defn}\label{afunctorialdefn}
\mbox{}
\begin{enumerate}[label=\alph*)]
\item A \emph{relation} on $\C$ is a section of the functor $\C^\fullmoon: \C^\reldiag \to \C$ in $\aCat$.  Let $\rel[\C]$ denote the category of such sections.
\item A \emph{relational factorization} on $\C$ is a section of the functor $\C^{\rho\lambda}: \C^\relfactdiag \to \C^\M$ in $\aCat$.  Let $\relfact[\C]$ denote the category of such sections.
\item A \emph{functorial factorization} on $\C$ is a section of the functor $\C^{\rho\lambda}: \C^\factdiag \to \C^\M$ in $\aCat$. Let $\fact[\C]$ denote the category of such sections. 
\end{enumerate}
\end{defn}

\begin{rem}
We are abusing terminology by speaking of relations, relational factorizations, and factorizations both on an object of a category and on the whole of a category.
\end{rem}

There are the following natural inclusions.
\[ \diagram
 \frel[\C]  \ar@{^{ (}->}[d] & \frelfact[\C] \ar@{^{ (}->}[d] &  \ffact[\C] \ar@{^{ (}->}[d]\\
  \rel[\C] &  \relfact[\C] & \fact[\C]
\enddiagram \]
We now describe the functors that fit horizontally into this diagram. 

There are functors 
\[\diagram
 \factdiag \ar[r]^{\iota} & \relfactdiag \ar[r]^\sigma & \reldiag
 \enddiagram \]
 where $\iota$ is the only injection $\factdiag \hookrightarrow \relfactdiag$ and $\sigma$ is the surjection sending $\kappa$ to $\epsilon_0$, $\rho$ to $\epsilon_1$, and $\eta$ to $\lambda$.
 
 The functor $\iota$ induces a functor $\iota_*: \relfact[\C]  \to \fact[\C]$ given by postcomposition with $\C^\iota: \C^\relfactdiag \to \C^\factdiag$. This restricts to a functor $\iota_*: \frelfact[\C]  \to \ffact[\C]$ making the right-hand square in the diagram below (\ref{diag:relrelfactfact}) commute. This functor takes a relational factorization on $\C$ to its underlying factorization on $\C$.

 \begin{equation} \label{diag:relrelfactfact} \diagram
 \frel[\C]  \ar@{^{ (}->}[d]  & \frelfact[\C] \ar@{^{ (}->}[d] \ar[r]^-{\iota_*} \ar[l]_{\sigma^*} &  \ffact[\C] \ar@{^{ (}->}[d]\\
   \rel[\C] &  \relfact[\C] \ar[r]^-{\iota_*}  \ar[l]_{\sigma^*} & \fact[\C]
\enddiagram 
\end{equation}

Let $!: \M \to *$ denote the unique morphism from the category $\M$ to the terminal category $*$. Since the following is a pullback diagram, pulling back along $\C^!$ produces a functor $\sigma^*: \relfact[\C] \to \rel[\C]$.
\[ \diagram 
\C^\reldiag  \ar[d]_{\C^\fullmoon} \pullback \ar@{^{ (}->}[r]^-{\C^\sigma}  & \C^\relfactdiag \ar[d]^{\C^{\rho\lambda}} \\
\C \ar@{^{ (}->}[r]^-{\C^!} & \C^\M \enddiagram \]
This restricts to a functor $\sigma^*: \frelfact[\C] \to \frel[\C]$ making the left-hand square in diagram (\ref{diag:relrelfactfact}) commute.

\begin{con}\label{con:mappingpath}
Consider an $R \in \rel[\C]$ and an $f:X \to Y$ in $\C$. We construct a relational factorization $\sigma_*(R)(f) \in \C^\relfactdiag$ of $f$. If we denote $R(Y)$ by the following diagram,
\[ \diagram
 Y   \ar[r]|-{\eta} &  {\Psi} Y \ar@<1.5ex>[l]^-{\epsilon_1 } \ar@<-1.5ex>[l]_-{\epsilon_0 }
\enddiagram \]
then we let $\sigma_*(R)(f)$ be the following diagram
\[ \diagram
X \ar[r]_-{1_X \times \eta f} & X \times_Y {\Psi}Y \ar@/_2ex/[l]_{\pi_X} \ar[r]_-{\epsilon_1 \pi_{{\Psi}Y}} & Y
\enddiagram\]
where $X \times_Y {\Psi}Y$ is the following pullback.
\[ \diagram 
X \times_Y {\Psi}Y \ar[r] \ar[d] \pullback & {\Psi}Y \ar[d]^{\epsilon_0} \\
X \ar[r]^f & Y
\enddiagram \]
\end{con}

\begin{lem}\label{lem:adj}
The construction above (\ref{con:mappingpath}) assembles into a functor $\sigma_*: \rel[\C] \to \relfact[\C]$ with functions 
\[ i: \hom_{\rel[\C]}(\sigma^* P, R) \leftrightarrows \hom_{\relfact[\C]}(P, \sigma_* R): j \]
for all $P \in \relfact[\C]$ and $R \in \rel[\C]$. The functor $\sigma_*$ restricts to a functor $\sigma_*: \frel[\C] \to \frelfact[\C]$, and the functions $i,j$ restrict to a bijection 
\[ i: \hom_{\frel[\C]}(\sigma^* P, R) \cong \hom_{\frelfact[\C]}(P, \sigma_* R): j \]
natural in $P$ and $R$, making $\sigma_*: \frel[\C] \to \frelfact[\C]$ right adjoint to $\sigma^*: \frelfact[\C] \to \frel[\C]$.
\end{lem}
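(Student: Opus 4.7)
The plan is to proceed in three stages: (i) assemble Construction \ref{con:mappingpath} into the asserted (a)functor $\sigma_*$; (ii) define the maps $i, j$ on hom-sets; and (iii) verify that they are mutually inverse and natural in the functorial setting.

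For (i), at each $f : X \to Y$ the two defining equations $\kappa \lambda = 1_X$ and $\rho \lambda = f$ of a relational factorization follow immediately from the formulas $\kappa = \pi_X$, $\lambda = \langle 1_X, \eta f \rangle$ and the relation equation $\epsilon_1 \eta = 1_Y$. Given an (unnatural) transformation $\alpha : R \dashrightarrow R'$ in $\rel[\C]$, I define $\sigma_*(\alpha)_f$ to act as the identity on $X$ and $Y$ and as the map $1_X \times \alpha_Y^\Psi : X \times_Y \Psi Y \to X \times_Y \Psi' Y$ on the middle object; this is well-defined since $\alpha_Y$ commutes with $\epsilon_0$. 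Compatibility with $\lambda, \kappa, \rho$ reduces to compatibility of $\alpha_Y$ with $\eta, \epsilon_0, \epsilon_1$ respectively. When the inputs are functorial, naturality of $\sigma_*(\alpha)$ is automatic from the pullback universal property, and $\sigma_*$ preserves composition because $1_X \times (-)$ does.

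For (ii), given $\alpha : \sigma^* P \dashrightarrow R$ with components $\alpha_Y^\Psi : \Phi_{P(1_Y)} \to \Psi Y$, I define $i(\alpha)_f$ at $f : X \to Y$ to have identity $\fullmoon$- and $\Sun$-components and middle morphism $\langle \kappa_{P(f)},\, \alpha_Y^\Psi \circ P(\langle f, 1_Y \rangle)_\Phi \rangle : \Phi_{P(f)} \to X \times_Y \Psi Y$, where $\langle f, 1_Y \rangle : f \to 1_Y$ is the evident morphism of $\C^\M$. The pullback pair is compatible because $\kappa_{P(1_Y)} \circ P(\langle f, 1_Y \rangle)_\Phi = f \circ \kappa_{P(f)}$ (since $P(\langle f, 1_Y \rangle)_\fullmoon = f$) and $\epsilon_0 \circ \alpha_Y^\Psi = \kappa_{P(1_Y)}$; the three commutation conditions required to be a morphism in $\C^\relfactdiag$ all follow from those of $\alpha_Y$. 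Conversely, I set $j(\beta)_X^\Psi := \pi_{\Psi X} \circ \beta_{1_X}^\Phi$, using the canonical isomorphism $X \times_X \Psi X \cong \Psi X$.

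The main obstacle, and the only place where functoriality is essential, is showing $i \circ j = \mathrm{id}$ in the functorial setting. Here one uses naturality of $\beta$ at the morphism $\langle f, 1_Y \rangle$ of $\C^\M$, together with the explicit form of $\sigma_*(R)(\langle f, 1_Y \rangle)$, whose middle component is the projection $\pi_{\Psi Y} : X \times_Y \Psi Y \to \Psi Y$, to obtain $\pi_{\Psi Y} \circ \beta_{1_Y}^\Phi \circ P(\langle f, 1_Y \rangle)_\Phi = \pi_{\Psi Y} \circ \beta_f^\Phi$. Combined with $\pi_X \circ \beta_f^\Phi = \kappa_{P(f)}$ and the pullback universal property, this recovers $\beta_f^\Phi = i(j(\beta))_f^\Phi$. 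The reverse identity $j \circ i = \mathrm{id}$ needs no naturality and reduces to $P(\langle 1_X, 1_X \rangle) = \mathrm{id}_{P(1_X)}$. Naturality of the bijection in $P$ and $R$ is then a routine diagram chase.
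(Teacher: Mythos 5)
Your proposal is correct and follows essentially the same route as the paper: the same formulas for $i$ (middle component $\langle \kappa_{P(f)},\, a_Y \circ P(\langle f,1_Y\rangle)_\Phi\rangle$) and $j$ (middle component $\pi_{\Psi X}\circ b_{1_X}$), with $j\circ i=\mathrm{id}$ reducing to $P$ preserving the identity $\langle 1_X,1_X\rangle$ and $i\circ j=\mathrm{id}$ using naturality of $\beta$ at $\langle f,1_Y\rangle$, exactly as in the paper's argument.
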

\begin{rem}
The universal property of $\sigma_*$ can be interpreted as saying that $\sigma_* R$ is the right Kan extension of $\C^\sigma R: \C \to \C^\relfactdiag$ along $\C^!: \C \to \C^\M$ in $\Cat/ \C^\M$ \cite[Thm.~3.1.44]{Nor17}.
\end{rem}
\begin{proof}[Proof of Lemma~\ref{lem:adj}]
The functoriality of $\sigma^*$ is straightforward to check. We construct $i$ and $j$ and check that, when restricted to functorial relations and relational factorizations, they form a natural bijection.

Consider a $P \in \relfact[\C]$ which takes an $f:X \to Y $ in $\C$ to the diagram on the left below, and a $R \in \rel[\C]$ which takes $X \in \C$ to the diagram on the right below.
\[ \diagram
X \ar[r]_-{\lambda_f} & \Phi f \ar@/_2ex/[l]_{\kappa_f} \ar[r]_-{\rho_f} & Y
\enddiagram \ \ \ \ \ 
 \diagram
 X   \ar[r]|-{\eta_X} &  \Psi X \ar@<1.5ex>[l]^-{\epsilon_{1X} } \ar@<-1.5ex>[l]_-{\epsilon_{0X} }
\enddiagram 
\]

First, we construct a function $i: \hom_{\rel[\C]}(\sigma^* P, R) \to \hom_{\relfact[\C]}(P, \sigma_* R)$.
An element $\alpha \in \hom_{\rel[\C]}(\sigma^* P, R)$ has at each $X \in \C$, a component of the form shown on the left below.
\[
 \diagram
  X   \ar[r]|-{\lambda_{1_X}} \ar@{=}[d]&  \Phi 1_X \ar@<1.5ex>[l]^-{\rho_{1_X}} \ar@<-1.5ex>[l]_-{\kappa_{1_X} } \ar[d]^{a_X} & & X \ar@{=}[d] \ar[r]_-{\lambda_f} & \Phi f \ar@/_2ex/[l]_{\kappa_f} \ar[r]_-{\rho_f} \ar[d]|{\kappa_f \times a_Y \Phi \langle f,1_Y \rangle } & Y \ar@{=}[d]
  \\ 
 X   \ar[r]|-{\eta_X} &  \Psi X \ar@<1.5ex>[l]^-{\epsilon_{1X} } \ar@<-1.5ex>[l]_-{\epsilon_{0X} } & & X \ar[r]_-{1_X \times \eta_Y f} & X \times_Y \Psi Y \ar@/_2ex/[l]_{\pi_X} \ar[r]_-{\epsilon_1 \pi_{\Psi Y}} & Y
\enddiagram 
\]
Let $i\alpha \in \hom_{\relfact[\C]}(P, \sigma_* R)$ be the transformation with the component at each $f:X \to Y$ in $\C$ shown on the right above.

Now, we construct a function $j: \hom_{\relfact[\C]}(P, \sigma_* R) \to \hom_{\rel[\C]}(\sigma^* P, R)$. An element $\beta: \hom_{\relfact[\C]}(P, \sigma_* R) $ has at each $f: X \to Y$ in $\C$, a component of the form shown on the left below.
\[ \diagram
X \ar@{=}[d] \ar[r]_-{\lambda_f} & \Phi f \ar@/_2ex/[l]_{\kappa_f} \ar[r]_-{\rho_f} \ar[d]^{b_f } & Y \ar@{=}[d] & &   X   \ar[r]|-{\lambda_{1_X}} \ar@{=}[d]&  \Phi 1_X \ar@<1.5ex>[l]^-{\rho_{1_X}} \ar@<-1.5ex>[l]_-{\kappa_{1_X} } \ar[d]^{\pi_{\Psi X}b_{1_X}}
\\
X \ar[r]_-{1_X \times \eta_Y f} & X \times_Y \Psi Y \ar@/_2ex/[l]_{\pi_X} \ar[r]_-{\epsilon_1 \pi_{\Psi Y}} & Y & &  X   \ar[r]|-{\eta_X} &  \Psi X \ar@<1.5ex>[l]^-{\epsilon_{1X} } \ar@<-1.5ex>[l]_-{\epsilon_{0X} }
\enddiagram \tag{$*$}\]
Let $j \beta \in  \hom_{\rel[\C]}(\sigma^* P, R)$ be the transformation with the component at each $X \in \C$ shown on the right above.

Now suppose that $P$ is in $\frelfact[\C]$. To show that $j i \alpha = \alpha$, we show that the only nontrivial component of $\alpha_X$ for each $X \in \C$ is $a_X$. We calculate:
\begin{align*}
\pi_{\Psi X}(\kappa_{1_X} \times a_X \Phi \langle 1_X, 1_X \rangle) &= a_X \Phi \langle 1_X, 1_X \rangle\\
&= a_X 1_{\Phi 1_X} \\
&= a_X.
\end{align*}
Now to show that $ij \beta = \beta$, we calculate
\begin{align*}
\kappa_f \times \pi_{\Psi Y} {b_{1_Y}} \Phi \langle f,1_Y \rangle &=  \kappa_f \times  \pi_{\Psi Y} b_f \\
&=  b_f.
\end{align*}
Thus, $j = i^{-1}$.

Now we show that $j$ is natural in $P$ and $R$. Consider natural transformations $p: P' \to P$ and $r: R \to R'$. We want to show that the following diagram commutes.
\[ \diagram
 \hom_{\rel[\C]}(\sigma^* P, R) \ar[d]^{  r \circ - \circ \sigma^*p}& \hom_{\relfact[\C]}(P, \sigma_*R) \ar[l]_{j} \ar[d]^{\sigma_*r \circ - \circ p} \\ 
 \hom_{\rel[\C]}(\sigma^* P', R') & \hom_{\relfact[\C]}(P', \sigma_*R')\ar[l]_{j}
\enddiagram \] 
Denote the component of $p: P' \to P$ at a morphism $f: X \to Y$ in $\C$ by the diagram below on the left, and denote the component of $r: R \to R'$ at $X \in \C$ by the diagram below on the right.
\[ \diagram
X \ar@{=}[d] \ar[r]_-{\lambda'_f} & \Phi' f \ar@/_2ex/[l]_{\kappa'_f} \ar[r]_-{\rho'_f} \ar[d]|-{p_f} & Y \ar@{=}[d] & &   X   \ar[r]|-{\eta_{X}} \ar@{=}[d]&  \Psi X \ar@<1.5ex>[l]^-{\epsilon_{1X}} \ar@<-1.5ex>[l]_-{\epsilon_{0X} } \ar[d]^{r_X}
\\
X \ar[r]_-{\lambda_f} & \Phi f \ar@/_2ex/[l]_{\kappa_f} \ar[r]_-{\rho_f} & Y &  & X   \ar[r]|-{\eta'_X} &  \Psi' X \ar@<1.5ex>[l]^-{\epsilon'_{1X} } \ar@<-1.5ex>[l]_-{\epsilon'_{0X} }
\enddiagram 
 \]
For any $\beta \in \hom_{\relfact[\C]}(P, \sigma_*R)$ (with components as depicted in left-hand diagram of ($*$) above), $r \circ j\beta \circ \sigma^*p$ and $j(\sigma_*r \circ \beta \circ p)$ both have the following component at an object $X \in \C$.
\[
 \diagram
  X   \ar[r]|-{\lambda'_{1_X}} \ar@{=}[d]&  \Phi' 1_X \ar@<1.5ex>[l]^-{\rho'_{1_X}} \ar@<-1.5ex>[l]_-{\kappa'_{1_X} } \ar[d]^{r_Xb_{1_X}p_{1_x}}\\
 X   \ar[r]|-{\eta'_X} &  \Psi' X \ar@<1.5ex>[l]^-{\epsilon'_{1X} } \ar@<-1.5ex>[l]_-{\epsilon'_{0X} }
\enddiagram 
\]
Thus, $j$ is natural in $P$ and $R$.
\end{proof}

\begin{exmp}
Consider Example \ref{hoeqex2a}, and let $R$ denote the relation discussed there. Then $\sigma_* R$ is the relational factorization of that example.
\end{exmp}

\begin{con} \label{con:fact2relfact}
Consider an $F \in \fact[\C]$ and an $f: X \to Y$ in $\C$. We construct a factorization $\iota^*(F)(f) \in \C^\factdiag$ of $f$. Consider the morphism $1_X \times f: X \to X \times Y$. Denote $F(1_X \times f)$ by the following diagram.
\[ \diagram 
X \ar[r]^-{\lambda} &  \Phi(1_X \times f) \ar[r]^-\rho & X \times Y
\enddiagram \]
Then let $\iota^*(F)(f)$ be the following diagram.
\[ \diagram
X \ar[r]_-{\lambda} & \Phi(1_X \times f) \ar@/_2ex/[l]_-{\pi_X \rho} \ar[r]_-{\pi_Y \rho} & Y
\enddiagram\]
\end{con}
\begin{lem}
The construction above (\ref{con:fact2relfact}) assembles into a functor $\iota^*: \fact[\C] \to \relfact[\C]$ which restricts to a functor $\iota^*: \ffact[\C] \to \frelfact[\C]$.
\end{lem}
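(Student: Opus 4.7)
The plan is to break Construction~\ref{con:fact2relfact} into four routine verifications: (i) each $\iota^*(F)(f)$ is a relational factorization of $f$; (ii) the construction extends to an afunctor on morphisms of $\C^\M$, so that $\iota^*(F)\in\relfact[\C]$; (iii) $\iota^*$ is itself functorial on $\fact[\C]$; and (iv) functoriality transfers from $F\in\ffact[\C]$ to $\iota^*(F)\in\frelfact[\C]$.

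For~(i), I will use that $F$ is a section of $\C^{\rho\lambda}$, so that the composite of the two legs of $F(1_X\times f)$ is exactly $1_X\times f: X\to X\times Y$. Projecting gives $\pi_Y\rho\lambda = f$, exhibiting the diagram as a factorization of $f$, and $\pi_X\rho\lambda = 1_X$, which supplies the retraction $\kappa := \pi_X\rho$ satisfying $\kappa\lambda = 1_\fullmoon$ and thus identifies Construction~\ref{con:fact2relfact} with a functor on $\relfactdiag$.

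For~(ii), a morphism $\langle\alpha,\beta\rangle : f\to g$ in $\C^\M$ lifts canonically to a morphism $\langle\alpha,\alpha\times\beta\rangle : (1_X\times f)\to (1_W\times g)$ in $\C^\M$, commutativity of the resulting square being exactly the relation $g\alpha = \beta f$ translated through the universal property of the product. Applying $F$ and extracting the middle entry produces the middle component of $\iota^*(F)(\langle\alpha,\beta\rangle)$, with $\alpha$ and $\beta$ as outer components. The three squares in $\C^\relfactdiag$ (at $\lambda$, $\rho$, and $\kappa$) then commute: the $\lambda$-square is the $\lambda$-square of $F$, while the $\rho$- and $\kappa$-squares arise from postcomposing the $\rho$-square of $F$ with the projections $\pi_Z$ and $\pi_W$, using $\pi_Z(\alpha\times\beta) = \beta\pi_Y$ and $\pi_W(\alpha\times\beta) = \alpha\pi_X$.

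For~(iii), a transformation $\tau: F\to F'$ in $\fact[\C]$ has components in $\C^\factdiag$ over the identity of $\C^\M$; I will take the middle entry of $\tau_{1_X\times f}$ as the middle entry of $\iota^*(\tau)_f$, with the outer entries being identities. Commutativity of the resulting squares follows directly from the $\tau$-components, and composition is preserved because transformations in both $\fact[\C]$ and $\relfact[\C]$ compose pointwise in $\C$. For~(iv), the assignment $f\mapsto(1_X\times f)$ is a genuine functor $\C^\M\to\C^\M$ because $(\alpha'\alpha)\times(\beta'\beta) = (\alpha'\times\beta')(\alpha\times\beta)$ and identities are preserved; hence when $F$ is functorial, its composite with this assignment is a functor $\C^\M\to\C^\factdiag$, and postcomposing with the natural projections yields the functor $\iota^*(F):\C^\M\to\C^\relfactdiag$, i.e.~an element of $\frelfact[\C]$. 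The only nontrivial step is the square-chasing in~(ii); the rest is a formal consequence of the product's universal property.
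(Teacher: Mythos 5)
The paper states this lemma without proof (the verification is left as routine, with details deferred to the author's thesis), so there is no in-text argument to compare against; your job here is simply to supply the omitted check, and your four-step verification does so correctly. Step (i) correctly uses the section property $\rho\lambda = 1_X\times f$ to get both $\pi_Y\rho\lambda = f$ and the retraction $\pi_X\rho\lambda = 1_X$; step (ii) correctly identifies the canonical lift $\langle\alpha,\alpha\times\beta\rangle$ (whose square commutes precisely because $g\alpha=\beta f$) and derives the $\rho$- and $\kappa$-squares by projecting; and steps (iii)--(iv) are the right observations about pointwise composition of transformations and about $f\mapsto 1_X\times f$ being a genuine endofunctor of $\C^\M$. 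The only phrase I would tighten is ``postcomposing with the natural projections'': the passage from $F(1_X\times f)$ to $\iota^*(F)(f)$ is not literally postcomposition with a single fixed functor $\C^\factdiag\to\C^\relfactdiag$, since the projections $\pi_X,\pi_Y$ depend on $f$; but the functoriality of $\iota^*(F)$ still follows immediately from the computation you give in (ii) together with the functoriality of $F\circ(1\times -)$, so this is a matter of wording rather than a gap.
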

\begin{rem}
Though $\iota^*$ is not adjoint to $\iota_*$, $\iota_* \iota^*$ is a comonad on $\ffact[\C]$ \cite[Cor.~3.1.35]{Nor17}.
\end{rem}

Now we have described functors 
 \begin{equation} 
 \diagram
 \frel[\C]  \ar@{^{ (}->}[d]  \ar@<0.5ex>[r]^-{\sigma_*} & \frelfact[\C] \ar@{^{ (}->}[d] \ar@<0.5ex>[r]^-{\iota_*} \ar@<0.5ex>[l]^-{\sigma^*} &  \ffact[\C] \ar@{^{ (}->}[d] \ar@<0.5ex>[l]^-{\iota^*}\\
   \rel[\C] \ar@<0.5ex>[r]^-{\sigma_*} &  \relfact[\C]     \ar@<0.5ex>[r]^-{\iota_*} \ar@<0.5ex>[l]^-{\sigma^*} & \fact[\C]  \ar@<0.5ex>[l]^-{\iota^*}
\enddiagram 
\end{equation}
We are most interested in the functors between $\rel[\C]$ and $\fact[\C]$ (and between $\frel[\C]$ and $\ffact[\C]$). We abbreviate these as follows.
\begin{notn}\label{notn:functors}
Let $\reltofact$ denote $\iota_* \sigma _*$, and let $\facttorel$ denote $\sigma^* \iota^*$.\end{notn}

\subsection{Weak factorization structures and systems}
\label{sec:fibobj}
In this section, we discuss some fundamentals of weak factorization systems and the perspective on them that we take. Consider a factorization $F$ on $\C$. There are two functors $\lambda, \rho: \M \to \factdiag$ which take the non-identity morphism of $\M$ to $\lambda$ and $\rho$, respectively. Using these, we obtain afunctors $\lambda_F, \rho_F: \C^\M \to \C^\M$ from $F$. Then $\lambda_F$ is copointed and $\rho_F$ is pointed in the sense that for every $f: X\to Y$ of $\C$, there are the following morphisms in $\C^\M$.
\[ \diagram
 X \ar[d]_{\lambda_F f} \ar@{=}[r] & X \ar[d]^{f \ } && X \ar[d]_{  \ f} \ar[r]^-{\lambda_F f } & \ar[d]^{\rho_F f} \Phi_F f \\
 \Phi_Ff \ar[r]_-{\rho_F f}& Y && Y \ar@{=} [r]& Y 
\enddiagram \]
Let $\coalg{F}$ denote 
\[\{ f:X \to Y \text{ in } \C \ | \ \exists s : Y \to \Phi_F(f):  sf = \lambda_F(f) , \rho_F(f) s = 1_Y \},\]
 the class of coalgebras of the copointed endo-afunctor $(\lambda_F, \langle 1, \rho_F \rangle)$. This is the class of morphisms $f$ for which there is a lift in the square shown on the right above. 
Let $\alg{F}$ denote 
\[\{f:X \to Y \text{ in }  \C \ | \ \exists s :\Phi_F(f) \to X : s \lambda_F(f) = 1_X, fs = \rho_F f \},\]
 the class of algebras of the pointed endo-afunctor $( \rho_F, \langle \lambda_F, 1 \rangle)$. This is, dually, the class of morphisms $f$ for which there is a lift in the square above on the left. We will say that a morphism in $\coalg{F}$ has an $F$-coalgebra structure, and that a morphism in $\alg{F}$ has an $F$-algebra structure to simplify vocabulary. 

\begin{notn}
For any morphisms $\ell, r$ in $\C$, write $\ell \boxslash r$ if $\ell$ has the left lifting property against $r$.
For two collections $\mathsf L, \mathsf R$ of morphisms of $\C$, write $\mathsf L \boxslash \mathsf R$ if every morphism of $\mathsf L$ has the left-lifting property against every morphism of $\mathsf R$. Write $\mathsf L^\boxslash $ (and dually, ${^\boxslash \mathsf R}$) for the class of morphisms with the right lifting property against $\mathsf L$ (dually, the left lifting property against $\mathsf R$).
\end{notn}

\begin{prop}\label{coalgalglift} 
	Consider a factorization $F$ on $\C$. Then $\coalg{F} \boxslash \alg{F}$.
\end{prop}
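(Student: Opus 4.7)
The plan is a direct construction using the algebra and coalgebra structure maps together with the afunctorial action of $F$ on squares; this is essentially the standard trick for extracting a lift from a factorization. Suppose I am given $\ell \in \coalg{F}$ with coalgebra structure $s_\ell : Y \to \Phi_F \ell$ satisfying $s_\ell \ell = \lambda_F \ell$ and $\rho_F \ell \circ s_\ell = 1_Y$, a morphism $r \in \alg{F}$ with algebra structure $s_r : \Phi_F r \to W$ satisfying $s_r \circ \lambda_F r = 1_W$ and $r \circ s_r = \rho_F r$, and a commutative square $\langle \alpha, \beta \rangle : \ell \to r$. I would first apply the afunctor $F$ to $\langle \alpha, \beta \rangle \in \C^\M$ to obtain a morphism $F \langle \alpha, \beta \rangle$ in $\C^\factdiag$. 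Because $F$ is a section of $\C^{\rho\lambda}$, the outer components of this morphism are $\alpha$ and $\beta$, leaving a middle component $\Phi_F \langle \alpha, \beta \rangle : \Phi_F \ell \to \Phi_F r$ which fits into the two evident naturality squares with $\lambda_F$ and $\rho_F$.

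The candidate lift is then the composite $h := s_r \circ \Phi_F \langle \alpha, \beta \rangle \circ s_\ell : Y \to W$. The two required identities $h\ell = \alpha$ and $rh = \beta$ will follow from short diagram chases: the first uses $s_\ell \ell = \lambda_F \ell$, the naturality identity $\Phi_F \langle \alpha, \beta \rangle \circ \lambda_F \ell = \lambda_F r \circ \alpha$, and $s_r \circ \lambda_F r = 1_W$; the second is dual, invoking $r \circ s_r = \rho_F r$, the naturality identity $\rho_F r \circ \Phi_F \langle \alpha, \beta \rangle = \beta \circ \rho_F \ell$, and $\rho_F \ell \circ s_\ell = 1_Y$.

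There is no substantive obstacle here: the argument uses only the action of $F$ on a single arrow of $\C^\M$, never any composition- or identity-preservation, so the fact that $F$ is merely an afunctor rather than a genuine functor is immaterial. This is also why the proposition can be stated at the level of $\fact[\C]$ rather than only for its functorial subcategory $\ffact[\C]$.
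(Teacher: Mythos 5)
Your proof is correct and is exactly the standard Grandis--Tholen argument that the paper defers to by citing Corollary 2.7 of \cite{GT06} rather than reproducing: apply $F$ to the square to get the middle map $\Phi_F\langle\alpha,\beta\rangle$, then compose with the coalgebra and algebra structures to build the lift. Your closing observation---that only the action of $F$ on a single arrow is used, so afunctoriality suffices---is precisely the content of the paper's remark that the argument of \cite{GT06} ``works here without modification'' despite their restriction to functorial factorizations.
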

\begin{proof}
This appears in Corollary 2.7 of \cite{GT06}. Though they consider only functorial factorizations, their argument works here without modification.
\end{proof}
\begin{defn}
A \emph{weak factorization structure} on $\C$ is a factorization $F$ on $\C$ such that for every morphism $f$ of $\C$, $\lambda_F(f) \in \coalg{F}$ and $\rho_F(f) \in \alg{F}$. An \emph{algebraic weak factorization structure} on $\C$ is a functorial factorization $F$ on $\C$ such that the copointed endofunctor $\lambda_F$ underlies a comonad on $\C^\M$ and the pointed endofunctor $\rho_F$ underlies a monad on $\C^\M$. 

Let $\WFS[\C]$ denote the full subcategory of $\fact[\C]$ spanned by those objects which are weak factorization structures.
\end{defn}
\begin{notn}
For any category $\D$, let $|\D|$ denote the preordered truncation: the preorder (viewed as a category) which has the same objects as $\D$ and a morphism $X \to Y$ when there is a morphism $X \to Y$ in $\D$.

For any object $X$ of $\D$, we will let $[X]$ denote the isomorphism class of $X$ in $|\D|$, and we will say that two objects $X$ and $Y$ of $\D$ are \emph{equivalent} if they are isomorphic in $|\D|$.
\end{notn}
\begin{prop}\label{equivwfs}
The isomorphism classes of $|\WFS[\C]|$ are the weak factorization systems on $\C$.
\end{prop}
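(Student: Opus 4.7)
The plan is to exhibit a bijection between the isomorphism classes of $|\WFS[\C]|$ and the weak factorization systems on $\C$ via the assignment $\Theta(F) = (\mathsf L_F, \mathsf R_F) := ({^\boxslash \alg{F}}, \coalg{F}^\boxslash)$. I would first verify that $\Theta(F)$ is a wfs. The factorization axiom holds because every $f$ factors as $\rho_F(f) \circ \lambda_F(f)$ with $\lambda_F(f) \in \coalg{F} \subseteq \mathsf L_F$ and $\rho_F(f) \in \alg{F} \subseteq \mathsf R_F$ by the definition of weak factorization structure. The lifting relation $\mathsf L_F \boxslash \mathsf R_F$ is immediate from Proposition~\ref{coalgalglift} together with the definitions of $^\boxslash$ and $\boxslash$. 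For the closure conditions $\mathsf R_F = \mathsf L_F^\boxslash$ and $\mathsf L_F = {^\boxslash \mathsf R_F}$, I would run the standard retract argument: for $g \in \coalg{F}^\boxslash$, a filler in the square with left leg $\lambda_F(g) \in \coalg{F}$ and right leg $g$ exhibits $g$ as a retract of $\rho_F(g) \in \alg{F}$, so $g$ inherits the right-lifting property against all of $\mathsf L_F$; dually on the other side.

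Next I would show that $\Theta$ descends to $|\WFS[\C]|$ and is injective on isomorphism classes. Since $\fact[\C]$ consists of sections in $\aCat$, a morphism $F \to F'$ is simply a family of maps $\phi_f \colon \Phi_F f \to \Phi_{F'} f$ with $\phi_f \lambda_F(f) = \lambda_{F'}(f)$ and $\rho_{F'}(f) \phi_f = \rho_F(f)$; transporting sections through $\phi_f$ yields $\alg{F'} \subseteq \alg{F}$ and $\coalg{F} \subseteq \coalg{F'}$. Thus an isomorphism in $|\WFS[\C]|$, which provides morphisms in both directions, forces $\alg{F} = \alg{F'}$ and $\coalg{F} = \coalg{F'}$, and so $\Theta(F) = \Theta(F')$. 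For injectivity, if $\Theta(F) = \Theta(F') = (\mathsf L, \mathsf R)$, then the commutative comparison square with top $\lambda_{F'}(f)$, left $\lambda_F(f)$, right $\rho_{F'}(f)$, and bottom $\rho_F(f)$ admits a filler $\phi_f$ because $\lambda_F(f) \in \mathsf L$ and $\rho_{F'}(f) \in \mathsf R$; these fillers, unnaturally chosen, assemble into a morphism $F \to F'$ in $\WFS[\C]$, and a symmetric choice gives $F' \to F$. No coherence is needed since we are in $\aCat$.

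Finally, for surjectivity, given a wfs $(\mathsf L, \mathsf R)$ I would choose for each $f$ a factorization $f = r_f \ell_f$ with $\ell_f \in \mathsf L$ and $r_f \in \mathsf R$, obtaining an object $F$ of $\fact[\C]$. To see $F \in \WFS[\C]$, I would further factor $\lambda_F(f)$ as $\rho_F(\lambda_F f)\,\lambda_F(\lambda_F f)$ and invoke $\lambda_F(f) \in \mathsf L \boxslash \mathsf R \ni \rho_F(\lambda_F f)$ to extract a coalgebra section for $\lambda_F(f)$; dually for $\rho_F(f)$. Then $\alg{F} = \mathsf R$: any $g \in \alg{F}$ is a retract of $\rho_F(g) \in \mathsf R$ and hence lies in $\mathsf R$ by retract-closure, while any $r \in \mathsf R$ picks up an algebra structure from lifting $\lambda_F(r) \in \mathsf L$ against $r$. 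Similarly $\coalg{F} = \mathsf L$, so $\Theta(F) = (\mathsf L, \mathsf R)$. The main technical point throughout is the retract-closure identification $\coalg{F}^\boxslash$ equals the retract closure of $\alg{F}$ (and its dual), tying the raw algebra/coalgebra classes of the structure $F$ to the retract-closed left/right classes of the system; once that is in hand, working in $\aCat$ actually simplifies matters by obviating any naturality check on the comparison maps $\phi_f$.
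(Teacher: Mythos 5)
Your argument follows the same overall architecture as the paper's proof: a map from weak factorization structures to weak factorization systems whose fibers are the isomorphism classes of $|\WFS[\C]|$, plus surjectivity by choosing factorizations. The difference is only that the paper cites \cite{RT02} (Theorem 2.4 and Proposition 5.1) for exactly the three facts you prove by hand --- that $(\coalg{F},\alg{F})$ is a weak factorization system, that two structures determine the same system if and only if there are morphisms in both directions, and that a chosen factorization of a given system recovers its classes --- so your version is a self-contained unpacking of those citations. The individual arguments (the retract-closure identification, transporting (co)algebra structures along the components $\phi_f$, and building $\phi_f$ by lifting $\lambda_F(f)$ against $\rho_{F'}(f)$, with no naturality to check in $\aCat$) are all sound.

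One step is genuinely missing, though it is easy to repair. In the surjectivity paragraph you claim that choosing a factorization $f = r_f\ell_f$ for each morphism $f$ already yields an object of $\fact[\C]$. It does not: an object of $\fact[\C]$ is a section of $\C^{\rho\lambda}$ in $\aCat$, hence an afunctor on $\C^\M$, so it must also assign to every square $\langle \alpha,\beta\rangle\colon f \to g$ a map $Mf \to Mg$ commuting with the chosen factorizations. The paper supplies this datum by solving the lifting problem with left leg $\lambda(f)\in\mathsf L$ and right leg $\rho(g)\in\mathsf R$ --- precisely the device you already use to build the comparison maps $\phi_f$ in your injectivity argument --- so the fix is one more application of the same lift, and no coherence is required since afunctors need not preserve composition.
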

\begin{proof}
We show that there is a function $q$ from the objects of $\WFS[\C]$ to the weak factorization systems on $\C$ which is surjective and whose fibers are the isomorphism classes of $|\WFS[\C]|$.

By Theorem 2.4(2) of \cite{RT02}, for any $F \in \WFS[\C]$, $(\coalg{F}, \alg{F})$ is a weak factorization system on $\C$ with factorization given by $F$. Let $q$ denote the function which maps a weak factorization structure $F$ to the weak factorization system $(\coalg{F}, \alg{F})$.

By Proposition 5.1 of \cite{RT02}, for any $F, G \in \WFS[\C]$, we have that \[(\coalg{F} , \alg{F}) = (\coalg{G} , \alg{G})\] if and only if there are morphisms $F \leftrightarrows G$. Therefore, the fibers of $q$ are the isomorphism classes of $|\WFS[\C]|$.

Consider a weak factorization system $(\mathsf L, \mathsf R)$. There exists a factorization of each morphism $f: X \to Y$ in $\C$ which we can denote by the diagram on the left below.
\[ \diagram
X \ar[r]^-{\lambda(f)} & Mf \ar[r]^-{\rho(f)} & Y
\enddiagram \hspace{40pt}
\diagram
X \ar[d]_-{\lambda(f)} \ar[r]^-{\lambda(g) \alpha }& Mg \ar[d]^{\rho(g)} \\
Mf \ar[r]_-{\beta \rho(f)} \ar@{-->}[ur]& Z \enddiagram\]
For each morphism $g: W \to Z$ and $\langle \alpha, \beta \rangle: f \to g$, we can obtain a morphism $M\langle \alpha, \beta \rangle: Mf \to Mg$ by considering the lifting diagram above on the right. This assembles into a factorization, say $F$, on $\C$. By Theorem 2.4(1) of \cite{RT02}, $\mathsf L = \coalg{F}$ and $\mathsf R = \alg{F}$. Thus, $q$ is surjective.

(Again, though only functorial factorizations are considered in \cite{RT02}, their proofs of these results work here without modification.)
\end{proof}
\subsection{Display map categories}\label{sec:dmc}
Now we define what we consider in this paper to be a categorical interpretation of dependent type theory.
\begin{defn}
A class $\D$ of morphisms of $\C$ forms a \emph{display map category} $(\C, \D)$ when the following hold:
\begin{enumerate}
\item $\D$ contains every isomorphism;
\item $\D$ contains every morphism whose codomain is a terminal object; and
\item $\D$ is stable under pullback.
\end{enumerate}
We call the elements of $\D$ \emph{display maps}.
\end{defn}
The notion of {display map category} is closely related to others in the literature \cite{Tay99,Shu15,Joy17}. There is a careful comparison of this notion and of the types described below with others in the literature in \cite{Nor19}. 
\begin{defn}
Let $(\mathsf L, \mathsf R)$ be a weak factorization system on $\C$. We say that an object $X$ of $\C$ is \emph{fibrant} if every morphism from $X$ to a terminal object is in $\mathsf R$.
\end{defn}

\begin{exmp} Let $(\mathsf L, \mathsf R)$ be a weak factorization system on $\C$ in which all objects are fibrant. Since right classes of weak factorization systems always contain all isomorphisms and are stable under pullbacks \cite[Prop.~14.1.8]{MP12}, $(\C, \mathsf R)$ is a display map category.
\end{exmp}

\begin{defn}\label{sigmatypes}
A {display map category $(\C, \D)$} \emph{models $\Sigma$-types} if $\D$ is closed under composition.  We call a composition $gf$ of display maps a \emph{$\Sigma$-type} and sometimes denote it by $\Sigma_g f$.
\end{defn}

\begin{exmp} \label{wfssigma} Let $(\mathsf L, \mathsf R)$ be a weak factorization system on $\C$ in which all objects are fibrant. Since right classes of weak factorization systems are always closed under composition \cite[Prop.~14.1.8]{MP12}, $(\C, \mathsf R)$ models $\Sigma$-types.
\end{exmp}

Weak factorization systems are not only examples of display map categories but are also induced by ones which model $\Id$-types. 

\begin{defn} \label{Idtypes}
Consider a {display map category} ($\C, \D$) which models $\Sigma$-types, and consider a display map $f:X \to Y$. An \emph{identity type of $f$} is a relation on $f$ in the slice $\C / Y$
\begin{equation} \diagram
X \ar[dr]_f \ar[r]^-{r_f} & \Id(f) \ar[d]^{\iota_f} \ar[r]^-{\epsilon_f} & X \times_Y X \ar[dl]^{f \times f} \\
& Y 
\enddiagram \label{eq:idfact}
\end{equation}
such that 
 \begin{enumerate}
 \item $\epsilon_f$ is in $ \D$ and
 \item for every morphism $\alpha: A \to X$ in $\C$, the pullback $ \alpha^* r_f $, as shown below, is in $^\boxslash{\D}$ for $i = 0,1$.
\begin{equation}
\diagram 
&\alpha^*\Id(f) \ar[rrr] \ar[dd]|!{"2,1";"2,4"}\hole &&&{\Id(f)} \ar[dd]^{\pi_i \epsilon_f} \\
A \ar[rrr] \ar@{=}[dr] \ar[ur]^{\alpha^* r_f} &&& X \ar@{=}[dr] \ar[ur]^{r_f} & \\
& A \ar[rrr]^\alpha && & X
\enddiagram 
\end{equation}
\end{enumerate}
We will call the morphism $\iota_f: \Id(f) \to Y$ in Diagram (\ref{eq:idfact}) the \emph{$\Id$-type of $f$} in $ \C /Y$. 
\end{defn}
Given a morphism $\alpha: f \to g$ in $\C/Y$ and identity types of $f$ and $g$, there is a natural transformation between these two relations because $r_f \in {^\boxslash{\D}}$ and $\epsilon_g \in {\D}$.
\[ \xymatrix@C=7ex{
f \ar[d]^-{r_f} \ar[r]^{r_g \alpha} & \iota(g) \ar[d]^{\epsilon_g} \\
\iota(f) \ar[r]_-{(\alpha \times \alpha) \epsilon_f} \ar@{-->}[ur] & g \times g
} \]
Thus, in what follows, when we assume that every object of $\C$ has an $\Id$-type, we will assume that there is a relation on $\C$ which specifies these $\Id$-types.

\begin{defn}
Consider a display map category $(\C, \D)$ which models $\Sigma$-types. 
If there is a relation $I$ on $\C$ for which $I(X)$ is an identity type on $X$ for each object $X$ of $\C$, then we say that $I$ is a \emph{model of $\Id$-types of objects in $(\C, \D)$}  or just that
 \emph{$(\C, \D)$ models $\Id$-types of objects.} 
If there is a relation $I_Y$ on each slice $\C/Y$ for which $I_Y(f)$ is an identity type of $f$ for each display map $f:X \to Y$ of $\C$, then we say that the collection $\{ I_Y \}_{Y \in \C}$ is a \emph{model of $\Id$-types in $(\C, \D)$} or just that \emph{$(\C, \D)$ models $\Id$-types.} If these relations are functorial, then we call the $\Id$-types \emph{functorial}.
\end{defn}

For any display map category $(\C, \D)$ which models $\Sigma$-types and has a model $I$ of $\Id$-types of objects, the factorization $\reltofact(I)$ (with $\reltofact$ as defined in Notation \ref{notn:functors}) is a weak factorization structure, and its underlying weak factorization system $[\reltofact(I)]$ is $({^\boxslash  \D}, (^\boxslash  \D)^\boxslash)$ \cite[Thm.~2.8]{Emm14}. Our goal in this paper is to understand which weak factorization systems arise in this way. 

Note that $ (^\boxslash \D)^\boxslash$ is the retract closure of $ \D$ and so in particular contains $\D$. Thus, to decide whether a weak factorization system $(\mathsf L, \mathsf R)$ on a category $\C$ does arise from a model of $\Sigma$- and $\Id$-types, it seems that we might have to check whether $(\C, \D)$ is a display map category modeling $\Sigma$- and $\Id$-types for all $\D$ whose retract closure is $\mathsf R$. However, we showed in \cite[Thm.~5.12]{Nor19} that if $(\C, \D)$ is a display map category modeling $\Sigma$- and functorial $\Id$-types, then $(\C, (^\boxslash \D)^\boxslash)$ is a display map category modeling $\Sigma$- and functorial $\Id$-types (and if $(\C, \D)$ modeled $\Pi$-types, then so does $(\C, (^\boxslash \D)^\boxslash)$). Thus, to decide whether a weak factorization system $(\mathsf L, \mathsf R)$ does arise from a model of $\Sigma$- and functorial $\Id$-types, we only have to decide if $(\C, \mathsf R)$ is a display map category modeling functorial $\Sigma$- and $\Id$-types. In this paper, we show how one can decide such a thing, and we will show that we can also drop the requirement of functoriality on $\Id$-types.

In particular, suppose that we want to decide whether a weak factorization system $(\mathsf L, \mathsf R)$ arises as $[\reltofact(I_*)]$ from a model $I$ of (functorial) $\Id$-types in a display map category $(\C,\D)$ (where $*$ is a terminal object of $\C$, so that $I_*$ is a model of $\Id$-types of objects). If it does, $(\mathsf L, \mathsf R)$ itself has a model $J$ of (functorial) $\Id$-types. Whenever $(\mathsf L, \mathsf R)$ itself has a model $J$ of $\Id$-types, we have $[\reltofact(J_*)] = ({^\boxslash \mathsf R}, (^\boxslash \mathsf R)^\boxslash) = (\mathsf L, \mathsf R)$. We will also see in Corollary \ref{dropfunc} that $(\mathsf L, \mathsf R)$ models $\Id$-types if and only if it models $\Id$-types of objects. Thus, a weak factorization system arises from a model of $\Id$-types if and only if it models $\Id$-types of objects; we call such a model an \emph{$\Id$-presentation} of the weak factorization system.

\begin{defn}\label{relidtypes2}
We say that a relation $I$ on $\C$ is an \emph{$\Id$-presentation of the weak factorization system $[\reltofact(I)]$} if
 the factorization $\reltofact(I) $ is a weak factorization structure and $R$ is a model of $\Id$-types of objects in $(\C, \alg{{\reltofact(R)}})$.
\end{defn}

Note that for any relation $I$ which generates a weak factorization structure $\reltofact(I)$, all objects are fibrant. Consider any object $X$ in $\C$. The solution shown in the following lifting problem is a $\rho_{\reltofact(I)}$-algebra structure for $!: X \to *$.
\[ \diagram 
X \ar@{=}[r] \ar[d]_{1_X \times \eta_* !} & X \ar[d] \\
X \times \Psi(*) \ar[r] \ar[ur]^{\pi_X}& *
\enddiagram \]
Thus, in the definition (\ref{relidtypes2}) above, $(\C, \alg{{\reltofact(R)}})$ is in fact a display map category, and so it makes sense to talk of models of $\Id$-types of objects in it. We record this fact here.
\begin{prop}\label{relfact2fib}
Let $R$ be a relation on $\C$ which produces a weak factorization structure $\reltofact(R)$. Then every object is fibrant in $[ \reltofact(R) ]$.
\end{prop}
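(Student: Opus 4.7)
The plan is to verify the exact assertion displayed in the paragraph immediately preceding the proposition: namely that $\pi_X$ equips every morphism $! : X \to *$ with a $\reltofact(R)$-algebra structure, which by Proposition \ref{equivwfs} places $!$ in the right class $\alg{\reltofact(R)}$ of the weak factorization system $[\reltofact(R)]$.

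The first step is to write down the factorization produced by $\reltofact(R) = \iota_* \sigma_* R$ on the map $! : X \to *$. By Construction \ref{con:mappingpath}, if we denote $R(*)$ by $\eta_*, \epsilon_{0*}, \epsilon_{1*} : * \rightrightarrows \Psi(*) \to *$, then $\sigma_* R$ produces the relational factorization
\[ \diagram
X \ar[r]_-{1_X \times \eta_* !} & X \times_* \Psi(*) \ar@/_2ex/[l]_-{\pi_X} \ar[r]_-{\epsilon_{1*} \pi_{\Psi(*)}} & *
\enddiagram \]
so that $\reltofact(R)(!)$ is its underlying factorization, with $\lambda_{\reltofact(R)}(!) = 1_X \times \eta_* !$ and $\rho_{\reltofact(R)}(!) = \epsilon_{1*} \pi_{\Psi(*)}$.

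The second step is to verify that $\pi_X : X \times_* \Psi(*) \to X$ is an algebra structure for $!$, i.e.\ that it solves the lifting problem
\[ \diagram
X \ar@{=}[r] \ar[d]_-{1_X \times \eta_* !} & X \ar[d]^-{!} \\
X \times_* \Psi(*) \ar[r]_-{\epsilon_{1*}\pi_{\Psi(*)}} \ar@{-->}[ur]^-{\pi_X} & *.
\enddiagram \]
The upper triangle $\pi_X \circ (1_X \times \eta_* !) = 1_X$ is immediate: it is the identity $\kappa \lambda = 1_\fullmoon$ of Definition \ref{reldiag} transported through Construction \ref{con:mappingpath}. The lower triangle commutes automatically because both composites land in the terminal object $*$. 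Hence $! \in \alg{\reltofact(R)}$.

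Finally, since $\reltofact(R)$ is assumed to be a weak factorization structure, Proposition \ref{equivwfs} identifies $[\reltofact(R)]$ with the weak factorization system $(\coalg{\reltofact(R)},\, \alg{\reltofact(R)})$; the previous step places every morphism $X \to *$ in its right class, so $X$ is fibrant in $[\reltofact(R)]$. There is no real obstacle here—the algebra structure is forced by the shape of $\sigma_* R$ together with the terminality of $*$—so the proof is a direct verification in the spirit of the discussion preceding the statement.
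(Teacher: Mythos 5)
Your proposal is correct and follows exactly the argument the paper gives in the paragraph preceding the proposition: the factorization of $!: X \to *$ produced by $\reltofact(R)$ is $X \to X \times \Psi(*) \to *$, and $\pi_X$ supplies the algebra structure (the lower triangle being automatic by terminality), so $!$ lies in $\alg{\reltofact(R)}$, the right class of $[\reltofact(R)]$. No differences in approach worth noting.
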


If a weak factorization system is going to have an $\Id$-presentation, then all objects must be fibrant in it. The only other condition that we will find we need to place on a weak factorization system to ensure that it does have an $\Id$-presentation is often called the \emph{Frobenius property} \cite{BG12} and is closely related to modeling $\Pi$-types.

\begin{defn}\label{pilem}
A display map category $(\C, \D)$ \emph{models pre-$\Pi$-types} if for every $g: W \to X$ and $f: X \to Y$ in $\D$, there is a morphism $\Pi_f g$ with codomain $Y$ satisfying the universal property
\[i: \C/X(f^* y, g) \cong \C/Y(y,\Pi_f g ) \]
natural in $y$.
Call the map $\Pi_f g$ a \emph{pre-$\Pi$-type}.

A display map category $(\C, \D)$ \emph{models $\Pi$-types} if it models pre-$\Pi$-types and each pre-$\Pi$-type is a display map.
\end{defn}
\begin{prop}\label{pre2pitypes}
Consider a weak factorization system $(\mathsf L, \mathsf R)$ on $\C$ in which all objects of $\C$ are fibrant and which models pre-$\Pi$-types.

Then $(\C, \mathsf R)$ models $\Pi$-types if and only if $\mathsf L$ is stable under pullback along $\mathsf R$.
\end{prop}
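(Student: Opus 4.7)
The natural strategy is to exploit, for each $f$ in $\mathsf R$, the adjunction $f^* \dashv \Pi_f$ between slice categories that the pre-$\Pi$-type hypothesis supplies. This adjunction transposes morphisms between $\C/Y$ and $\C/X$ (where $f: X \to Y$), and by suitably reinterpreting each lifting square as such a morphism I can convert a lifting problem ``$\ell \boxslash \Pi_f g$'' into ``$f^* \ell \boxslash g$'', and vice versa. Both directions of the equivalence will follow by running this correspondence appropriately.

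For the ($\Leftarrow$) direction, I assume Frobenius and fix a pre-$\Pi$-type $\Pi_f g: P \to Y$ with $f: X \to Y$ and $g: W \to X$ in $\mathsf R$; to show $\Pi_f g \in \mathsf R$, it suffices to check lifting against an arbitrary $\ell: A \to B$ in $\mathsf L$. Given a square $(u, v): \ell \to \Pi_f g$, I view $u$ as a morphism in $\C/Y$ from $v \ell$ to $\Pi_f g$ and transpose through the adjunction to obtain a square
\[
\diagram
A \times_Y X \ar[r]^-{\hat u} \ar[d]_{f^* \ell} & W \ar[d]^g \\
B \times_Y X \ar[r] & X
\enddiagram
\]
in $\C$ whose left edge $f^*\ell$ is the pullback of $\ell$ along the projection $B \times_Y X \to B$. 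That projection is itself the pullback of $f \in \mathsf R$ along $v$, and hence lies in $\mathsf R$, so the Frobenius assumption delivers $f^* \ell \in \mathsf L$. A lift against $g \in \mathsf R$ therefore exists, and transposing it back by the adjunction applied to $1_B \in \C/Y$ yields $\tilde s: B \to P$ with $\Pi_f g \circ \tilde s = v$; naturality of the adjoint bijection in the morphism $\ell: v \ell \to v$ of $\C/Y$ supplies $\tilde s \circ \ell = u$.

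For the ($\Rightarrow$) direction, I assume every pre-$\Pi$-type is a display map and fix $\ell \in \mathsf L$ together with $f \in \mathsf R$; to show that $f^* \ell \in \mathsf L$, I verify $f^* \ell \boxslash g$ for every $g \in \mathsf R$. Given a lifting square, I first pull $g$ back along its bottom map to reduce to the case where the bottom is an identity and the right edge is a pullback $v^* g \in \mathsf R$ of $g$. The adjunction then converts this reduced square into a lifting problem $\ell \boxslash \Pi_f(v^* g)$; since $\Pi_f(v^* g)$ is a pre-$\Pi$-type of maps in $\mathsf R$, it is itself in $\mathsf R$ by hypothesis, so $\ell \in \mathsf L$ lifts against it, and the resulting diagonal transports back through the adjunction and the pullback of $g$ to a solution of the original square.

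The main technical obstacle is the bookkeeping of the adjoint correspondences: lifting problems live in $\C$ while the adjunction is phrased in slice categories, so each map must be reinterpreted as a morphism in the correct slice, and naturality of the adjoint bijection—in particular, in the morphisms $\ell$ and the identities $1_B$—must be invoked carefully to confirm that the transported diagonal really solves the original lifting problem rather than a twisted variant of it.
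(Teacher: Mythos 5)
Your proof is correct and rests on the same key idea as the paper's: transposing lifting problems back and forth across the adjunction $f^*\dashv \Pi_f$, using its naturality in the first variable to see that the transposed diagonal solves the original square. The only difference is one of bookkeeping: the paper transposes a single lifting problem per map (the retract-argument square of the map against its own factorization, whose bottom or top is already an identity), whereas you verify the lifting property against arbitrary maps of the opposite class, which costs you the extra reduction of pulling $g$ back along the bottom of the square and the reinterpretation of general squares as slice-category morphisms.
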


This proposition is very similar to the fact that a left adjoint between two categories with weak factorization systems preserves the left maps \cite[\S16.2]{MP12} if and only if the right adjoint preserves the right maps. Our setting, however, is a bit more convoluted.

\begin{proof}[Proof of Proposition~\ref{pre2pitypes}]
Suppose that $(\C, \mathsf R)$ models $\Pi$-types. Let $i$ denote the bijection
\[i: \C / X(f^* y, g) \cong  \C / Y(y, \Pi_f g) \]
of Definition \ref{pilem}, and consider a morphism $\ell$ of $\mathsf L$ and a morphism $f$ of $\mathsf R$ such that $\cod \ell = \cod f$. To show that $f^* \ell$ is in $\mathsf L$, we must show that for any factorization $\lambda(f^* \ell), \rho(f^* \ell)$ of $f^* \ell$ for which $\lambda(f^* \ell) \in \mathsf L$, $\rho(f^* \ell) \in \mathsf R$, the lifting problem shown on the left below has a solution.
\[ \diagram
\ar[d]_{f^* \ell}{\ }  \ar[r]^{\lambda(f^* \ell)}&{\ } \ar[d]^{\rho(f^* \ell)} & & \ar[d]_{ \ell}{\ }  \ar[r]^{i(\lambda(f^* \ell))}&{\ } \ar[d]^{\Pi_f \rho(f^* \ell)}
\\
\ar@{=}[r] \ar@{-->}[ur] {\ }&{\ } & & \ar@{=}[r] \ar@{-->}[ur]^{\sigma} {\ }&{\ }
\enddiagram \]
Consider the lifting problem shown on the right above. It is the transpose of the above lifting problem under $i$.
It has a solution $\sigma$ since $\ell $ is in $ \mathsf L$ and $\Pi_f \rho(f^* \ell) $ is in $\mathsf R$. Then $i^{-1}( \sigma)$ gives us a solution to our original lifting problem.

Now suppose that $\mathsf L$ is stable under pullback along $\mathsf R$. We need to show that $\Pi_f g$ is in $\mathsf R$. 
The morphism $\Pi_f g$ is in $\mathsf R$ if and only if for any factorization $\lambda(\Pi_f g), \rho(\Pi_f g)$ of $\Pi_f g$ for which $\lambda(\Pi_f g) \in \mathsf L$ and $\rho(\Pi_f g) \in \mathsf R$, 
 there is a solution to the lifting problem shown below on the left.
\[ \diagram
\ar[d]_{\lambda(\Pi_f g)}{\ }  \ar@{=}[r]&{\ } \ar[d]^{\Pi_f g} & & &  \ar[d]_{f^* \lambda(\Pi_f g)}{\ }  \ar[r]^{i^{-1}(1)}&{\ } \ar[d]^{g} 
 \\
\ar[r]_{\rho(\Pi_f g)} \ar@{-->}[ur] {\ }&{\ } & & &  \ar[r]_{f^* \rho(\Pi_f g)} \ar@{-->}[ur]^{\sigma} {\ }&{\ }
\enddiagram \]
Consider the lifting problem on the right above. It is the transpose of the original lifting problem under $i^{-1}$.
Since $f^* \lambda(\Pi_f g)$ is in $\mathsf L$ and $g$ is in $\mathsf R$, there is a solution $\sigma$ to this lifting problem. 
Then $i (\sigma)$ is a solution to the original lifting problem.
\end{proof}

\begin{defn}
A weak factorization system $(\mathsf L, \mathsf R)$ on a finitely complete category $\C$ is \emph{type-theoretic} if it has the following two properties:
\begin{enumerate}
\item all objects are fibrant, and
\item $\mathsf L$ is stable under pullback along $ \mathsf R$.
\end{enumerate}
A weak factorization structure $F$ is \emph{type-theoretic} if $[F]$ is.
\end{defn}



\subsection{Summary}\label{sec:technicalsummary}

We have described the following diagram of categories of relations and factorizations.
\[
 \diagram
 \frel[\C]  \ar@{^{ (}->}[d]  \ar@<0.5ex>[r]^-{\reltofact} &  \ffact[\C] \ar@{^{ (}->}[d] \ar@<0.5ex>[l]^-{\facttorel}\\
   \rel[\C] \ar@<0.5ex>[r]^-{\reltofact}  & \fact[\C]  \ar@<0.5ex>[l]^-{\facttorel}
\enddiagram 
\]

We are interested in the relationship between type-theoretic weak factorization structures and $\Id$-presentations of weak factorization systems. The former are a kind of factorization, so they naturally form a full subcategory of $ \fact[\C]$. The latter are a kind of relation, so they naturally form a full subcategory of $ \rel[\C]$.

\begin{defn}\label{idprescatdef}
Let $\ttWFS[\C]$ be the full subcategory of $\fact[\C]$ spanned by the type-theoretic weak factorization structures on $\C$, and similarly, let $\fttWFS[\C]$ be the full subcategory of $\ffact[\C]$ spanned by the type-theoretic weak factorization structures on $\C$.
\end{defn}
\begin{defn}\label{ttwfscatdef}
Let $\IdPres[\C]$ denote the full subcategory of $\rel[\C]$ spanned by those relations which are $\Id$-presentations, and similarly let $\fIdPres[\C]$ denote the full subcategory of $\frel[\C]$ spanned by the $\Id$-presentations.
\end{defn}

Then we are interested in what relationship the subcategories $\ttWFS[\C]$ and $\IdPres[\C]$ have in the following diagram.
\[
 \diagram
 \fIdPres[\C] \ar@{^{ (}->}[r] \ar@{^{ (}->}[d]  & \frel[\C]  \ar@{^{ (}->}[d]  \ar@<0.5ex>[r]^-{\reltofact} &  \ffact[\C] \ar@{^{ (}->}[d] \ar@<0.5ex>[l]^-{\facttorel} & \fttWFS[\C] \ar@{_{ (}->}[l]  \ar@{^{ (}->}[d]  \\
 \IdPres[\C]  \ar@{^{ (}->}[r] &   \rel[\C] \ar@<0.5ex>[r]^-{\reltofact}  & \fact[\C]  \ar@<0.5ex>[l]^-{\facttorel}&   \ttWFS[\C] \ar@{_{ (}->}[l]
\enddiagram 
\]
In the next sections, we show that $\reltofact, \facttorel$ restrict to functors between $\IdPres[\C]$ and $\ttWFS[\C]$ and constitute an equivalence between them under the proset truncation.

\section{Type-theoretic weak factorization systems from Moore relations}
\label{sec:mrs}

In this section, we consider a finitely complete category $\C$ and a relation $ R$ on $\C$. In the first subsection, we describe structure on $ R$ which will make $\reltofact( R)$ a type-theoretic, algebraic weak factorization structure. We call this a \emph{strict Moore relation structure}. In the second subsection we describe structure on $ R$ which will make $\reltofact( R)$ a type-theoretic weak factorization structure. We call this structure a \emph{Moore relation structure}. 

In Section \ref{sec:idpresandmrs}, we will show that any relation is an $\Id$-presentation of a weak factorization system if and only if it has a Moore relation structure. Then the full subcategory of $\rel[\C]$ spanned by Moore relation systems will coincide with $\IdPres[\C]$. We originally defined the subcategory $\IdPres[\C]$ by referencing the functor $\reltofact : \rel[\C] \to \fact[\C]$. The description of Moore relation structures which follows describes this subcategory more directly, without making reference to $\reltofact $. We will need this direct description to connect the category $\IdPres[\C]$ with the category $\ttWFS[\C]$, the goal of this chapter.

We are mostly interested in the (non-strict) Moore relation structures since these correspond to $\Id$-presentations. However, first we describe strict Moore relation structures. As mentioned in the introduction, these have already been investigated in \cite{BG12}. We mention these first because they have many natural examples, and are thus more readily understandable. By contrast, most examples of non-strict Moore relation structures will come from the equivalence between them and type-theoretic weak factorization systems.

\subsection{Strict Moore relation systems}
\label{sec:smrs}

In this subsection, we consider a functorial relation $  R$ which preserves pullbacks. For any object $X$ in $\C$, denote the image of $  R X$ by 
\[ \diagram
 X   \ar[r]|-{\eta} &   \Psi X \ar@<1.5ex>[l]^-{\epsilon_{1} } \ar@<-1.5ex>[l]_-{\epsilon_{0} }
\enddiagram .\]
Note that the requirement that $ R$ preserves pullbacks is equivalent to the requirement that $\Psi$ does.

For any morphism $f: X \to Y$ of $\C$, denote the factorization $\reltofact(  R)f$ by the following diagram.
\[ \diagram
X \ar[rr]^-{\lambda_f} && Mf  \ar[rr]^-{\rho_f} && Y
\enddiagram \]
Recall that $\lambda$ is a copointed endofunctor on $\C^\M$, and $\rho$ is a pointed endofunctor on $\C^\M$.

In this section, we discuss the structure on $ R$ that will produce a comonad structure on the copointed endofunctor $\lambda$ and a monad structure on the pointed endofunctor $\rho$.
\subsubsection{Strictly transitive functorial relations}

\begin{defn}
Say that a functorial relation $ R$ on $\C$ is \emph{strictly transitive} if there exists a natural transformation $\mu$ with components
\[\mu_X : \pullb{\Psi X}{\Psi X}{\epsilon_1}{\epsilon_0} \to \Psi X\]
for each object $X$ in $\C$
such that:
\begin{enumerate}
\item $\epsilon_i \mu = \epsilon_i \pi_i$ for $i = 0,1$ in the diagram below.
\begin{align}\diagram
  \pullb{\Psi  }{\Psi  }{\epsilon_1}{\epsilon_0} \ar@<0.5ex>[d]^-{\epsilon_1 \pi_1 } \ar@<-0.5ex>[d]_-{\epsilon_0 \pi_0 } \ar[rr]^{\mu} & &  \Psi  \ar@<0.5ex>[d]^-{\epsilon_1 } \ar@<-0.5ex>[d]_-{\epsilon_0 }
 \\  1_\C    \ar@{=}[rr] & & 1_\C  
\enddiagram \label{mulift} \end{align}
\item$(1_\C, \Psi , \epsilon_0, \epsilon_1, \eta, \mu)$ is an internal category in $[\C, \C]$; that is, the following diagrams commute.
\begin{align}
\diagram
 \Psi  \ar[r]^-{\eta \times 1} \ar@{=}[dr] & \pullb{\Psi }{\Psi }{\epsilon_1}{\epsilon_0} \ar[d]^{\mu} & \Psi  \ar[l]_-{1 \times \eta}\ar@{=}[dl]  \\
& \Psi 
\enddiagram 
& & \diagram
\pullb{\Psi }{\Psi }{\epsilon_1}{\epsilon_0} \pullb{}{\Psi }{\epsilon_1}{\epsilon_0} \ar[d]^{\mu \times 1} \ar[rr]^{1 \times \mu} & &  \pullb{\Psi }{\Psi }{\epsilon_1}{\epsilon_0} \ar[d]^\mu\\
\pullb{\Psi }{\Psi }{\epsilon_1}{\epsilon_0} \ar[rr]^{\mu} & &  \Psi 
\enddiagram
\label{intcat2}
\end{align}
\end{enumerate}
\end{defn}

Note that if $ R$ is a monic relation, then the existence of $\mu$ with the commutativity of the diagram in \eqref{mulift} says that the relation $ R(X)$ on each object $X$ of $\C$ is transitive, and the commutativity of the diagrams in \eqref{intcat2} is automatic. Thus, the notion of transitivity here is a generalization of the usual one.

\begin{exmp}\label{hoeqex3}
Consider the relation which takes any object $X$ in $\C$ to the following diagram 
\[ \diagram
 X   \ar[r]|-{X^!} &  \mathrm X^I \ar@<1.5ex>[l]^-{X^1 } \ar@<-1.5ex>[l]_-{X^0 }
\enddiagram \]
as in Example \ref{hoeqex}.

Suppose that there a morphism $m$ making the following diagrams commute.
\[ 
\diagram
* \ar[d]^i \ar[r]^{i}  & I \ar[d]^{\iota_i}  \\
I \ar[r]^-m &  I {_1+_0} I  
\enddiagram
\ \ \ \ \ \ \ 
\diagram
&& I\\
I \ar[r]^-m \ar@/^/@{=}[urr]  \ar@/_/@{=}[drr] & I {_1+_0} I  \ar[ur]_{! + 1_I}  \ar[dr]^{1_I + !}\\
&& I
\enddiagram 
\ \ \ \ \ \ \ 
\diagram 
I \ar[r]^m \ar[d]^m & I {_1+_0} I  \ar[d]^{m {_1+_0} I   } \\
I {_1+_0} I \ar[r]^-{I {_1+_0}   m}   & I {_1+_0}   I {_1+_0} I  
\enddiagram\]
Then taking $X^m: \pullb{X^I}{X^I}{\epsilon_1}{\epsilon_0} \to X^I$ for $\mu_X$ makes this relation strictly transitive.

For example, in the category $\Cat$, there is such an $m$ when $I$ is $\M$ (i.e., the category generated by the graph $0 \to 1$) or the groupoid generated by the graph $0 \to 1$.
\end{exmp}

\begin{exmp} \label{moorepaths1}
Consider the category $\T$ of topological spaces. Let $\R^+$ denote the non-negative reals, and let $\Gamma X$ denote the subspace of $X^{\R^+} \times \R^+$ consisting of pairs $(p, r)$ such that $p$ is constant on $[r,\infty)$. This is called the space of \emph{Moore paths} in $X$, and it is functorial in $X$. We think of this as the space of paths in $X$ of finite length.

There is a natural transformation $c: X \to \Gamma X$ which maps $x \in X$ to the constant path of length $0$ at $x$. There are natural transformations $ev_0, ev_\infty: \Gamma X \to X$ which map a pair $(p, r)$ to $p(0)$ and $p(r)$, respectively. These assemble into a functorial relation $G $ on $\T$.

There is also a natural transformation $\mu_X: \pullb{\Gamma X}{\Gamma X}{ev_\infty}{ev_0} \to \Gamma X$ which maps two paths to their concatenation. To be precise, it takes a pair $((p,r),(p',t')) $ such that $p(r) = p'(0)$ to the pair $(q,s)$ where $s = r + r'$, $q|_{[0,r]} = p|_{[0,r]}$, and $q(x)|_{[r,\infty)} = p'(x-r)$. This makes $G $ a strictly transitive functorial relation.
\end{exmp}

\begin{prop}\label{rightalg} Let $ R$ be a strictly transitive functorial relation on $\C$.
Then the functor $\rho: \C^\M \to \C^\M$ underlies a monad on $\C^{\mathbb 2}$ with unit and multiplication components at an object $f: X \to Y$ in $\C^{\mathbb 2}$ given by the following diagrams
\[\diagram 
X \ar[d]^f \ar[r]^-{\lambda_f} & Mf \ar[d]^{\rho_f}\\
Y \ar@{=}[r] & Y
\enddiagram 
 \ \ \ \diagram
 M\rho_f  \ar[r]^-{1 \times \mu} \ar[d]^{\rho^2_f}& Mf \ar[d]^{\rho_f}
\\
Y \ar@{=}[r] & Y
 \enddiagram\]
where $1 \times \mu:  M\rho_f  \to Mf $ is the morphism 
\[1_X \times \mu_Y:   \pullb{X}{\Psi  Y}{f}{\epsilon_0}  \pullb{}{\Psi  Y}{\epsilon_1}{\epsilon_0} \longrightarrow  \pullb{X}{\Psi  Y}{f}{\epsilon_0} . \]
\end{prop}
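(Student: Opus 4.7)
The plan is to exhibit the claimed unit and multiplication as natural transformations of endofunctors on $\C^\M$, and then deduce each monad law from the corresponding internal-category axiom for $(1_\C, \Psi, \epsilon_0, \epsilon_1, \eta, \mu)$ in $[\C, \C]$. Since $\reltofact$ preserves functoriality, $\rho$ is already known to be a functor, so only the transformations and the three axioms remain.

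First I would unpack the iterated pullbacks. By Construction \ref{con:mappingpath}, $Mf = \pullb{X}{\Psi Y}{f}{\epsilon_0}$ with $\rho_f = \epsilon_1 \pi_{\Psi Y}$, so $M\rho_f = X \times_Y \Psi Y \times_Y \Psi Y$, where the second pullback is taken along $\epsilon_1, \epsilon_0$, and analogously $M\rho^2_f = X \times_Y \Psi Y \times_Y \Psi Y \times_Y \Psi Y$. The unit square $\langle \lambda_f, 1_Y \rangle : f \to \rho_f$ commutes simply because $\rho_f \lambda_f = f$. For the multiplication, the equation $\epsilon_0 \mu = \epsilon_0 \pi_0$ from \eqref{mulift} is precisely what guarantees that $1_X \times \mu_Y$ has the correct $\epsilon_0$-coordinate to land in $Mf$, and $\epsilon_1 \mu = \epsilon_1 \pi_1$ gives $\rho_f \circ (1_X \times \mu_Y) = \rho^2_f$, so $\langle 1_X \times \mu_Y, 1_Y \rangle$ is a bona fide morphism $\rho^2_f \to \rho_f$ in $\C^\M$. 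Naturality in $f$ of both families is then immediate from the functoriality of $\reltofact$ together with the naturality of $\eta$ and $\mu$.

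Next I would verify the three monad axioms componentwise. The codomain component of every morphism in sight is $1_Y$, so only the domain components need checking. After cancelling the leftmost $1_X \times (-)$ over the $X$-leg of the iterated pullbacks, the two unit laws $\mu_\rho \circ \eta_\rho \rho = 1_\rho = \mu_\rho \circ \rho \eta_\rho$ reduce pointwise at $Y$ to the two triangles $\mu(\eta \times 1) = 1 = \mu(1 \times \eta)$ of \eqref{intcat2}, while associativity $\mu_\rho \circ \mu_\rho \rho = \mu_\rho \circ \rho \mu_\rho$ reduces to the square $\mu(\mu \times 1) = \mu(1 \times \mu)$ of \eqref{intcat2}.

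The main obstacle is not conceptual but notational: one must carefully line up the iterated pullbacks $M\rho_f$ and $M\rho^2_f$ with the domains $\pullb{\Psi}{\Psi}{\epsilon_1}{\epsilon_0}$ and $\pullb{\Psi}{\Psi}{\epsilon_1}{\epsilon_0}\pullb{}{\Psi}{\epsilon_1}{\epsilon_0}$ appearing in \eqref{intcat2}, and match the projections entering each $\C^\M$-level composite against those in the internal-category diagrams for $\Psi$. Once this identification is pinned down, every axiom falls out immediately from \eqref{mulift}--\eqref{intcat2}, and no further calculation is required.
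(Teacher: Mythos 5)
Your proposal is correct and follows essentially the same route as the paper: the unit square is immediate, the multiplication square is well-defined and commutes by \eqref{mulift}, and the two unit laws and associativity reduce, after identifying $M\rho_f$ and $M\rho^2_f$ with the iterated pullbacks $\pullb{X}{\Psi Y}{f}{\epsilon_0}\pullb{}{\Psi Y}{\epsilon_1}{\epsilon_0}$ and its three-fold analogue, to the left-hand and right-hand diagrams of \eqref{intcat2} respectively. The paper's proof is exactly this, presented as two explicit commutative diagrams.
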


\begin{proof}
We have already seen that the unit square above commutes. The commutativity of the multiplication square above follows from the commutativity of \eqref{mulift}.

The following diagram displays the unit axioms for the monad.
\[ \diagram
\pullb{X}{\Psi  Y}{f}{\epsilon_0} \ar@{=}[dr] \ar[r]^-{1 \times 1 \times \eta} &  \pullb{X}{\Psi  Y}{f}{\epsilon_0}  \pullb{}{\Psi  Y}{\epsilon_1}{\epsilon_0}   \ar[d]^{1 \times \mu} & & \pullb{X}{\Psi  Y}{f}{\epsilon_0}  \ar[ll]_-{ 1 \times \eta f \times 1} \ar@{=}[dll] \\
& \pullb{X}{\Psi  Y}{f}{\epsilon_0}
\enddiagram \]
Its commutativity follows from that of the left-hand diagram in \eqref{intcat2}.

This diagram displays the associativity axiom for the monad.
\[ \diagram 
\pullb{X}{\Psi  Y}{f}{\epsilon_0}  \pullb{}{\Psi  Y}{\epsilon_1}{\epsilon_0} \pullb{}{\Psi  Y}{\epsilon_1}{\epsilon_0} 
\ar[r]^-{1 \times 1 \times \mu} \ar[d]^{1 \times \mu \times 1}
& \pullb{X}{\Psi  Y}{f}{\epsilon_0}  \pullb{}{\Psi  Y}{\epsilon_1}{\epsilon_0}   \ar[d]^{1 \times \mu} 
\\ \pullb{X}{\Psi  Y}{f}{\epsilon_0}  \pullb{}{\Psi  Y}{\epsilon_1}{\epsilon_0}  \ar[r]^-{1 \times \mu} 
& \pullb{X}{\Psi  Y}{f}{\epsilon_0}   
\enddiagram \]
Its commutativity follows from that of the right-hand diagram in \eqref{intcat2}.
\end{proof}

\subsubsection{Strictly homotopical functorial relations}

\begin{defn}
Say that a functorial relation $ R: \C \to \C^\reldiag$ is \emph{strictly homotopical} if there exist natural transformations $\delta$ and $\tau$ with components
\begin{align*}
\delta_X &: \Psi  X \to \Psi ^2 X \\
\tau_X &: X \times \Psi (*) \to \Psi  X \\
\end{align*}
for each object $X$ of $\C$
such that:
\begin{enumerate}
\item $\eta \eta = \delta \eta$ and $\eta \epsilon_0 = \epsilon_0 \delta$ in the following diagram.
\begin{align}\diagram
\Psi   \ar@<-1.5ex>[d]_-{\epsilon_0 } \ar[rr]^{\delta} & &  \Psi ^2  \ar@<-1.5ex>[d]_-{\epsilon_0 }
 \\  1_\C   \ar[u]|-{\eta} \ar[rr]^\eta & & \Psi    \ar[u]|-{\eta}
\enddiagram \label{deltalift}\end{align}
\item $\epsilon_i \tau = \pi$ for $i = 0,1$ and $\tau (1 \times \eta) = \eta$ in the following diagram.
\begin{align}\diagram
1_\C \times \Psi (*) \ar@<1.5ex>[d]^-{\pi} \ar@<-1.5ex>[d]_-{\pi} \ar[rr]^-{\tau} & &  \Psi  \ar@<1.5ex>[d]^-{\epsilon_1 } \ar@<-1.5ex>[d]_-{\epsilon_0 }
 \\  1_\C   \ar[u]|-{1 \times \eta} \ar@{=}[rr] & & 1_\C   \ar[u]|-{\eta}
\enddiagram \label{taulift}\end{align}
\item $(\Psi , \epsilon_1, \delta)$ is a comonad on $\C$; that is, the following diagrams commute.
\begin{align}
\diagram
& \Psi  \ar[d]^\delta \ar@{=}[dr] \ar@{=}[dl] \\
\Psi  & \Psi ^2 \ar[l]^{\epsilon_1} \ar[r]_{\Psi  \epsilon_1} & \Psi 
 \enddiagram & &
 \diagram
\Psi  \ar[d]^\delta \ar[rr]^\delta & &  \Psi ^2 \ar[d]^\delta\\
\Psi ^2 \ar[rr]^{\Psi \delta} & &  \Psi ^3
\enddiagram
\label{comonad}
\end{align}
\item $\tau$ is a strength for this comonad in the sense that the following diagrams commute.
\begin{align}
\diagram
1_\C \times \Psi  * \ar[d]^\tau \ar[dr]^{\pi_{\Psi *}}
\\ \Psi  \ar[r]^{\Psi  !} & \Psi  *
\enddiagram \label{tau1}
\end{align}
\begin{align}
\diagram
\Psi  \ar[r]^\delta \ar[d]^{\epsilon_0 \times \Psi  !} & \Psi ^2 \ar[d]^{\Psi  \epsilon_0} \\
1_\C \times \Psi * \ar[r]^-\tau & \Psi  
\enddiagram & &
\diagram
1_\C \times \Psi  * \ar[d]^\tau \ar[r]^-{\tau \times \delta} & \Psi (1_\C \times \Psi  *) \ar[d]^{\Psi  \tau} \\
\Psi  \ar[r]^\delta & \Psi ^2
\enddiagram \label{tau2}
\end{align}
\end{enumerate}
\end{defn}

The word \emph{homotopical} is used to describe this functorial relation for the following reason. Suppose that we extract from the functorial relation $ R$ a notion of \emph{homotopy equivalence} in the usual way: where two objects $X$ and $Y$ are homotopic if there are morphisms $f: X \to Y $, $g: Y \to X$, $h: X \to \Psi X$, $i: Y \to \Psi Y$ such that $\epsilon_0 h = gf$, $\epsilon_1 h = 1_X$, $\epsilon_0 i = fg$, and $\epsilon_1 i = 1_Y$. Then the data given in the above definition provide a homotopy between every $X$ and $\Psi X$. 

\begin{exmp}\label{hoeqex4}
More generally, consider the relation in Example \ref{hoeqex3}.

Suppose that there is a morphism $d$ making the following diagrams commute.
\[ \diagram 
I \ar[r]^{1! \times I} \ar@{=}[dr] & I^2 \ar[d]^d & \ar[l]_{I \times 1!} \ar@{=}[dl] I \\
& I
\enddiagram \ \ \ \ \ 
 \diagram 
I \ar[r]^{0! \times I} \ar[dr]_{0!} & I^2 \ar[d]^d & \ar[l]_{I \times 0!} \ar[dl]^{0!} I \\
& I
\enddiagram 
\ \ \ \ \ 
 \diagram 
 I^3 \ar[r]^{I \times d} \ar[d]^{d \times I} & I^2 \ar[d]^d\\
 I^2 \ar[r]^d& I
\enddiagram
\]
Then taking $X^d: X^I \to (X^I)^I$ for $\delta_X$ and $X^!: X \to X^I$ for $\tau_X$ makes this relation strictly homotopical.

For example, in the category $\Cat$, there is such a $d$ when $I$ is $\M$ or the groupoid generated by the graph $I: 0 \to 1$. Let the following diagram denote the graph $(I: 0 \to 1)^2$.
\[ \diagram
00 \ar[r]^{0I} \ar[d]^{I0}& 01 \ar[d]^{I1}\\
10 \ar[r]^{1I}& 11
\enddiagram \]
Then in either case, $d$ is generated by sending $0I$ and $I0$ to the identity morphism on $0$, and $I1$ and $1I$ to $I: 0 \to 1$.
\end{exmp}

\begin{exmp}\label{moorepaths2}
Consider the functorial relation $G$ on topological spaces described in Example \ref{moorepaths1}. 

There is a natural transformation $\delta_X: \Gamma X \to \Gamma^2 X$ which takes a pair $(p,r)$ to the standard path from $c(p(0))$ to $(p,r)$. To be precise, it maps $(p,r)$
to $(q,r)$ where $q(t) = (p_t, t) \in \Gamma X$ and $p_t|_{[0,t]} = p|_{[0,t]}$  for each $t \in \R^+$. 

There is a natural transformation $\tau_X: X \times \Gamma(*) \to \Gamma X$. The space $\Gamma(*)$ is isomorphic to $\R^+$, so it maps a pair $(x, r) \in X \times \R^+$ to the constant path at $x$ of length $r$.

These natural transformations make $G$ into a strictly homotopical functorial relation.
\end{exmp}

In the following lemma, we record a natural transformation $\tilde \tau$ whose existence is equivalent to that of $\tau$, but which will make the proof of the following proposition clearer.

\begin{lem}
Consider a strictly homotopical functorial relation as above. For any $f: X \to Y$, let $\tilde \tau_f:  \pullb{X}{\Psi  Y}{f}{\epsilon_0}  \to \Psi  X$ be the composite
\[\pullb{X}{\Psi  Y}{f}{\epsilon_0} \xrightarrow{1 \times \Psi  !} X \times \Psi  * \xrightarrow{\tau} \Psi  X. \]
It makes the following diagrams commute.
\begin{align}
\diagram 
& \pullb{X}{\Psi  Y}{f}{\epsilon_0} \ar[dr]_{\pi} \ar[dl]^{\pi} \ar[d]^{\tilde \tau} \\
X& \Psi  X \ar[l]^{\epsilon_0} \ar[r]_{\epsilon_1} & X
\enddiagram & & 
\diagram
X \ar[r]^-{1 \times \eta f} \ar[dr]^\eta & \pullb{X}{\Psi  Y}{f}{\epsilon_0} \ar[d]^{\tilde \tau} \\
& \Psi  X
\enddiagram
\label{strength1}
\end{align}
\begin{align}
\diagram
\pullb{X}{\Psi  Y}{f}{\epsilon_0} \ar[d]^-{\tilde \tau} \ar[r]^-{\delta \pi_{\Psi  Y}} & \Psi ^2 Y \ar[d]^{\Psi  \epsilon_0} \\
\Psi  X \ar[r]^{\Psi  f} & \Psi  Y
\enddiagram & &
\diagram
\pullb{X}{\Psi  Y}{f}{\epsilon_0} \ar[d]^{\tilde \tau} \ar[r]^{\tilde \tau \times \delta} &  \Psi (\pullb{X}{\Psi  Y}{f}{\epsilon_0}) \ar[d]^{\Psi  \tilde \tau  } \\
\Psi  X \ar[r]^\delta & \Psi ^2 X
\enddiagram
\label{strength2}
\end{align}
\end{lem}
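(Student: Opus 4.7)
The plan is to verify each of the four squares by substituting the definition $\tilde\tau_f = \tau \circ (1_X \times \Psi!)$ and reducing to the axioms on $\tau$ and $\delta$ from (\ref{taulift}), (\ref{tau1}), and (\ref{tau2}), together with naturality of $\eta$, $\tau$, and $\delta$. No new data are introduced; the lemma is essentially an unpacking calculation that transports the strength axioms from $\tau$ to $\tilde\tau_f$ along the map $1_X \times \Psi!$.

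For the two equations packaged in (\ref{strength1}), the identities $\epsilon_i \tilde\tau_f = \pi_X$ for $i = 0, 1$ are immediate from $\epsilon_i \tau = \pi$ in (\ref{taulift}), since $\epsilon_i \tau (1_X \times \Psi!) = \pi (1_X \times \Psi!) = \pi_X$. The unit equation $\tilde\tau_f \circ (1_X \times \eta f) = \eta$ combines naturality of $\eta$ (giving $\Psi! \circ \eta_Y = \eta_* \circ !_Y$, so that $(1_X \times \Psi!) \circ (1_X \times \eta f) = 1_X \times \eta_*$) with the unit part of (\ref{taulift}), namely $\tau \circ (1 \times \eta) = \eta$. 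For the first square of (\ref{strength2}), naturality of $\tau$ along $f$ gives $\Psi f \circ \tau_X = \tau_Y \circ (f \times 1_{\Psi *})$; composing on the right with $1_X \times \Psi!$ and invoking the pullback equality $f \pi_X = \epsilon_0 \pi_{\Psi Y}$ rewrites $\Psi f \cdot \tilde\tau_f$ as $\tau_Y \circ (\epsilon_0 \times \Psi!) \circ \pi_{\Psi Y}$, which the first square of (\ref{tau2}) identifies with $\Psi\epsilon_0 \circ \delta \circ \pi_{\Psi Y}$.

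The substantive step is the second square of (\ref{strength2}). First, $\tilde\tau_f \times \delta$ denotes the pairing $(\tilde\tau_f, \delta \pi_{\Psi Y})$ into the pullback $\Psi X \times_{\Psi Y} \Psi^2 Y$, which is identified with $\Psi(X \times_Y \Psi Y)$ via the pullback-preservation of $\Psi$ assumed in this subsection; this pairing is well-defined precisely by the first square of (\ref{strength2}) just verified. Expanding $\delta \tilde\tau_f = \delta \tau (1_X \times \Psi!)$ and applying the second square of (\ref{tau2}) rewrites it as $\Psi\tau \circ (\tau \times \delta) \circ (1_X \times \Psi!)$. Using naturality of $\delta$ to slide $\Psi!$ through in the form $\Psi(\Psi!) \circ \delta = \delta \circ \Psi!$, and recognising that $\Psi\tilde\tau_f = \Psi\tau \circ \Psi(1_X \times \Psi!)$ where under the pullback identification $\Psi(1_X \times \Psi!)$ acts as $1_{\Psi X} \times \Psi(\Psi!)$ on the second coordinate, the two sides coincide.

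The main obstacle is precisely the bookkeeping in this last square: one has to carefully track the isomorphism $\Psi(X \times_Y \Psi Y) \cong \Psi X \times_{\Psi Y} \Psi^2 Y$ and the compatibility of $\Psi(1_X \times \Psi!)$ with the pairings on each side. Every individual step reduces to one of the stated axioms or to naturality of a named transformation, so there is no conceptual difficulty beyond diagram chasing.
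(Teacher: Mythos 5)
Your proof is correct and takes essentially the same route as the paper, which simply asserts that the commutativity of the four diagrams is equivalent to that of the corresponding diagrams in (\ref{taulift}), (\ref{tau1}), and (\ref{tau2}); you have merely written out the translation along $1_X \times \Psi!$ (together with naturality of $\eta$, $\tau$, $\delta$ and the pullback-preservation of $\Psi$) that the paper leaves implicit.
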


\begin{proof}
The commutativity of these diagrams is equivalent to that of the corresponding diagrams in \eqref{taulift}, \eqref{tau1}, and \eqref{tau2}.
\end{proof}

\begin{prop}\label{leftalg}
Let $ R$ be a strictly homotopical functorial relation on $\C$. Then the functor $\lambda: \C^\M \to \C^\M$ underlies a comonad on $\C^\M$ where the components of the counit and comultiplication at each object $f: X \to Y$ in $\C^\M$ are given by the following diagrams
$$
\diagram
X \ar[d]^{\lambda_f} \ar@{=}[r] & X \ar[d]^f\\
Mf  \ar[r]^-{\rho_f} & Y
\enddiagram  \ \ \
\diagram
X \ar[d]^{\lambda_f} \ar@{=}[r]  & X \ar[d]^{\lambda^2_f}  \\
Mf    \ar[r]^-{1 \times \tilde \tau \times \delta}   &  
M \lambda_f 
\enddiagram 
$$
where the morphism $1 \times \tilde \tau \times \delta$ is the composition
\[ \pullb{X}{\Psi Y}{f}{\epsilon_0}  \xrightarrow{1_X \times \tilde \tau_f \times \delta_Y } \pullb{X}{}{\lambda_f}{\epsilon_0}(\pullb{\Psi  X}{\Psi ^2 Y}{\Psi  f}{\Psi  \epsilon_0})  \cong \pullb{X}{}{\lambda_f}{\epsilon_0} \Psi  (\pullb{X}{\Psi  Y}{ f}{ \epsilon_0})  . \]
\end{prop}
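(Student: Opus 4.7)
The plan is to verify that $\lambda$ lifts to a comonad on $\C^\M$ with the given counit and comultiplication, which breaks into four tasks: well-definedness of the comultiplication as a morphism into $M\lambda_f$; commutativity of its defining square in $\C^\M$; the two counit axioms; and coassociativity. Throughout, I would use that $\Psi$ preserves pullbacks (which is part of the hypothesis that $R$ preserves pullbacks, imposed throughout this subsection), giving the canonical isomorphism $\Psi Mf \cong \pullb{\Psi X}{\Psi^2 Y}{\Psi f}{\Psi\epsilon_0}$ that appears in the statement.

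For well-definedness, I would note that $1_X \times \tilde\tau_f \times \delta_Y \pi_{\Psi Y}$ factors through the iterated pullback $\pullb{X}{}{\lambda_f}{\epsilon_0}(\pullb{\Psi X}{\Psi^2 Y}{\Psi f}{\Psi\epsilon_0})$ exactly because $\epsilon_0 \tilde\tau_f = \pi_X$ (first triangle of \eqref{strength1}) matches the first pullback leg, and $\Psi f \circ \tilde\tau_f = \Psi\epsilon_0 \circ \delta_Y \pi_{\Psi Y}$ (left square of \eqref{strength2}) matches the second. For the comultiplication to be a morphism $\lambda_f \to \lambda^2_f$ in $\C^\M$, namely $(1 \times \tilde\tau \times \delta) \circ \lambda_f = \lambda^2_f$, I would precompose with $\lambda_f = 1_X \times \eta_Y f$ and apply $\tilde\tau_f \circ (1_X \times \eta_Y f) = \eta_X$ (second triangle of \eqref{strength1}) together with $\delta_Y \eta_Y = \eta_{\Psi Y} \eta_Y$ from \eqref{deltalift}.

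For the counit axioms the top components are trivial. The bottom component of $(\epsilon\lambda)_f \circ \Delta_f$ keeps the first pullback projection as $\pi_X$, and on the second pullback projection yields $\epsilon_1 \delta_Y \pi_{\Psi Y} = \pi_{\Psi Y}$ by the left counit axiom of \eqref{comonad}. Dually, $(\lambda\epsilon)_f \circ \Delta_f$ reduces, by the same projection bookkeeping together with $\epsilon_1 \tilde\tau_f = \pi_X$ from \eqref{strength1}, to the other counit axiom $\Psi\epsilon_1 \circ \delta = 1_\Psi$ of \eqref{comonad}.

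The main obstacle will be coassociativity, $(\Delta \lambda)_f \circ \Delta_f = (\lambda\Delta)_f \circ \Delta_f$, viewed as an equation of morphisms $\lambda_f \to \lambda^3_f$ in $\C^\M$. Both sides share the identity top component, so the equation reduces to one between morphisms $Mf \to M\lambda^2_f$. After projecting into the triple-nested pullback, it decomposes into two pieces: (i) the coassociativity square for the comonad $(\Psi, \epsilon_1, \delta)$ from \eqref{comonad}, and (ii) the strength coassociativity square on the right of \eqref{strength2}. No axiom beyond those defining a strictly homotopical relation is required; the entire effort lies in chasing iterated pullback identifications, made coherent by $\Psi$ preserving pullbacks.
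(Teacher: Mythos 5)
Your proposal is correct and takes essentially the same route as the paper's proof: well-definedness of the comultiplication from the left-hand diagrams of \eqref{strength1} and \eqref{strength2}, commutativity of its square from \eqref{deltalift} and the right-hand diagram of \eqref{strength1}, the two counit laws from the left-hand diagrams of \eqref{comonad} and \eqref{strength1}, and coassociativity from the right-hand diagrams of \eqref{comonad} and \eqref{strength2}, all mediated by $\Psi$ preserving pullbacks. The only nit is that your well-definedness check also needs $\epsilon_0 \delta = \eta \epsilon_0$ from \eqref{deltalift}: the outer pullback condition is $\epsilon_0 \circ (\tilde\tau_f \times \delta_Y \pi_{\Psi Y}) = \lambda_f \pi_X$, which requires $\epsilon_0 \delta_Y \pi_{\Psi Y} = \eta_Y f \pi_X$ in addition to $\epsilon_0 \tilde\tau_f = \pi_X$.
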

\begin{proof}
We have already seen that the counit square commutes. To define $1_X \times \tilde \tau_f \times \delta_Y$ we make use of the commutativity of \eqref{deltalift} and the left hand sides of \eqref{strength1} and \eqref{strength2}. The commutativity of the comultiplication square above is given by the commutativity of \eqref{deltalift} and the right-hand diagram of \eqref{strength1}.

The following diagrams display the comonad axioms. The commutativity of 
$$ \diagram 
\pullb{X}{\Psi  Y}{f}{\epsilon_0}  
 & &  \pullb{X}{}{\lambda_f}{\epsilon_0} \Psi  (\pullb{X}{\Psi  Y}{ f}{ \epsilon_0})    \ar[ll]_-{1 \times \Psi  (\epsilon_1 \pi_{\Psi  Y})} \ar[rr]^-{ \epsilon_1 \pi_{\Psi (X \times \Psi  Y)}} 
 & & \pullb{X}{\Psi  Y}{f}{\epsilon_0}   \\
& & \pullb{X}{\Psi  Y}{f}{\epsilon_0}   \ar[u]|-{1 \times \tilde \tau \times \delta} \ar@{=}[ull] \ar@{=}[urr]
\enddiagram $$
follows from the commutativity of the left-hand diagrams in \eqref{comonad} and \eqref{strength1}, and the commutativity of
$$ \diagram
\pullb{X}{\Psi  Y}{f}{\epsilon_0}   \ar[d]^{1 \times \tilde \tau_f \times \delta_Y} \ar[rr]^-{1 \times \tilde \tau_f \times \delta_Y}
& & \pullb{X}{}{\lambda f}{\epsilon_0} \Psi  (\pullb{X}{\Psi  Y}{ f}{ \epsilon_0}) \ar[d]^{1 \times \tilde \tau_{\lambda } \times \delta_{X \times \Psi  Y}} 
\\ \pullb{X}{}{\lambda f }{\epsilon_0} \Psi  (\pullb{X}{\Psi  Y}{ f}{ \epsilon_0}) \ar[rr]^-{1 \times \Psi (1 \times \tilde \tau_f \times \delta_Y)} 
& & \pullb{X}{}{\lambda ^2f}{\epsilon_0} \Psi  (\pullb{X}{}{\lambda f }{\epsilon_0} \Psi  (\pullb{X}{\Psi  Y}{ f}{ \epsilon_0}))
\enddiagram
$$
follows from the right-hand diagrams in \eqref{comonad} and \eqref{strength2}.
\end{proof}

\subsubsection{Strictly symmetric functorial relations}

\begin{defn}
Say that a functorial relation $ R$ on $\C$ is \emph{strictly symmetric} if there exists a natural isomorphism $\iota$ with components
\[ \iota_X : \Psi  X \to \Psi  X \]
for each object $X$ of $\C$
such that $\iota \eta = \eta$, $\epsilon_0 \iota = \epsilon_1$, and $\epsilon_1 \iota = \epsilon_0$ in the diagram below.
\begin{align}\diagram
\Psi  \ar@<1.5ex>[d]^-{\epsilon_1} \ar@<-1.5ex>[d]_-{\epsilon_0} \ar[rr]^-{\iota} & &  \Psi  \ar@<1.5ex>[d]^-{\epsilon_0 } \ar@<-1.5ex>[d]_-{\epsilon_1 }
 \\  1_\C   \ar[u]|-{ \eta} \ar@{=}[rr] & & 1_\C   \ar[u]|-{\eta}
\enddiagram \label{iotalift}\end{align}
\end{defn}

If $ R$ is a monic relation, then the definition of \emph{strictly symmetric} given here coincides with the usual definition of \emph{symmetric}.

\begin{exmp}\label{hoeqex5}
More generally, consider the relation in Example \ref{hoeqex3}.

Suppose that there an isomorphism $i$ making the following diagrams commute.
\[ \diagram 
* \ar[d]^{n} \ar[dr]^{n-1}& \\
I \ar[r]^i & I
\enddiagram
\]
for $n \in \mathbb Z / 2$.
Then taking $X^i: X^I \to X^I$ for $\iota_X$ makes this relation strictly symmetric.

For example, in the category $\Cat$, there is such an $i$ when $I$ is the groupoid generated by the graph $I: 0 \to 1$.\end{exmp}

\begin{exmp}\label{moorepaths3}
Consider the functorial relation $G$ on topological spaces described in Example \ref{moorepaths1}. 

There is a natural transformation $\iota_X: \Gamma X \to \Gamma X$ which takes a pair $(p,r)$ to the pair $(q,r)$ where $q(t) = p(r-t)$ on $[0,r]$.

This makes $G$ into a strictly symmetric functorial relation.
\end{exmp}

\begin{lem}\label{standardfibration}
Consider a strictly symmetric, strictly transitive functorial relation $ R$ on $\C$. Then for every object $X$ of $\C$, the morphism 
\[\Psi X \xrightarrow{\epsilon_0 \times \epsilon_1} X \times X \]
has a $\reltofact  ( R)$-algebra structure.
\end{lem}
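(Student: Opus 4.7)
By the definition of $\alg{\reltofact(R)}$ in Section~\ref{sec:fibobj}, a $\reltofact(R)$-algebra structure on $g := \epsilon_0 \times \epsilon_1 : \Psi X \to X \times X$ is a morphism
$$s: \Psi X \times_{X \times X} \Psi(X \times X) \to \Psi X$$
satisfying $s \lambda_g = 1_{\Psi X}$ and $g s = \rho_g$, where $\lambda_g = 1_{\Psi X} \times \eta g$ and $\rho_g = \epsilon_1 \pi_{\Psi(X \times X)}$ and the pullback is taken along $g$ on the left and $\epsilon_0^{X \times X}$ on the right.

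The plan is to build $s$ from $\iota$ (reversal) and $\mu$ (concatenation), using the functorial actions $\Psi\pi_1, \Psi\pi_2 : \Psi(X \times X) \to \Psi X$. Thinking pathwise, a point $(\psi, \alpha)$ of the pullback is a path $\psi : a \to b$ in $X$ together with a path $\alpha : (a,b) \to (a',b')$ in $X \times X$; projecting $\alpha$ gives paths $\Psi\pi_1\alpha : a \to a'$ and $\Psi\pi_2\alpha : b \to b'$, and reversing the first and concatenating with $\psi$ and then with $\Psi\pi_2\alpha$ produces a path $a' \to b'$. I would accordingly define $s$ by
$$s(\psi,\alpha) \;=\; \mu\bigl(\mu(\iota\,\Psi\pi_1\alpha,\, \psi),\, \Psi\pi_2\alpha\bigr),$$
which is well-defined because $\epsilon_1 \iota \Psi\pi_1 \alpha = \epsilon_0 \Psi\pi_1 \alpha = \pi_1 \epsilon_0 \alpha = \epsilon_0 \psi$ by \eqref{iotalift} and naturality of $\epsilon_0$, and $\epsilon_1 \mu(\iota\Psi\pi_1\alpha, \psi) = \epsilon_1 \psi = \pi_2 \epsilon_0 \alpha = \epsilon_0 \Psi\pi_2 \alpha$ by \eqref{mulift}.

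Verification of the two axioms is then direct. For $gs = \rho_g$: compute $\epsilon_0 s(\psi,\alpha) = \epsilon_0 \iota \Psi\pi_1 \alpha = \epsilon_1 \Psi\pi_1 \alpha = \pi_1 \epsilon_1 \alpha$ and $\epsilon_1 s(\psi,\alpha) = \epsilon_1 \Psi\pi_2 \alpha = \pi_2 \epsilon_1 \alpha$ (using \eqref{mulift} twice), so $(\epsilon_0 \times \epsilon_1) s = \epsilon_1 \pi_{\Psi(X \times X)} = \rho_g$. For $s \lambda_g = 1_{\Psi X}$: substituting $\alpha = \eta(\epsilon_0\psi, \epsilon_1\psi)$ and using naturality of $\eta$ yields $\Psi\pi_i \alpha = \eta \epsilon_i \psi$, whence $\iota \eta = \eta$ together with the two unit axioms in \eqref{intcat2} collapses the triple composite back to $\psi$. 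The only obstacle is careful bookkeeping of pullback conditions at each intermediate composite; no ingredient beyond strict symmetry and strict transitivity (and their defining identities) is needed.
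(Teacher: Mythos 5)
Your proposal is correct and is essentially the paper's own construction: the paper builds the same algebra structure as the composite $\mu(\iota\,\Psi\pi_1\alpha,\;\mu(\psi,\Psi\pi_2\alpha))$, presented as two successive explicit lifts $a$ and $b$ through the triple pullback $\Psi X\times_X\Psi X\times_X\Psi X$, and this agrees with your $\mu(\mu(\iota\,\Psi\pi_1\alpha,\psi),\Psi\pi_2\alpha)$ by the associativity axiom in \eqref{intcat2}. The well-definedness and axiom checks via \eqref{iotalift}, \eqref{mulift}, the unit laws, and naturality of $\eta$ and $\epsilon_i$ are the same as in the paper.
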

\begin{proof}
We need to show that there is a solution to the following lifting problem.
\[ \diagram 
\Psi  X \ar[d]_{\lambda(\epsilon_0 \times \epsilon_1)} \ar@{=}[r] & \Psi  X \ar[d]^{\epsilon_0 \times \epsilon_1} \\
\pullb{\Psi X}{\Psi (X \times X)}{(\epsilon_0 \times \epsilon_1)}{\epsilon_0} \ar[r]^-{\rho(\epsilon_0 \times \epsilon_1)} \ar@{-->}[ur] & X \times X
\enddiagram \]
We will do this by finding two lifts $a$ and $b$ as illustrated below.
\[
 \diagram 
\Psi  X \ar[dd]_{1 \times \eta (\epsilon_0 \times \epsilon_1)} \ar@{=}[rr] \ar[dr]_{\eta \epsilon_0 \times 1 \times \eta \epsilon_1 }&& \Psi  X \ar[dd]^{\epsilon_0 \times \epsilon_1}  \\ 
& \pullb{\Psi X}{\Psi X}{\epsilon_0}{\epsilon_0} \pullb{}{\Psi X}{\epsilon_1}{\epsilon_0} \ar@{-->}[ur]^b  \ar[dr]^{\epsilon_1 \pi_0 \times \epsilon_1 \pi_2}\\
\pullb{\Psi X}{\Psi (X \times X)}{(\epsilon_0 \times \epsilon_1)}{\epsilon_0} \ar[rr]_-{\epsilon_1 \pi_1} \ar@{-->}[ur]^a && X \times X
\enddiagram \tag{$*$} \]

Let $u: \Psi (X \times X) \to \Psi X \times \Psi X$ denote the universal morphism induced by the universal property of $\Psi X \times \Psi X$. It makes the following diagram commute.
\[ \diagram 
\Psi  X \ar[d]_{1 \times \eta (\epsilon_0 \times \epsilon_1)} \ar[r]^-{1 \times \eta \epsilon_0 \times \eta \epsilon_1} & \pullb{\Psi X}{(\Psi  X \times \Psi X)}{(\epsilon_0 \times \epsilon_1)}{(\epsilon_0 \times \epsilon_0)}  \ar[d]^{(\epsilon_1 \times \epsilon_1) \pi_{(\Psi X \times \Psi X)}} \\
\pullb{\Psi X}{\Psi (X \times X)}{(\epsilon_0 \times \epsilon_1)}{\epsilon_0} \ar[r]^-{\epsilon_1 \pi_{\Psi (X \times X)}} \ar[ur]^{1 \times u} & X \times X
\enddiagram \]
Note that the outside square of this diagram is isomorphic to the lower-left portion of diagram ($*$). Therefore, $1 \times u$ is the lift $a$ that we seek.

Now we let $b: \pullb{\Psi X}{\Psi X}{\epsilon_0}{\epsilon_0} \pullb{}{\Psi X}{\epsilon_1}{\epsilon_0}  \to \Psi X$ be the following composite.
\[\pullb{\Psi X}{\Psi X}{\epsilon_0}{\epsilon_0} \pullb{}{\Psi X}{\epsilon_1}{\epsilon_0}  \xrightarrow{1 \times \mu} 
\pullb{\Psi X}{\Psi X}{\epsilon_0}{\epsilon_0} \xrightarrow{ \iota \times 1} \pullb{\Psi X}{\Psi X}{\epsilon_1}{\epsilon_0}
 \xrightarrow{\mu} \Psi X. \]
This $b$ makes the upper right-hand portion of the above diagram commute.

Therefore, we have found a lift in the original diagram, and shown that $\epsilon_0 \times \epsilon_1$ has a $\reltofact  ( R)$-algebra structure.
\end{proof}

\begin{thm} \label{symthm}
Consider a strictly symmetric functorial relation $ R:  \C \to \C^\reldiag$ such that $ \reltofact  ( R)$ is a weak factorization structure and such that every morphism
\[\Psi X \xrightarrow{\epsilon_0 \times \epsilon_1} X \times X \]
is in $\alg{{\reltofact  ( R)}}$. Then the class $\coalg{{\reltofact  ( R)}}$ is stable under pullback along $\alg{{\reltofact  ( R)}}$.
\end{thm}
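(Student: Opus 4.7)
The plan is to build a $\reltofact(R)$-coalgebra structure on the pullback $f^*\ell \colon f^*A \to X$ directly from the given coalgebra structure on $\ell \in \coalg{\reltofact(R)}$ and the algebra structure on $f \in \alg{\reltofact(R)}$. Unpacking, a coalgebra on $\ell$ is a map $(s_X, s_\Psi) \colon B \to A \times_B \Psi B$ satisfying $s_X \ell = 1_A$, $\ell s_X = \epsilon_0 s_\Psi$, $\epsilon_1 s_\Psi = 1_B$, and $s_\Psi \ell = \eta \ell$; an algebra on $f$ is a map $\phi_f \colon X \times_B \Psi B \to X$ with $\phi_f \lambda_f = 1_X$ and $f \phi_f = \rho_f$. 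Write $\pi_X = f^*\ell$ and $\pi_A = \ell^*f$ for the pullback projections, so $f \pi_X = \ell \pi_A$. I need a pair $(\sigma_A, \sigma_\Psi) \colon X \to f^*A \times_X \Psi X$ with $\sigma \circ f^*\ell = \lambda_{f^*\ell}$ and $\rho_{f^*\ell}\, \sigma = 1_X$.

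First I construct $\sigma_A$. The naive candidate $s_X f \colon X \to A$ fails because $\ell s_X f \ne f$ in general, but the path $s_\Psi f$ connects the two. Using strict symmetry, I reverse this path to obtain one starting at $f$, and then apply $\phi_f$: set $h := \phi_f \circ (1_X, \iota s_\Psi f) \colon X \to X$, which is defined because $\epsilon_0 \iota s_\Psi f = \epsilon_1 s_\Psi f = f$. The identity $f h = \epsilon_1 \iota s_\Psi f = \ell s_X f$ then lets me define $\sigma_A := (h, s_X f) \colon X \to f^*A$. A short diagram chase---using $s_X \ell = 1_A$, $s_\Psi \ell = \eta \ell$, $\iota \eta = \eta$, and $\phi_f \lambda_f = 1_X$---shows $\sigma_A \circ f^*\ell = 1_{f^*A}$.

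The key step is $\sigma_\Psi$. Observe that $\lambda_f$ lies in $\coalg{\reltofact(R)} = \mathsf L$ as the left factor of $\reltofact(R)(f)$, while $\epsilon_0 \times \epsilon_1 \colon \Psi X \to X \times X$ lies in $\mathsf R$ by hypothesis. The square with top edge $\eta \colon X \to \Psi X$, left edge $\lambda_f$, bottom edge $(\phi_f, \pi_X) \colon X \times_B \Psi B \to X \times X$, and right edge $\epsilon_0 \times \epsilon_1$ commutes---both composites equal $(1_X, 1_X)$---so it admits a diagonal lift $w \colon X \times_B \Psi B \to \Psi X$ with $w \lambda_f = \eta$, $\epsilon_0 w = \phi_f$, and $\epsilon_1 w = \pi_X$. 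I then set $\sigma_\Psi := w \circ (1_X, \iota s_\Psi f)$. The three required identities $\epsilon_0 \sigma_\Psi = h$, $\epsilon_1 \sigma_\Psi = 1_X$, and $\sigma_\Psi \circ f^*\ell = \eta \circ f^*\ell$ follow directly, the last by observing that $(1_X, \iota s_\Psi f) \circ f^*\ell = \lambda_f \circ f^*\ell$ (using $s_\Psi \ell = \eta \ell$ and $\iota \eta = \eta$) and then applying $w \lambda_f = \eta$.

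The main obstacle is picking the right lifting problem for $\sigma_\Psi$. Lifting $f^*\ell$ itself against $\epsilon_0 \times \epsilon_1$ would give exactly what one wants, but it is circular: that lifting property is precisely the Frobenius condition we are trying to prove. Instead, one lifts the always-available $\lambda_f$ and then transports the lift along the map $(1_X, \iota s_\Psi f)$. Strict symmetry is essential at two points, both in defining $h$ and in identifying the restriction of $\sigma_\Psi$ to $f^*A$ with a constant path, because the orientation of $s_\Psi$ puts the ``moving'' endpoint opposite to the one expected by $\phi_f$ and by the pullback $X \times_B \Psi B$ over $\epsilon_0$.
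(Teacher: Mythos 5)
Your proof is correct and takes essentially the same route as the paper's: both reverse the path supplied by the coalgebra structure on $\ell$ via the strict symmetry $\iota$, and then transport it by lifting $\lambda_f$ (which lies in the left class) against a right map built from $\epsilon_0 \times \epsilon_1$. The only cosmetic difference is that you lift against $\epsilon_0 \times \epsilon_1\colon \Psi X \to X \times X$ with bottom leg $(\phi_f, \pi_X)$ using the algebra structure on $f$ explicitly, whereas the paper lifts against the composite $f\epsilon_0 \times \epsilon_1\colon \Psi X \to B \times X$ (shown to be a right map as a composite of $\epsilon_0\times\epsilon_1$ with a pullback of $f$).
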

\begin{proof}
Consider the following pullback
\[ \diagram
A \times_Y X \ar[d]_{\pi_X} \ar[r] \pullback & A \ar[d]^\ell  \\
X \ar[r]^r & Y
\enddiagram \]
where $r$ is in $\alg{{\reltofact  ( R)}}$, and $\ell$ is in $\coalg{{\reltofact  ( R)}}$.

The morphism $\pi_X$ is in $\coalg{{\reltofact  ( R)}}$ if and only if there is a solution to the following lifting problem.
\[ \diagram
A \times_Y X \ar[d]_{\pi_X} \ar[r]^-{\lambda_{\pi_X}}  & \pullb{A}{\Psi X}{\ell}{r \epsilon_0}   \ar[d]^{\rho_{\pi_X}}  \\
X \ar@{=}[r] \ar@{-->}[ur]^s & X
\enddiagram  \tag{$*$}\]
We will construct such a lift.

Since $\ell$ is in $\coalg{{\reltofact  ( R)}}$, there is a lift $a$ in the following square.
\[ \diagram
A \ar[d]_{\ell} \ar[r]^-{\lambda_\ell}  & \pullb{A}{\Psi Y}{\ell}{\epsilon_0}   \ar[d]^{\rho_\ell}  \\
Y \ar@{=}[r] \ar@{-->}[ur]^a & Y
\enddiagram \]

Since $r$ is in $\alg{{\reltofact  ( R)}}$, the morphism $r \times 1_X: X \times X \to Y \times X$ is in $\alg{{\reltofact  ( R)}}$ (as it is a pullback of $r$), and then the morphism $r \epsilon_0 \times \epsilon_1 : \Psi X \to Y \times X$ is in $\alg{{\reltofact  ( R)}}$ (as it is the composition of $\epsilon_0 \times \epsilon_1 \in \alg{{\reltofact  ( R)}}$ and $r \times 1_X  \in \alg{{\reltofact  ( R)}}$). 
Thus, there is a lift in the following square.
\[ \diagram
X \ar[d]_{\lambda_r} \ar[r]^\eta  & \Psi X  \ar[d]^{r \epsilon_0 \times \epsilon_1 }  \\
\pullb{X}{\Psi Y}{r}{\epsilon_0} \ar[r]^-{ \epsilon_1 \times 1} \ar@{-->}[ur]^b & Y \times X
\enddiagram \]

Now let $s$ be the following composition.
\[ X \xrightarrow{ar \times 1_X} \pullb{A}{\Psi Y}{\ell}{\epsilon_0}\pullb{}{X}{\epsilon_1}{r} \xrightarrow{\pi_X \times \iota_Y \times \pi_A} \pullb{X}{\Psi Y}{r}{\epsilon_0}\pullb{}{A}{\epsilon_1}{\ell} \xrightarrow{ \pi_A \times b} \pullb{A}{\Psi X}{\ell}{r \epsilon_0} \]
This makes the diagram ($*$) commute.
\end{proof}

\begin{cor}\label{ttwfs}
Consider a strictly symmetric, strictly transitive relation $ R$ on $\C$ such that $\reltofact  ( R)$ is a weak factorization structure.
Then $\reltofact  ( R)$ is type-theoretic.
\end{cor}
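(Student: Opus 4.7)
The plan is to verify the two defining conditions of a type-theoretic weak factorization structure directly from the hypotheses, using results already established in this subsection so that no additional calculation is required.

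First, since $\reltofact(R)$ is assumed to be a weak factorization structure, Proposition \ref{relfact2fib} immediately gives that every object of $\C$ is fibrant in $[\reltofact(R)]$. This handles the first condition in the definition of \emph{type-theoretic}.

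For the second condition, I need to show that $\coalg{\reltofact(R)}$ is stable under pullback along $\alg{\reltofact(R)}$. This is exactly the conclusion of Theorem \ref{symthm}, whose hypotheses are: strict symmetry of $R$ (given), the fact that $\reltofact(R)$ is a weak factorization structure (given), and the fact that every counit morphism $\epsilon_0 \times \epsilon_1 : \Psi X \to X \times X$ lies in $\alg{\reltofact(R)}$. This last hypothesis is precisely what Lemma \ref{standardfibration} supplies from strict symmetry together with strict transitivity.

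Thus the corollary is a direct assembly: Proposition \ref{relfact2fib} gives fibrancy of all objects, Lemma \ref{standardfibration} gives the algebra structure on each $\epsilon_0 \times \epsilon_1$, and feeding this into Theorem \ref{symthm} yields pullback-stability of the left class along the right class. There is no main obstacle since all the work has been done in the preceding lemma and theorem; the corollary is really just a statement that the two structural assumptions on $R$ (symmetry and transitivity) together with $\reltofact(R)$ being a weak factorization structure are exactly what is needed to invoke both of those results simultaneously.
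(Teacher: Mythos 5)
Your proposal is correct and matches the paper's own proof: the paper likewise combines Lemma \ref{standardfibration} (algebra structure on each $\epsilon_0 \times \epsilon_1$ from strict symmetry and strict transitivity) with Theorem \ref{symthm} to obtain pullback-stability of the left class along the right class, and cites Proposition \ref{relfact2fib} for fibrancy of all objects.
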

\begin{proof}
By the previous two results, we know that $\coalg{{\reltofact  ( R)}}$ is stable under pullback along $\alg{{\reltofact  ( R)}}$. By Proposition \ref{relfact2fib}, every object is fibrant. Thus, $\reltofact  ( R)$ is type-theoretic.
\end{proof}

\subsubsection{Summary}
We now have the following theorem.

\begin{thm}\label{rel2awfs}
Consider a strictly transitive, strictly homotopical functorial relation $ R$ on $\C$. Then the functorial factorization ${\reltofact }({ R})$ is an algebraic weak factorization structure on $\C$.
\end{thm}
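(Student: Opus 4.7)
The plan is to assemble this theorem directly from the two main propositions of the section, Propositions \ref{rightalg} and \ref{leftalg}, together with the observation that functoriality of $R$ ensures $\reltofact(R)$ lives in $\ffact[\C]$ in the first place. Unpacking the definition of algebraic weak factorization structure, I need three things: (i) $\reltofact(R)$ is a functorial factorization, (ii) its copointed endofunctor $\lambda$ underlies a comonad on $\C^\M$, and (iii) its pointed endofunctor $\rho$ underlies a monad on $\C^\M$.

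For (i), I would recall that $\reltofact = \iota_* \sigma_*$ by Notation \ref{notn:functors}, and that both $\sigma_* : \rel[\C] \to \relfact[\C]$ (Lemma \ref{lem:adj}) and $\iota_* : \relfact[\C] \to \fact[\C]$ (the discussion preceding diagram (\ref{diag:relrelfactfact})) restrict to the functorial subcategories. Since the hypothesis is that $R$ is a functorial relation, i.e.\ $R \in \frel[\C]$, we conclude $\reltofact(R) \in \ffact[\C]$; that is, $\reltofact(R)$ is a functorial factorization, placing it in the ambient category required by the definition.

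For (ii) and (iii), the work has already been done. The hypothesis that $R$ is strictly homotopical is exactly the input of Proposition \ref{leftalg}, which provides a comonad structure on $\lambda$ with counit $\langle 1, \rho_f \rangle$ and comultiplication $1 \times \tilde\tau \times \delta$ at each $f$. Dually, the hypothesis that $R$ is strictly transitive is exactly the input of Proposition \ref{rightalg}, which provides a monad structure on $\rho$ with unit $\langle \lambda_f, 1 \rangle$ and multiplication $1 \times \mu$ at each $f$. These two structures, pasted together with the functorial factorization from (i), verify every clause in the definition of an algebraic weak factorization structure.

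Consequently there is no real obstacle: Theorem \ref{rel2awfs} is simply the packaging of Propositions \ref{rightalg} and \ref{leftalg} together with the routine observation that $\reltofact$ preserves functoriality. The substantive content — checking the monad and comonad axioms against the strict transitivity and strict homotopy data — has already been carried out in the proofs of those two propositions, so the present argument is essentially a one-line invocation of each.
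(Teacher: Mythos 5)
Your proposal is correct and matches the paper's own proof, which is likewise a direct invocation of Proposition \ref{leftalg} for the comonad structure on $\lambda$ and Proposition \ref{rightalg} for the monad structure on $\rho$. Your extra remark that $\reltofact$ restricts to $\frel[\C] \to \ffact[\C]$, so that $\reltofact(R)$ is indeed a functorial factorization, is a point the paper leaves implicit but is the same route.
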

\begin{proof}
By Proposition \ref{leftalg}, $\lambda_{{\reltofact }({ R})}$ underlies a comonad, and by Proposition \ref{rightalg}, $\rho_{{\reltofact }({ R})}$ underlies a monad.
 \end{proof}

\begin{defn}
A \emph{strict Moore relation structure} on $\C$ is a functorial relation $ R$ which preserves pullbacks together with the structure described in the definitions of strictly transitive, strictly homotopical, and strictly symmetric. A \emph{strict Moore relation system} on a category $\C$ with finite limits is a functorial relation $ R$ which preserves pullbacks and which is strictly transitive, strictly homotopical, and strictly symmetric.
\end{defn}

Then we have the following theorem.

\begin{thm}\label{algmrs2ttwfs}
Consider a strict Moore relation system $ R$ on $\C$. Then the functorial factorization ${\reltofact }({ R})$ is a type-theoretic, algebraic weak factorization structure on $\C$.
\end{thm}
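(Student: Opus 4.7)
The plan is to read the theorem as a conjunction of two assertions: (a) that $\reltofact(R)$ is an algebraic weak factorization structure, and (b) that it is type-theoretic. Both assertions have already been proved in the preceding subsections under hypotheses that are subsumed by the notion of a strict Moore relation system, so the proof amounts to verifying that each cited hypothesis is in force and then invoking the appropriate prior result.

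First I would unpack the definition of a strict Moore relation system: $R$ is a functorial relation that preserves pullbacks, and it is simultaneously strictly transitive, strictly homotopical, and strictly symmetric. Using the strictly transitive and strictly homotopical parts of this structure, Theorem \ref{rel2awfs} applies directly and shows that $\reltofact(R)$ is an algebraic weak factorization structure on $\C$. This already gives half of the conclusion. It also gives a useful byproduct for the next step: every algebraic weak factorization structure is in particular a weak factorization structure, since the comonad on $\lambda_{\reltofact(R)}$ equips each $\lambda_{\reltofact(R)}(f)$ with a coalgebra structure and the monad on $\rho_{\reltofact(R)}$ equips each $\rho_{\reltofact(R)}(f)$ with an algebra structure.

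Having this, I would then invoke Corollary \ref{ttwfs}, whose hypotheses are that $R$ is strictly symmetric, strictly transitive, and that $\reltofact(R)$ is a weak factorization structure. The first two are part of the definition of a strict Moore relation system, and the third has just been established. The corollary therefore yields that $\reltofact(R)$ is type-theoretic, completing the proof.

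There is no real obstacle here: the theorem is essentially a bookkeeping statement gathering Theorem \ref{rel2awfs} and Corollary \ref{ttwfs} into a single summary, and all the substantive work (the comonad and monad laws of Propositions \ref{leftalg} and \ref{rightalg}, the lifting argument of Theorem \ref{symthm}, and the fibrancy of all objects from Proposition \ref{relfact2fib}) has already been carried out earlier in the section.
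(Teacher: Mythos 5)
Your proposal is correct and follows exactly the same route as the paper: the paper's proof simply cites Theorem \ref{rel2awfs} for the algebraic weak factorization structure and Corollary \ref{ttwfs} for the type-theoretic property. Your additional remark that an algebraic weak factorization structure is in particular a weak factorization structure (so that the hypothesis of Corollary \ref{ttwfs} is met) is a correct and worthwhile observation that the paper leaves implicit.
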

\begin{proof}
By the previous theorem, ${\reltofact }({ R})$ is an algebraic weak factorization structure on $\C$. By Proposition \ref{ttwfs}, it is type-theoretic.
\end{proof}

\begin{exmp}
Consider the relation $G$ on the category $\T$ of topological spaces from Examples \ref{moorepaths1}, \ref{moorepaths2}, and \ref{moorepaths3}.
This generates a type-theoretic, algebraic weak factorization structure on $\C$ whose factorization of a morphism $f: X \to Y$ is
\[ X \xrightarrow{1_X \times c f} X \times_Y \Gamma Y \xrightarrow{\pi_Y} Y,\]
whose left class consists of trivial Hurewicz cofibrations, and whose right class consists of Hurewicz fibrations. (This weak factorization system was first described in \cite{Str72} while this particular weak factorization structure was originally described in \cite{May75}.)
\end{exmp}

\subsection{Moore relation systems}
\label{sec:wmrs}

In this section, we describe the minimal structure that a relation $ R$ on $\C$ needs to have so that $\reltofact  ( R)$ is a type-theoretic weak factorization structure. The minimality will be justified by Corollary \ref{ttwfs2mrs}, and though we do not give any examples in this section, many can be obtained from that corollary.

In what follows, we define what it means for a relation to be \emph{transitive}, \emph{homotopical}, and \emph{symmetric}. Note that while the properties required of a transitive relation can be easily seen to be weaker than the properties required of a \emph{strictly} transitive relation, the definitions of homotopical and symmetric given below differ more significantly from their strict predecessors.

In what follows, we will let $\lambda$ denote $\lambda_{\reltofact  ( R)}$, $\rho$ denote $\rho_{\reltofact  ( R)}$, and $M$ denote $\cod \lambda = \dom \rho$.

\subsubsection{Transitive relations}

\begin{defn} \label{weaktrans}
Say that a relation $ R$ on $\C$ is \emph{transitive} if there exists a morphism \[\mu_X : \pullb{\Psi  X}{\Psi  X}{\epsilon_1}{\epsilon_0} \to \Psi  X\]
for every object $X$ of $\C$
such that the following diagrams commute.
\begin{align}\diagram
  \pullb{\Psi  X}{\Psi X }{\epsilon_1}{\epsilon_0} \ar@<0.5ex>[d]^-{\epsilon_1 \pi_1 } \ar@<-0.5ex>[d]_-{\epsilon_0 \pi_0 } \ar[rr]^-{\mu} & &  \Psi X \ar@<0.5ex>[d]^-{\epsilon_1 } \ar@<-0.5ex>[d]_-{\epsilon_0 }
 \\  X   \ar@{=}[rr] & & X 
\enddiagram 
& & &
\diagram
\Psi X \ar[r]^-{1 \times \eta}\ar@{=}[dr] & \pullb{\Psi X}{\Psi X}{\epsilon_1}{\epsilon_0} \ar[d]^{\mu} \\
& \Psi X
\enddiagram 
\label{weakmu}
\end{align}
\end{defn}

\begin{nexmp}\label{hononex}
Now we can see why the relation $G$ on the category $\T$ of topological spaces is more useful than the relation $H$ on $\T$ which sends every space $X$ to
\[ \diagram
 X   \ar[r]|-{X^!} &  \mathrm X^I \ar@<1.5ex>[l]^-{X^1 } \ar@<-1.5ex>[l]_-{X^0 }
\enddiagram \] 
as in Example \ref{hoeqex2a} where $I$ is the usual interval $[0,1]$.

Suppose that this relation is transitive with a $\mu: X^I {_{X^1} \times_{X^0} } X^I \to X^I$ of the form $X^m: X^{[0,2]} \to X^{[0,1]}$. Then $m$ would have to make the following diagrams commute for $i = 0,1$

\[ 
\diagram
* \ar[d]^i \ar[dr]^{i*2}   \\
I \ar[r]^-m &  [0,2]
\enddiagram
\ \ \ \ \ \ \ 
\diagram
I \ar[r]^-m   \ar@/_/@{=}[dr] & [0,2]    \ar[d]^{s}\\
& I
\enddiagram \] 
where $s$ is the surjection which maps $[0,1]$ onto $[0,1]$ identically and $[1,2]$ onto the point $\{1\}$.
These diagrams say that $m(0) = 0$,  $m(1) = 2$, and $sm = 1$. But there is no such continuous function.
\end{nexmp}

%
%
%

\begin{prop}\label{weakrightalg}
Consider a transitive relation $ R$ on $\C$ as above.
Then for every morphism $f$ of $\C$, the morphism $\rho_f$ has a $\reltofact( R)$-algebra structure given by
\[\diagram
 M\rho_f  \ar[r]^-{1 \times \mu} \ar[d]^{\rho^2_f}& Mf \ar[d]^{\rho_f}
\\
Y \ar@{=}[r] & Y
 \enddiagram\]
where $1 \times \mu:  M\rho_f  \to Mf $ is the morphism 
\[1_X \times \mu_Y:   \pullb{X}{\Psi  Y}{f}{\epsilon_0}  \pullb{}{\Psi  Y}{\epsilon_1}{\epsilon_0} \longrightarrow  \pullb{X}{\Psi  Y}{f}{\epsilon_0} . \]
\end{prop}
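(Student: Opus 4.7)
The plan is to unwrap the explicit description of $\reltofact(R)$ applied to $\rho_f$ and then check the two algebra axioms by direct diagram chase, using precisely the two equations in \eqref{weakmu}.

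First I would unfold the data. By Construction \ref{con:mappingpath} and the definition of $\reltofact = \iota_* \sigma_*$, the factorization of $f: X \to Y$ is
\[ X \xrightarrow{1_X \times \eta f} X \times_Y \Psi Y \xrightarrow{\epsilon_1 \pi_{\Psi Y}} Y, \]
so $Mf = X \times_Y \Psi Y$ and $\rho_f = \epsilon_1 \pi_{\Psi Y}$. Applying $\reltofact(R)$ again, now to $\rho_f$, gives $M\rho_f = Mf \times_Y \Psi Y = \pullb{X}{\Psi Y}{f}{\epsilon_0}\pullb{}{\Psi Y}{\epsilon_1}{\epsilon_0}$, with $\lambda_{\rho_f} = 1_{Mf} \times \eta \rho_f$ and $\rho^2_f = \rho_{\rho_f} = \epsilon_1 \pi_{\Psi Y_2}$ (the projection to the second $\Psi Y$-factor followed by $\epsilon_1$).

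Next I would justify that the displayed morphism $1_X \times \mu_Y: M\rho_f \to Mf$ actually lands in the pullback $X \times_Y \Psi Y$. This is precisely the content of $\epsilon_0 \mu = \epsilon_0 \pi_0$ from the left square of \eqref{weakmu}: pairing $\pi_X$ with $\mu_Y \circ (\pi_{\Psi Y_1}, \pi_{\Psi Y_2})$ gives a cone over $f$ and $\epsilon_0$ because $\epsilon_0 \mu_Y = \epsilon_0 \pi_{\Psi Y_1}$, and the latter already matches $f \pi_X$ by the outer pullback.

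Now I would verify the two algebra conditions for $s := 1_X \times \mu_Y$ as a $\reltofact(R)$-algebra structure on $\rho_f$. The commuting square as displayed is the assertion $\rho_f \circ s = \rho_{\rho_f}$, which reduces to $\epsilon_1 \mu_Y = \epsilon_1 \pi_{\Psi Y_2}$ — exactly the other half of the left square in \eqref{weakmu}. The remaining axiom $s \circ \lambda_{\rho_f} = 1_{Mf}$ unwinds to the identity $\mu_Y \circ (1_{\Psi Y} \times \eta) = 1_{\Psi Y}$ on the $\Psi Y$-coordinate, which is precisely the right-hand (unit) triangle in \eqref{weakmu}; on the $X$-coordinate it is trivial.

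There is no real obstacle: each of the two algebra axioms corresponds one-to-one with one of the two diagrams that define transitivity, and the only mildly delicate point is bookkeeping the pullbacks so that $1_X \times \mu_Y$ is well-defined as a morphism into $Mf$, which the left diagram of \eqref{weakmu} handles. No symmetry, homotopicality, or naturality of $\mu$ is needed here.
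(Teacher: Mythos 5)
Your proof is correct and follows essentially the same route as the paper: the commuting square is exactly $\epsilon_1\mu=\epsilon_1\pi_1$ from the left diagram of \eqref{weakmu}, and the unit axiom $(1\times\mu)\circ\lambda_{\rho_f}=1_{Mf}$ reduces to the right-hand triangle $\mu\circ(1\times\eta)=1$. Your explicit check that $1_X\times\mu_Y$ is well-defined into the pullback $Mf$ (via $\epsilon_0\mu=\epsilon_0\pi_0$) is a detail the paper leaves implicit, but otherwise the two arguments coincide.
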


\begin{proof}
The commutativity of the square in the statement follows from the commutativity of the left-hand diagram of \eqref{weakmu}.

It remains to check that the composition of the point with the algebra structure, $(1 \times \mu) \circ \lambda \rho(f)$, is the identity. 
\[ \diagram
\pullb{X}{\Psi  Y}{f}{\epsilon_0} \ar@{=}[drr] \ar[rr]^-{1 \times 1 \times \eta \epsilon_1} &&  \pullb{X}{\Psi  Y}{f}{\epsilon_0}  \pullb{}{\Psi  Y}{\epsilon_1}{\epsilon_0}   \ar[d]^{1 \times \mu} \\
&& \pullb{X}{\Psi  Y}{f}{\epsilon_0}
\enddiagram \]
The commutativity of this diagram follows from that of the right-hand diagram in \eqref{weakmu}. 
\end{proof}

As for the strictly transitive relations of the last section, when a relation $ R$ is monic, our definition of transitivity and the usual definition coincide.
\subsubsection{Homotopical relations}

The definition of \emph{transitive} could immediately be seen to be a weaker version of the definition of strictly transitive. This is not the case for the definition of \emph{homotopical}.

\begin{defn}\label{homotopical}
Say that a relation $ R$ on $\C$ is \emph{homotopical} if for each object $X$ of $\C$, there exists an object $\Psi ^\square X$ of $\C$ with morphisms 
\[ \diagram 
X \ar[r]^-\eta & \Psi ^\square X \ar@<1ex>[r]^{\epsilon_0} \ar[r]|{\epsilon_1} \ar@<-1ex>[r]_{\zeta}& \Psi  X
\enddiagram \]
\[ \delta_X: \Psi X \to \Psi ^\square X, \]
and for every morphism $f: X \to Y$, a morphism 
\[ \tau_f: \pullb{X}{\Psi ^\square Y}{ \eta f }{\zeta} \to \Psi (\pullb{X}{\Psi Y}{f}{\epsilon_0}) \]
which make the following diagrams commute.
\begin{align}
\diagram
\Psi X & \ar[l]_{\epsilon_i} \Psi ^\square X \ar[r]^{\zeta} & \Psi X & & & 
{\Psi ^\square Y} \ar[r]^{\epsilon_i} \ar[d]^\zeta & \Psi X \ar[d]^{\epsilon_0} \\
& X \ar[u]^\eta \ar[ul]^\eta \ar[ur]_\eta & & & &
\Psi X \ar[r]^{\epsilon_i} & X
\enddiagram 
\\
\diagram
X \ar[r]^\eta \ar[dr]_\eta & \Psi X \ar[d]^\delta & &\Psi X \ar[r]^{\epsilon_0} \ar[d]^\delta & X \ar[d]^\eta & & \Psi X \ar@{=}[rd] \ar[d]^\delta \\
& \Psi ^\square X & & \Psi ^\square X \ar@<.5ex>[r]^{\epsilon_0} \ar@<-.5ex>[r]_{\zeta} & \Psi X & & \Psi ^\square X \ar[r]^{\epsilon_1} & \Psi X
\enddiagram \label{weakho}
\\
\diagram
X \ar[r]^-{1 \times \eta f} \ar[dr]_{\eta(1 \times \eta f)} &  \pullb{X}{\Psi ^\square Y}{ \eta f }{\zeta} \ar[d]^\tau 
&\pullb{X}{\Psi ^\square Y}{ \eta f }{\zeta} \ar[dr]^{1 \times \epsilon_i} \ar[d]^\tau  \\
& \Psi (\pullb{X}{\Psi Y}{f}{\epsilon_0}) 
& \Psi (\pullb{X}{\Psi Y}{f}{\epsilon_0})  \ar[r]^-{\epsilon_i} & \pullb{X}{\Psi Y}{f}{\epsilon_0} 
\enddiagram \label{weakho2}\end{align}
where $i $ ranges over $ 0,1$.
\end{defn}

\begin{exmp}
The object $\Psi ^\square X$ will often (as in Proposition \ref{idpres2homo}) be the middle object of the factorization of the morphism $\eta: \Psi X \to \Psi ^{\times 4} X$ where 
 $\Psi ^{\times 4}X$ is the limit of the diagram below on the left and $\eta: \Psi X \to \Psi ^{\times 4} X$ is induced by the cone below on the right
\[ 
\diagram
X & \Psi X \ar[l]_{\epsilon_0} \ar[r]^{\epsilon_1} & X\\
\Psi X  \ar[u]_{\epsilon_0} \ar[d]^{\epsilon_1}  & &  \Psi X \ar[d]^{\epsilon_1} \ar[u]_{\epsilon_0}  &   \\
X & \Psi X \ar[l]_{\epsilon_0} \ar[r]^{\epsilon_1} & X
\enddiagram \ \  \  \ \ \  \diagram
X & \Psi X \ar[l]_{\epsilon_0} \ar[r]^{\epsilon_1} & X\\
\Psi X  \ar[u]_{\epsilon_0} \ar[d]^{\epsilon_1}  & \Psi X \ar[u]_{\eta \epsilon_0} \ar[d]^{\eta \epsilon_1} \ar@{=}[l] \ar@{=}[r] &  \Psi X \ar[d]^{\epsilon_1} \ar[u]_{\epsilon_0}  &   \\
X & \Psi X \ar[l]_{\epsilon_0} \ar[r]^{\epsilon_1} & X
\enddiagram  \]

In the category of topological spaces, this might look like the following. We use the relation $H$ here, as described in Non-example \ref{hononex}, though we ultimately are interested in the relation $G$. This is because the description involving $H$ is much easier to write down but still provides intuition to think about $G$.

Let $\delta(I \times I)$ denote the boundary of the unit square $I \times I$. 
Let $S$ denote the mapping cylinder of the function $\delta(I \times I) \to I$ which maps $(x,y)$ to $x$.
That is, $S$ is the quotient of $I \times \delta(I \times I)$ obtained by identifying the point $(1,x,y)$ with the point $(1,x,y')$ for any $(x,y),(x,y')$ in $\delta(I \times I)$.

\[
\begin{tikzpicture}[fill opacity=.3,draw opacity=1]
\draw [fill = gray] (0,0.3) -- (.4,2) -- (.8,0) -- (0,0.3);
\draw [fill = gray] (2,0.3) -- (2.4,2) -- (2.8,0) -- (2,0.3);
\draw [fill = gray] (.8,0) -- (.4,2) -- (2.4,2) -- (2.8,0) -- (.8,0);
\draw [fill = gray] (0,0.3) -- (.4,2) -- (2.4,2) -- (2,0.3) -- (0,0.3);
\end{tikzpicture}
\]

Then let $I^{\square}X $ denote the space $X^S$ of all continuous functions from $S$ into $X$. The morphism $\eta: X \to I^\square X$ is the precomposition with the map $S \to *$. The projections $\epsilon_i, \zeta_i: I^\square X \to X^I$ are the precompositions of the inclusions of $I$ into each of the bottom edges in the illustration above.

There is a continuous function $S \to I$ which takes the bottom edges associated to $\epsilon_0$ and $\zeta_0$ and the top vertex above their intersection to the point $0 \in I$ and maps the top edge and the edges associated to $\epsilon_1$ and $\zeta_1$ each homeomorphically onto $I$. Precomposition with this continuous function is the morphism $\delta_X: X^I \to I^\square X$.

There is a homotopy equivalence $h: S \to I^2$ which commutes with the projections to $I^{\times 4}$. Then the composition \[ \pullb{X}{I^\square Y}{ \eta f }{\zeta_0} \xhookrightarrow{\eta \times h} \pullb{X^I}{ Y^{I \times I}}{  f^I }{\epsilon_0^I} \cong  (\pullb{X}{ Y^{I}}{  f }{\epsilon_0})^I \]
is the morphism $\tau_f$.

Now we can provide some intuition as to why we have switched from considering $\Psi ^2 X$ to $\Psi ^\square X$. In a space $\Gamma^2 X$, the lengths of the sides are coupled (e.g., for any $\gamma \in \Gamma^2 X$, $\Gamma \epsilon_0 \gamma $ has the same length as $\Gamma \epsilon_1 \gamma $) but this is not the case for $\Gamma^\square X$. In particular, the middle diagram of \ref{weakho} could not be satisfied if $\Gamma^\square X = \Gamma^{2}X$. To explain this from a slightly different perspective, when we obtain $\Psi ^\square X$ in this way, the morphism $\Psi ^\square X \to \Psi ^{\times 4}X$ is in the right class of the weak factorization system, giving it better behavior than $\Psi ^2 X \to \Psi ^{\times 4}X$.

This intuition will be given mathematical content when we extract this structure from any type-theoretic weak factorization structure in Proposition \ref{idpres2homo}.
\end{exmp}
\begin{prop}\label{weakleftalg} 
Let $ R$ be a homotopical relation on $\C$. Then for every morphism $f: X \to Y$ in $\C$, 
the morphism $\lambda_f$ has a $\reltofact( R)$-coalgebra structure given by
\[\diagram
 X  \ar@{=}[r] \ar[d]^{\lambda_f}& X \ar[d]^{\lambda^2_f} \\
Mf \ar[r]^{1 \times \tau \delta} & M \lambda_f
 \enddiagram\]
where $1 \times \tau \delta :  Mf  \to M\lambda_f $ is
\[1_X \times \tau_f \delta_Y: \pullb{X}{\Psi  Y}{f}{\epsilon_0} \to \pullb{X}{\Psi (\pullb{X}{\Psi  Y}{f}{\epsilon_0})}{1 \times \eta f}{\epsilon_0}  .\]
\end{prop}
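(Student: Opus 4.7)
The plan is to verify the two coalgebra axioms for the proposed map $s := 1_X \times \tau_f \delta_Y$ from $Mf$ to $M\lambda_f$: commutativity of the square in the statement (equivalently, $s\lambda_f = \lambda^2_f$) and the retraction equation $\rho_{\lambda_f} \circ s = 1_{Mf}$. Since both $Mf = \pullb{X}{\Psi Y}{f}{\epsilon_0}$ and $M\lambda_f = \pullb{X}{\Psi(Mf)}{\lambda_f}{\epsilon_0}$ are pullbacks, each identity can be checked by projecting to $X$ and to the $\Psi$-factor.

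First I would confirm that $s$ is well defined. This reduces to two checks: that $\zeta \delta = \eta \epsilon_0$ on $\Psi Y$ (so that $1_X \times \delta_Y$ factors through $\pullb{X}{\Psi^\square Y}{\eta f}{\zeta}$) and that $\lambda_f \pi_X = \epsilon_0 \tau_f (1_X \times \delta_Y)$ (so that $1_X \times \tau_f \delta_Y$ lands in the pullback $M\lambda_f$). Both follow from the middle square of \eqref{weakho} together with the $i=0$ half of the right-hand diagram of \eqref{weakho2}. Next, for commutativity of the square in the statement, the $X$-component of each side is already $1_X$, while the $\Psi(Mf)$-component of $s\lambda_f$ evaluates as
\[\tau_f \circ (1_X \times \delta_Y) \circ (1_X \times \eta f) \;=\; \tau_f \circ (1_X \times \eta f) \;=\; \eta\, \lambda_f,\]
where the first equality uses $\delta \eta = \eta$ from the leftmost diagram of \eqref{weakho} and the second uses the left-hand diagram of \eqref{weakho2}. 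This matches the $\Psi(Mf)$-component of $\lambda^2_f = 1_X \times \eta \lambda_f$, so $s\lambda_f = \lambda^2_f$ by the universal property of $M\lambda_f$.

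For the retraction equation, $\rho_{\lambda_f}$ is the composite $\epsilon_1 \pi_{\Psi(Mf)}$, so
\[\rho_{\lambda_f} \circ s \;=\; \epsilon_1 \circ \tau_f \circ (1_X \times \delta_Y) \;=\; (1_X \times \epsilon_1) \circ (1_X \times \delta_Y) \;=\; 1_X \times (\epsilon_1 \delta_Y) \;=\; 1_{Mf},\]
using $\epsilon_1 \tau = 1 \times \epsilon_1$ from the right-hand diagram of \eqref{weakho2} and $\epsilon_1 \delta = 1_{\Psi Y}$ from the rightmost diagram of \eqref{weakho}. The main obstacle throughout is not any single calculation but the bookkeeping of pullback types: at each step one must invoke the compatibilities of $\eta$, $\epsilon_i$, $\zeta$, $\delta$, and $\tau$ to ensure that intermediate composites factor through the correct pullback. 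Notably, no naturality of $R$ is used, so the argument goes through even though $R$ is only an afunctor.
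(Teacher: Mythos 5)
Your proposal is correct and follows essentially the same route as the paper's proof: well-definedness of $1_X \times \tau_f\delta_Y$ via the middle diagram of \eqref{weakho} and the $i=0$ case of \eqref{weakho2}, the square via $\delta\eta=\eta$ and $\tau(1\times\eta f)=\eta(1\times\eta f)$, and the retraction via $\epsilon_1\tau = 1\times\epsilon_1$ and $\epsilon_1\delta = 1_{\Psi Y}$. Your closing remark that no naturality of $R$ is needed is also consistent with the paper's afunctorial setting.
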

\begin{proof}
The morphism $1_X \times \tau_f \delta_Y$ in the statement is induced from the morphisms $\pi_X : \pullb{X}{\Psi  Y}{f}{\epsilon_0} \to X$ and $\tau_f (1 \times \delta_Y): \pullb{X}{\Psi  Y}{f}{\epsilon_0} \to \Psi (\pullb{X}{\Psi  Y}{f}{\epsilon_0})$ by the universal property of the pullback $\pullb{X}{\Psi (\pullb{X}{\Psi  Y}{f}{\epsilon_0})}{1 \times \eta f}{\epsilon_0}$ because the following diagram commutes.
\[ \diagram
 &  & X \ar[dr]^{1 \times \eta f }\\
\pullb{X}{\Psi  Y}{f}{\epsilon_0} \ar[rrr]^{1 \times \eta \epsilon_0}\ar[urr]^{1_X} \ar[dr]^{1 \times \delta}& & &  \pullb{X}{\Psi  Y}{f}{\epsilon_0}  \\
& \pullb{X}{\Psi ^\square Y}{\eta f}{\zeta} \ar[urr]^{1 \times \epsilon_0} \ar[r]_{ \tau} & \Psi (\pullb{X}{\Psi  Y}{f}{\epsilon_0}) \ar[ur]_{\epsilon_0}
\enddiagram \]
The upper triangle commutes by the properties of the pullback in its domain. The lower left-hand triangle commutes because of the commutativity of the middle diagram in \eqref{weakho}. The lower right-hand triangle commutes because of the commutativity of the right-handle diagram in \eqref{weakho2}

The coalgebra square in the statement can be written more explicitly as
\[\diagram
 X  \ar@{=}[r] \ar[d]^{1 \times \eta f}& X \ar[d]^{1 \times \eta(1 \times \eta f)} \\
\pullb{X}{\Psi  Y}{f}{\epsilon_0} \ar[r]^-{1 \times \tau \delta} &\pullb{X}{\Psi (\pullb{X}{\Psi  Y}{f}{\epsilon_0})}{1 \times \eta f}{\epsilon_0}
 \enddiagram\]
 The commutativity of this square follows from the commutativity of the outside of the following diagram by the universal property of the pullback in the lower right-hand corner.
 \[\diagram
 X  \ar@{=}[r] \ar[d]^{1 \times \eta f}&  X  \ar@{=}[r]  \ar[d]^{1 \times \eta f} &  X \ar[d]^{1 \times \eta(1 \times \eta f)} \\
\pullb{X}{\Psi  Y}{f}{\epsilon_0} \ar[r]^-{1 \times  \delta} &\pullb{X}{\Psi ^\square Y}{\eta f}{\zeta} \ar[r]^-{1 \times \tau } &{X}\times{\Psi (\pullb{X}{\Psi  Y}{f}{\epsilon_0})}
 \enddiagram\]
 The left-hand square above commutes because the left-hand diagram of \eqref{weakho} commutes. The right-hand square commutes because the left-hand diagram of \eqref{weakho2} commutes.
 
Now it remains to check that the copoint composed with the coalgebra is the identity.
\[\diagram
 X  \ar@{=}[r] \ar[d]^{\lambda_f}& X \ar[d]^{\lambda^2_f}  \ar@{=}[r] & X \ar[d]^{\lambda_f} \\
Mf \ar[r]^{1 \times \tau \delta} \ar@{=}@/_2ex/[rr]& M \lambda_f \ar[r]^{\rho \lambda_f} & Mf 
 \enddiagram\]
 We have already seen that the two squares in this diagram commute. The composition $(\rho \lambda_f)(1 \times \tau \delta)$ is equal to the composition of the top and right sides of the diagram below.
 \[ \diagram 
\pullb{X}{\Psi  Y}{f}{\epsilon_0} \ar@{=}@/_/[drr] \ar[r]^-{1 \times  \delta} &\pullb{X}{\Psi ^\square Y}{\eta f}{\zeta_0} \ar[r]^-{1 \times \tau } \ar[dr]^{1 \times \epsilon_1} &{X}\times{\Psi (\pullb{X}{\Psi  Y}{f}{\epsilon_0})} \ar[d]^{\epsilon_1 \pi_1} \\
& & \pullb{X}{\Psi  Y}{f}{\epsilon_0}
 \enddiagram \]
 The commutativity of the left-hand triangle above follows from the commutativity of the right-hand diagram in \eqref{weakho}. The commutativity of the right-hand triangle above follows from the commutativity of the right-hand diagram in \eqref{weakho2}.
\end{proof}

\subsubsection{Symmetric relations}

\begin{defn}
Say that a relation $ R$ on $\C$ is \emph{symmetric} if there exist morphisms
\[ \nu_X : \Psi  X {_{\epsilon_0} \times_{\epsilon_0}} \Psi X \to \Psi  X \]
for every object $X$ of $\C$
such that the following diagrams commute.
\begin{align}\diagram
  \pullb{\Psi X }{\Psi X }{\epsilon_0}{\epsilon_0} \ar@<0.5ex>[d]^-{\epsilon_1 \pi_1 } \ar@<-0.5ex>[d]_-{\epsilon_1 \pi_0 } \ar[rr]^-{\nu} & &  \Psi X \ar@<0.5ex>[d]^-{\epsilon_1 } \ar@<-0.5ex>[d]_-{\epsilon_0 }
 \\  X   \ar@{=}[rr] & & X
\enddiagram 
& & &
\diagram
\Psi X \ar[r]^-{\eta \times 1}\ar@{=}[dr] & \pullb{\Psi X}{\Psi X}{\epsilon_0}{\epsilon_0} \ar[d]^{\nu} \\
& \Psi X
\enddiagram 
\end{align}
\end{defn}

This might look very different from the strict symmetry defined previously. But notice that if one takes $\iota_X: \Psi X \to \Psi X$ to be the following composite,
\[ \Psi X \xrightarrow{1 \times \eta \epsilon_0}   \pullb{\Psi X }{\Psi X }{\epsilon_0}{\epsilon_0}  \xrightarrow{\nu} \Psi X \]
then $\iota \eta = \eta$, $\epsilon_0 \iota = \epsilon_1$, and $\epsilon_1 \iota = \epsilon_0$ in the diagram below.
\[\diagram
\Psi  X\ar@<1.5ex>[d]^-{\epsilon_1} \ar@<-1.5ex>[d]_-{\epsilon_0} \ar[rr]^-{\iota} & &  \Psi X \ar@<1.5ex>[d]^-{\epsilon_0 } \ar@<-1.5ex>[d]_-{\epsilon_1 }
 \\  X  \ar[u]|-{ \eta} \ar@{=}[rr] & & X   \ar[u]|-{\eta}
\enddiagram \]
Thus, $\nu$ begets a more familiar symmetry, $\iota$.
However, we need the full strength of the morphism $\nu$ to prove the following lemma. 

\begin{lem}\label{standardfibrationweak}
Consider a symmetric, transitive relation $ R$ on $\C$. Then for every object $X$ of $\C$, the morphism 
\[\Psi X \xrightarrow{\epsilon_0 \times \epsilon_1} X \times X \]
has a $\reltofact( R)$-algebra structure.
\end{lem}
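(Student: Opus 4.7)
The plan is to parallel the proof of Lemma~\ref{standardfibration}, replacing the composite $\mu \circ (\iota \times 1)$ used there by a single application of $\nu$. I would exhibit a diagonal filler $s$ for the lifting problem
\[ \diagram
\Psi  X \ar[d]_{\lambda(\epsilon_0 \times \epsilon_1)} \ar@{=}[r] & \Psi  X \ar[d]^{\epsilon_0 \times \epsilon_1} \\
\pullb{\Psi X}{\Psi (X \times X)}{\epsilon_0 \times \epsilon_1}{\epsilon_0} \ar[r]^-{\rho(\epsilon_0 \times \epsilon_1)} \ar@{-->}[ur]^{s} & X \times X
\enddiagram \]
as a composite $s = b \circ a$. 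Here $a = 1 \times u$ is induced by the universal morphism $u = \langle \Psi \pi_0, \Psi \pi_1 \rangle : \Psi(X \times X) \to \Psi X \times \Psi X$ exactly as in Lemma~\ref{standardfibration}, and lands in the iterated pullback $\pullb{\Psi X}{\Psi X}{\epsilon_0}{\epsilon_0}\pullb{}{\Psi X}{\epsilon_1}{\epsilon_0}$.

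For $b$, I would take $b = \nu \circ (1 \times \mu)$. On a generalized element $(q_0, p, q_1)$ of the iterated pullback, this first forms the transitive composite $p \cdot q_1 := \mu(p, q_1)$, a path from $\epsilon_0 p$ to $\epsilon_1 q_1$, and then applies $\nu$ to the pair $(q_0, p \cdot q_1)$, which share start point $\epsilon_0 p = \epsilon_0 q_0$, producing a path from $\epsilon_1 q_0$ to $\epsilon_1 q_1$. Since $(q_0, q_1) = u(q)$ satisfies $(\epsilon_1 q_0, \epsilon_1 q_1) = \epsilon_1 q$, the endpoints of $s(p, q)$ agree with $\rho(\epsilon_0 \times \epsilon_1)(p, q) = \epsilon_1 q$.

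The two remaining verifications are routine. The endpoint condition $(\epsilon_0 \times \epsilon_1) \circ s = \rho(\epsilon_0 \times \epsilon_1)$ is immediate from the endpoint squares in the definitions of $\mu$ and $\nu$. For the unit condition $s \circ \lambda(\epsilon_0 \times \epsilon_1) = 1_{\Psi X}$: naturality of $\eta$ gives $q_i = \eta \epsilon_i p$ from the lifted element $(p, \eta(\epsilon_0 p, \epsilon_1 p))$; then the right-unit law $\mu(1 \times \eta) = 1$ from \eqref{weakmu} collapses $\mu(p, \eta \epsilon_1 p)$ to $p$, and the unit law $\nu(\eta \times 1) = 1$ in turn collapses $\nu(\eta \epsilon_0 p, p)$ back to $p$. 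The main (modest) obstacle is bookkeeping: keeping the three differently-indexed pullbacks straight so that $1 \times \mu$ and $\nu$ compose correctly. Beyond that, no data beyond symmetry and transitivity is needed, which is precisely the content of the lemma.
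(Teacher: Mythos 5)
Your proposal is correct and follows the paper's proof essentially verbatim: the paper also factors the lift as $b \circ a$ with $a = 1 \times u$ for the universal morphism $u : \Psi(X \times X) \to \Psi X \times \Psi X$ and $b = \nu \circ (1 \times \mu)$, noting explicitly that this is the strict proof of Lemma~\ref{standardfibration} with $\mu(\iota \times 1)$ replaced by $\nu$. Your endpoint and unit verifications match the roles of diagrams \eqref{weakmu} and the defining squares for $\nu$ exactly as used there.
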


\begin{rem}
Note that the following proof for this Lemma is identical to that for the strict version (Lemma \ref{standardfibrationweak}) except that here we define $b$ to be $\nu (1 \times \mu)$ instead of $\mu(\iota \times 1)(1 \times \mu)$.
\end{rem}

\begin{proof}
We need to show that there is a solution to the following lifting problem.
\[ \diagram 
\Psi  X \ar[d]_{\lambda(\epsilon_0 \times \epsilon_1)} \ar@{=}[r] & \Psi  X \ar[d]^{\epsilon_0 \times \epsilon_1} \\
\pullb{\Psi X}{\Psi (X \times X)}{(\epsilon_0 \times \epsilon_1)}{\epsilon_0} \ar[r]^-{\rho(\epsilon_0 \times \epsilon_1)} \ar@{-->}[ur] & X \times X
\enddiagram \]
We will do this by finding two lifts $a$ and $b$ as illustrated below.
\[
 \diagram 
\Psi  X \ar[dd]_{1 \times \eta (\epsilon_0 \times \epsilon_1)} \ar@{=}[rr] \ar[dr]_{\eta \epsilon_0 \times 1 \times \eta \epsilon_1 }&& \Psi  X \ar[dd]^{\epsilon_0 \times \epsilon_1}  \\ 
& \pullb{\Psi X}{\Psi X}{\epsilon_0}{\epsilon_0} \pullb{}{\Psi X}{\epsilon_1}{\epsilon_0} \ar@{-->}[ur]^b  \ar[dr]^{\epsilon_1 \pi_0 \times \epsilon_1 \pi_2}\\
\pullb{\Psi X}{\Psi (X \times X)}{(\epsilon_0 \times \epsilon_1)}{\epsilon_0} \ar[rr]_-{\epsilon_1 \pi} \ar@{-->}[ur]^a && X \times X
\enddiagram \tag{$*$} \]

Let $u: \Psi (X \times X) \to \Psi X \times \Psi X$ denote the morphism induced by the universal property of $\Psi X \times \Psi X$. It makes the following diagram commute.
\[ \diagram 
\Psi  X \ar[d]_{1 \times \eta (\epsilon_0 \times \epsilon_1)} \ar[r]^-{1 \times \eta \epsilon_0 \times \eta \epsilon_1} & \pullb{\Psi X}{(\Psi  X \times \Psi X)}{(\epsilon_0 \times \epsilon_1)}{(\epsilon_0 \times \epsilon_0)}  \ar[d]^{(\epsilon_1 \times \epsilon_1) \pi_{(\Psi X \times \Psi X)}} \\
\pullb{\Psi X}{\Psi (X \times X)}{(\epsilon_0 \times \epsilon_1)}{\epsilon_0} \ar[r]^-{\epsilon_1 \pi_{\Psi (X \times X)}} \ar[ur]^{1 \times u} & X \times X
\enddiagram \]
Note that the outside square of this diagram is isomorphic to the lower-left triangle of diagram ($*$). Therefore, $1 \times u$ is the lift $a$ that we seek.

Now we let $b: \pullb{\Psi X}{\Psi X}{\epsilon_0}{\epsilon_0} \pullb{}{\Psi X}{\epsilon_1}{\epsilon_0}  \to \Psi X$ be the following composite.
\[\pullb{\Psi X}{\Psi X}{\epsilon_0}{\epsilon_0} \pullb{}{\Psi X}{\epsilon_1}{\epsilon_0}  \xrightarrow{1 \times \mu} 
\pullb{\Psi X}{\Psi X}{\epsilon_0}{\epsilon_0} \xrightarrow{ \nu}  \Psi X. \]
This $b$ makes the upper right-hand portion of the above diagram commute.

Therefore, we have found a lift in the original diagram and shown that $\epsilon_0 \times \epsilon_1$ has a $\reltofact( R)$-algebra structure.
\end{proof}

\begin{thm} \label{symthmweak}
Consider a symmetric relation $ R$ on $\C$ such that $\reltofact  ( R)$ is a weak factorization structure and such that every morphism
\[\Psi X \xrightarrow{\epsilon_0 \times \epsilon_1} X \times X \]
is in $\alg{\reltofact  ( R)}$. Then the class $\coalg{\reltofact  ( R)}$ is stable under pullback along $\alg{\reltofact  ( R)}$.
\end{thm}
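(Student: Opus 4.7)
The plan is to mimic the proof of Theorem \ref{symthm} essentially verbatim. The strict hypotheses there were used only through Lemma \ref{standardfibration}, whose sole purpose was to ensure that every $\epsilon_0 \times \epsilon_1 : \Psi X \to X \times X$ lies in $\alg{\reltofact(R)}$; this conclusion is now part of our hypotheses, so the same construction should go through without needing either the strict transitivity or the strict symmetry exploited in Lemma \ref{standardfibration}. (Indeed, the symmetry of $R$ never enters directly into the argument of Theorem \ref{symthm} outside of that lemma.)

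Given a pullback square with $r \in \alg{\reltofact(R)}$ along the bottom and $\ell \in \coalg{\reltofact(R)}$ on the right, I would aim to solve the lifting problem expressing $\pi_X \in \coalg{\reltofact(R)}$. First, since $\ell \in \coalg{\reltofact(R)}$, there is a lift $a : Y \to \pullb{A}{\Psi Y}{\ell}{\epsilon_0}$ of $1_Y$ through $\rho_\ell$. Next, using that $\alg{\reltofact(R)}$ is stable under pullback and closed under composition (standard properties of the right class of a weak factorization system), the map $r \times 1_X$ lies in $\alg{\reltofact(R)}$ as a pullback of $r$; hence the composite
\[ r\epsilon_0 \times \epsilon_1 \;=\; (r \times 1_X) \circ (\epsilon_0 \times \epsilon_1) \;:\; \Psi X \to Y \times X \]
is also in $\alg{\reltofact(R)}$, by our standing hypothesis on $\epsilon_0 \times \epsilon_1$. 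Lifting $\lambda_r \in \coalg{\reltofact(R)}$ against this map then produces a filler $b : \pullb{X}{\Psi Y}{r}{\epsilon_0} \to \Psi X$.

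Finally, I would define the desired coalgebra structure $s$ on $\pi_X$ as the composite
\[ X \xrightarrow{\; ar \times 1_X\;} \pullb{A}{\Psi Y}{\ell}{\epsilon_0} \pullb{}{X}{\epsilon_1}{r} \xrightarrow{\;\cong\;} \pullb{X}{\Psi Y}{r}{\epsilon_0}\pullb{}{A}{\epsilon_1}{\ell} \xrightarrow{\;\pi_A \times b\;} \pullb{A}{\Psi X}{\ell}{r\epsilon_0}, \]
exactly as in the proof of Theorem \ref{symthm}, with the middle isomorphism the evident reshuffling of components. Verifying the two coalgebra equations (that $s\,\rho_{\pi_X} = 1_X$ and that $s$ factors $\lambda_{\pi_X}$ back through $\pi_X$) reduces, after projecting onto each factor, to the defining lifting properties of $a$ and $b$. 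The main obstacle is purely bookkeeping: tracking iterated fibre products and confirming that the reshuffling isomorphism matches the correct cones. Since no feature of the strict structure was essential to the construction of $b$ or $s$ in Theorem \ref{symthm} beyond the conclusion we now assume outright, the argument transfers with no substantive change.
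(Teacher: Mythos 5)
Your overall skeleton matches the paper's: the paper indeed proves Theorem \ref{symthmweak} by declaring the argument identical to that of Theorem \ref{symthm}. But there is a genuine gap at the one point where you deviate. You claim the symmetry of $R$ enters Theorem \ref{symthm} only via Lemma \ref{standardfibration}, and accordingly you replace the middle map of the composite defining $s$ by ``the evident reshuffling of components.'' In the paper that middle map is $\pi_X \times \iota_Y \times \pi_A$, and the $\iota_Y$ is essential: a point of $\pullb{A}{\Psi Y}{\ell}{\epsilon_0}\pullb{}{X}{\epsilon_1}{r}$ is a triple $(\alpha,\psi,x)$ with $\ell\alpha = \epsilon_0\psi$ and $\epsilon_1\psi = rx$, whereas a point of $\pullb{X}{\Psi Y}{r}{\epsilon_0}\pullb{}{A}{\epsilon_1}{\ell}$ must satisfy $rx = \epsilon_0\psi'$ and $\epsilon_1\psi' = \ell\alpha$. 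A bare permutation of the factors does not satisfy these conditions; one must also reverse the $\Psi Y$-component, i.e.\ apply a map $\iota_Y$ with $\epsilon_0\iota = \epsilon_1$ and $\epsilon_1\iota = \epsilon_0$. Without it your $s$ is not even a well-defined morphism into $\pullb{A}{\Psi X}{\ell}{r \epsilon_0}$. Moreover, the verification that $s\,\pi_X = \lambda_{\pi_X}$ uses $\iota\eta = \eta$ (together with $a\ell = \lambda_\ell$ and $b\lambda_r = \eta$), so the unit condition on the reversal is needed as well. Put concretely: the lift $a$ hands you a path in $Y$ running from $\ell\alpha$ to $rx$, while your lift $b$ consumes paths \emph{starting} at $rx$; the only way to feed one to the other is to reverse the path, and that is exactly what the symmetry supplies. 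The theorem with the symmetry hypothesis deleted (keeping only $\epsilon_0 \times \epsilon_1 \in \alg{\reltofact(R)}$) is not what your construction delivers.

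The repair is the observation the paper makes immediately after the definition of a symmetric relation: from $\nu$ one forms $\iota_X := \nu \circ (1 \times \eta\epsilon_0) : \Psi X \to \Psi X$, and the two diagrams for $\nu$ give $\epsilon_0\iota = \epsilon_1$, $\epsilon_1\iota = \epsilon_0$, and $\iota\eta = \eta$. (Unlike in the strict case this $\iota$ need not be invertible, but invertibility is never used.) Inserting this $\iota_Y$ into the middle map, your argument coincides with the paper's and goes through.
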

\begin{proof}
The proof for this is identical to that for Theorem \ref{symthm}.
\end{proof}

\begin{cor}\label{ttwfsweak}
Consider a transitive, symmetric relation $ R$ on $\C$ such that the factorization $ \reltofact  ( R)$ is a weak factorization structure. Then $\reltofact  ( R)$ is type-theoretic.
\end{cor}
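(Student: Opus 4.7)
The plan is to combine three results already established in this section. Recall that a weak factorization structure $\reltofact(R)$ is type-theoretic when (i) every object is fibrant and (ii) $\coalg{\reltofact(R)}$ is stable under pullback along $\alg{\reltofact(R)}$.

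First I would dispatch fibrancy. Since $\reltofact(R)$ is assumed to be a weak factorization structure, Proposition \ref{relfact2fib} immediately tells us that every object is fibrant in $[\reltofact(R)]$, so condition (i) of the definition of type-theoretic is met with no further work.

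Next I would establish condition (ii). The transitivity hypothesis together with the symmetry hypothesis puts us in the situation of Lemma \ref{standardfibrationweak}, which produces, for every object $X$ of $\C$, a $\reltofact(R)$-algebra structure on the morphism
\[\Psi X \xrightarrow{\epsilon_0 \times \epsilon_1} X \times X.\]
In particular, each such morphism belongs to $\alg{\reltofact(R)}$. This is exactly the hypothesis required to invoke Theorem \ref{symthmweak} (whose remaining hypothesis, symmetry of $R$, is assumed). That theorem then yields that $\coalg{\reltofact(R)}$ is stable under pullback along $\alg{\reltofact(R)}$, which is condition (ii).

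There is no real obstacle here; the corollary is a direct assembly of Proposition \ref{relfact2fib}, Lemma \ref{standardfibrationweak}, and Theorem \ref{symthmweak}, mirroring exactly the proof of the strict analog \ref{ttwfs} (with the weak versions replacing their strict counterparts). The only thing worth double-checking is that the hypotheses of \ref{standardfibrationweak} and \ref{symthmweak} line up verbatim with what we are given, which they do.
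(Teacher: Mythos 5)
Your proposal is correct and follows exactly the paper's own argument: the paper likewise combines Lemma \ref{standardfibrationweak} and Theorem \ref{symthmweak} to get stability of the left class under pullback along the right class, and cites Proposition \ref{relfact2fib} for fibrancy of all objects.
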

\begin{proof}
By the previous two results, we know that $\coalg{\reltofact  ( R)}$ is stable under pullback along $\alg{\reltofact  ( R)}$. By Proposition \ref{relfact2fib}, every object is fibrant. Thus, $\reltofact  ( R)$ is type-theoretic.
\end{proof}

\subsubsection{Summary}

Now we have the following theorem.

\begin{thm}\label{weakthm}
Consider a transitive and homotopical relation $ R$ on $\C$. Then $ {\reltofact }({  R})$ is a weak factorization structure.
\end{thm}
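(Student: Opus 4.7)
The plan is to observe that Theorem~\ref{weakthm} is an immediate consequence of the two main propositions proved earlier in Section~\ref{sec:wmrs}. Indeed, unpacking the definition, $\reltofact(R)$ is a weak factorization structure exactly when, for every morphism $f$ in $\C$, the map $\lambda_f$ lies in $\coalg{\reltofact(R)}$ and the map $\rho_f$ lies in $\alg{\reltofact(R)}$. Thus the proof reduces to invoking the pieces that together provide these two structural facts.

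First I would note that $\reltofact(R) = \iota_*\sigma_*(R)$ is a factorization in $\fact[\C]$ by construction (Construction~\ref{con:mappingpath} and the definition of $\iota_*$), so the only content of the theorem lies in the coalgebra/algebra conditions. Next, since $R$ is transitive, Proposition~\ref{weakrightalg} supplies, for every $f: X \to Y$ in $\C$, an explicit $\reltofact(R)$-algebra structure on $\rho_f$ built from $\mu_Y$; hence $\rho_f \in \alg{\reltofact(R)}$. Dually, since $R$ is homotopical, Proposition~\ref{weakleftalg} supplies, for every $f: X \to Y$ in $\C$, an explicit $\reltofact(R)$-coalgebra structure on $\lambda_f$ built from the morphisms $\delta_Y$ and $\tau_f$ of Definition~\ref{homotopical}; hence $\lambda_f \in \coalg{\reltofact(R)}$.

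Combining these two conclusions verifies the defining conditions for a weak factorization structure, so the result follows. There is no genuine obstacle remaining: the work has already been discharged in the two propositions, whose hypotheses are precisely transitivity and homotopicity of $R$. If I were expanding on any step, the only subtlety worth highlighting would be that the naturality of $\mu$, $\delta$, and $\tau$ is \emph{not} required here (they may be chosen pointwise), since $\coalg{\reltofact(R)}$ and $\alg{\reltofact(R)}$ are defined merely as classes of morphisms admitting a lift, without any coherence between the lifts at different $f$. That observation is what makes the non-strict hypotheses of Definitions~\ref{weaktrans} and~\ref{homotopical} already sufficient to conclude.
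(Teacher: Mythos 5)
Your proposal is correct and matches the paper's proof exactly: Theorem~\ref{weakthm} is proved by citing Proposition~\ref{weakleftalg} for the $\reltofact(R)$-coalgebra structures on the $\lambda_f$ and Proposition~\ref{weakrightalg} for the $\reltofact(R)$-algebra structures on the $\rho_f$. Your additional remark about naturality being unnecessary is accurate but not part of the paper's argument.
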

\begin{proof}
By Proposition \ref{weakleftalg}, every morphism in the image of $\lambda_{\reltofact  ( R)}$ has a $\reltofact  ( R)$-coalgebra structure, and by Proposition \ref{weakrightalg}, every morphism in the image of $\rho_{\reltofact  ( R)}$ has a $\reltofact  ( R)$-algebra structure.
\end{proof}

\begin{defn}
A \emph{Moore relation structure} on $\C$ is a relation $ R$ together with the structure given in the definitions of transitive, homotopical, and symmetric.
A \emph{Moore relation system} on $\C$ is a relation $ R$ together which is transitive, homotopical, and symmetric.
\end{defn}

Now we have the following theorem.

\begin{thm}\label{mrs2ttwfs}
Consider a Moore relation system $ R$ on $\C$. Then ${\reltofact }({  R})$ is a type-theoretic weak factorization structure.\end{thm}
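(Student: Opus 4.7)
The plan is to observe that this theorem is an immediate combination of the two main results already assembled in this subsection, so the proof requires no new ingredients and will be very short. A Moore relation system is by definition a relation that is simultaneously transitive, homotopical, and symmetric, and each of these three properties has been packaged to feed directly into one of the preceding results.

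First, I would invoke Theorem \ref{weakthm}: since $R$ is transitive and homotopical, we conclude that $\reltofact(R)$ is a weak factorization structure on $\C$. Concretely, transitivity supplies (via Proposition \ref{weakrightalg}) a $\reltofact(R)$-algebra structure on every morphism in the image of $\rho_{\reltofact(R)}$, while homotopicality supplies (via Proposition \ref{weakleftalg}) a $\reltofact(R)$-coalgebra structure on every morphism in the image of $\lambda_{\reltofact(R)}$; together these are exactly the conditions for a weak factorization structure.

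Second, with the weak factorization structure now in hand, I would invoke Corollary \ref{ttwfsweak}: since $R$ is also symmetric (and still transitive), and since $\reltofact(R)$ is now known to be a weak factorization structure, we conclude that $\reltofact(R)$ is type-theoretic. Behind the corollary, symmetry together with transitivity produces (via Lemma \ref{standardfibrationweak}) an algebra structure on each $\epsilon_0 \times \epsilon_1 \colon \Psi X \to X \times X$, and Theorem \ref{symthmweak} then yields stability of $\coalg{\reltofact(R)}$ under pullback along $\alg{\reltofact(R)}$; fibrancy of every object is automatic by Proposition \ref{relfact2fib}.

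There is no real obstacle here: the genuine work was done in setting up transitive, homotopical, and symmetric relations so that each axiom performs exactly the role needed in the preceding lemmas. The proof is simply the conjunction ``apply Theorem \ref{weakthm}, then apply Corollary \ref{ttwfsweak}.''
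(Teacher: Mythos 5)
Your proposal is correct and matches the paper's proof exactly: the paper likewise cites Theorem \ref{weakthm} (via transitivity and homotopicality) to get a weak factorization structure and then Corollary \ref{ttwfsweak} (via symmetry and transitivity) to conclude it is type-theoretic. Your additional unpacking of the supporting propositions is accurate but not needed beyond the two citations.
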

\begin{proof}
By the previous theorem, $\reltofact ({  R})$ is a weak factorization structure.
Then by Corollary \ref{ttwfsweak}, $\reltofact ({  R})$ is type-theoretic.
\end{proof}

\section{$\Id$-presentations from type-theoretic weak factorization systems}
\label{sec:bigdiagram}

In this section, we consider a type-theoretic weak factorization structure $W$ on a finitely complete category $\C$. In the first section, \ref{sec:bigdiagramsubsec}, we show that the factorization $\reltofact \facttorel(W)$ is again a weak factorization structure equivalent to $W$. In the second section, \ref{sec:idpres}, we show that the relation $\facttorel(W)$ is an $\Id$-presentation of $[\reltofact \facttorel (W)] = [W]$. Combining these two results, we will have shown that any type-theoretic weak factorization system has an $\Id$-presentation.

\subsection{The main tool}\label{sec:bigdiagramsubsec}

Consider any type-theoretic weak factorization structure $W$ on $\C$.
Our aim in this section is to show that  $\reltofact \facttorel (W)$ is equivalent to $W$. However, we prove a slightly more general result which will become useful later (in Lemma \ref{rel2idpres}, Proposition \ref{idpres2homo}, and Proposition \ref{Idtypesonobj2mor}).

To that end, consider any relation $R$ 
with the following components at each object $X$ of $\C$
\[ \diagram
 X   \ar[r]|-{\eta_X} &   R X  \ar@<1.5ex>[l]^-{\epsilon_{1X} } \ar@<-1.5ex>[l]_-{\epsilon_{0X} }
\enddiagram \]
such that 
each $\eta_X: X \to R X$ is in $\coalg{W}$ and each $\epsilon_X = \epsilon_{0X} \times \epsilon_{1X}: R X \to X \times X$ is in $\alg{W}$. (We have in mind the relation $\facttorel(W)$ for our main result.)


Now we show that $ \reltofact  ( R) $ is a weak factorization structure equivalent to $W$. For readability, we will let $\lambda$ denote $\lambda_{\reltofact  ( R)}$ and $\rho$ denote $\rho_{\reltofact  ( R)}$. We need to show that (1) $\coalg{\reltofact  ( R)} =  \coalg{W}$, (2) $\alg{\reltofact  ( R)} = \alg{W}$, (3) $\lambda_f \in \coalg{W}$, and (4) $\rho_f \in \alg{W}$ for every morphism $f$ of $\C$. These facts are all relatively straightforward to show except (3) which appears as Proposition \ref{lambdacoalg}.

The hypothesis that $W$ is type-theoretic is integral to the proof below. In Lemma \ref{algright}, where we show fact (4), we need every object in $W$ to be fibrant. In Lemma \ref{coalgleft}, which is used to show fact (3) in Proposition \ref{lambdacoalg}, we need $\coalg{W}$ to be stable under pullback along $\alg{W}$.

\begin{lem} \label{algright}
For any morphism $f$ of $\C$, the morphism $\rho_f$ is in $\alg{W}$.
\end{lem}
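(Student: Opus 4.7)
The plan is to exhibit $\rho_f$ as a composite of two morphisms, each of which lies in $\alg{W}$ by pullback stability, and then invoke closure of $\alg{W}$ under composition. Since $W$ is a weak factorization structure, Proposition~\ref{equivwfs} gives that $(\coalg{W}, \alg{W})$ is a weak factorization system, so $\alg{W}$ is a right class and hence closed under composition and under pullback along arbitrary morphisms \cite[Prop.~14.1.8]{MP12}.

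Recall from Construction~\ref{con:mappingpath} that $\rho_f: X \times_Y RY \to Y$ equals $\epsilon_{1Y} \pi_{RY}$. The naive decomposition here does not directly succeed, because $\pi_{RY}$ is the pullback of $f$ along $\epsilon_{0Y}$, and $f$ is not assumed to lie in $\alg{W}$. Instead, I would use the pullback square
\[ \diagram
X \times_Y RY \ar[r] \ar[d] \pullback & RY \ar[d]^{\epsilon_Y} \\
X \times Y \ar[r]^{f \times 1_Y} & Y \times Y
\enddiagram \]
in which $X \times_Y RY$ is identified with the pullback of $\epsilon_Y$ along $f \times 1_Y$, and the left vertical morphism is given by $\langle \pi_X, \epsilon_{1Y} \pi_{RY} \rangle$. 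The hypothesis $\epsilon_Y \in \alg{W}$ together with pullback stability then yields that this left vertical morphism lies in $\alg{W}$.

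Next, I would use that $W$ is type-theoretic: since all objects are fibrant, $Y \to *$ lies in $\alg{W}$, and pulling back along $X \to *$ gives that the second projection $\pi_Y: X \times Y \to Y$ is in $\alg{W}$. The composite of the left vertical map above with $\pi_Y$ equals $\rho_f$, so closure of $\alg{W}$ under composition finishes the proof. There is no substantive obstacle; the key observation is the pullback decomposition, which exploits the joint hypothesis $\epsilon_Y \in \alg{W}$ (rather than separate hypotheses on $\epsilon_{0Y}$ and $\epsilon_{1Y}$) together with the fibrancy of $Y$.
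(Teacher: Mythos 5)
Your proof is correct and takes essentially the same route as the paper's: the identical decomposition $\rho_f = \pi_Y \circ (1_X \times \epsilon_{1Y})$, with $1_X \times \epsilon_{1Y}$ obtained as the pullback of $\epsilon_{0Y} \times \epsilon_{1Y}$ along $f \times 1_Y$ and $\pi_Y$ obtained from fibrancy. (One cosmetic slip: $\pi_Y : X \times Y \to Y$ is the pullback of $X \to *$ along $Y \to *$, not of $Y \to *$ along $X \to *$, but since all objects are fibrant this is immaterial.)
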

\begin{proof}
Note first that $\pi_Y: X \times Y \to Y$ and $1_X \times  \epsilon_1  :\pullb{X}{R Y}{f}{\epsilon_0} \to X \times Y$ are in $\alg{W}$ because they are pullbacks of morphisms hypothesized to be in $\alg{W}$.
\begin{align*}
 \diagram
X \times Y \pullback \ar[r] \ar[d]^{\pi_Y} & X \ar[d]^{!} \\
Y \ar[r]^{!} & * 
\enddiagram & & \diagram
\pullb{X}{R Y}{f}{\epsilon_0}  \ar[d]^{1_X \times  \epsilon_1} \ar[r] \pullback & R  Y \ar[d]^{\epsilon_0 \times \epsilon_1} \\
X \times Y \ar[r]^{f \times 1_Y}& Y \times Y
\enddiagram 
\end{align*}
Since $\rho_f$ is the composition of these two maps, it is also in $\alg{W}$.
\end{proof}

\begin{lem} \label{coalgleft}
For any morphism $f$ in $\alg{W}$, the morphism $\lambda_f$ is in $\coalg{W}$,
\end{lem}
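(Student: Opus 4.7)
The plan is to exhibit $\lambda_f$ as a pullback of $\eta_Y$ along a morphism in $\alg{W}$, and then invoke the type-theoretic hypothesis (stability of $\coalg{W}$ under pullback along $\alg{W}$) to conclude.

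Concretely, consider a morphism $f:X \to Y$ in $\alg{W}$. The factorization of $f$ is
\[ \diagram
X \ar[r]^-{\lambda_f} & \pullb{X}{R Y}{f}{\epsilon_0} \ar[r]^-{\rho_f} & Y
\enddiagram \]
with $\lambda_f = 1_X \times \eta_Y f$. I claim that the square
\[ \diagram
X \ar[r]^-{\lambda_f} \ar[d]_f & \pullb{X}{R Y}{f}{\epsilon_0} \ar[d]^{\pi_{RY}} \\
Y \ar[r]^-{\eta_Y} & RY
\enddiagram \]
is a pullback. Commutativity is immediate from the definition of $\lambda_f$. For the universal property, a pair of morphisms $a : Z \to Y$ and $b : Z \to \pullb{X}{R Y}{f}{\epsilon_0}$ with $\eta_Y a = \pi_{RY} b$ forces $\pi_X b : Z \to X$ to satisfy $f \pi_X b = \epsilon_0 \pi_{RY} b = \epsilon_0 \eta_Y a = a$ and $\lambda_f (\pi_X b) = (\pi_X b, \eta_Y f \pi_X b) = (\pi_X b, \pi_{RY} b) = b$, so $\pi_X b$ is the unique factorization. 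Hence the square is a pullback.

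Next, observe that $\pi_{RY} : \pullb{X}{R Y}{f}{\epsilon_0} \to RY$ is by construction a pullback of $f \in \alg{W}$ along $\epsilon_{0Y} : RY \to Y$. Since the right class of any weak factorization system is stable under pullback (\cite[Prop.~14.1.8]{MP12}), we conclude $\pi_{RY} \in \alg{W}$.

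Now $\lambda_f$ is a pullback of $\eta_Y \in \coalg{W}$ along $\pi_{RY} \in \alg{W}$. Because $W$ is type-theoretic, $\coalg{W}$ is stable under pullback along $\alg{W}$, so $\lambda_f \in \coalg{W}$, as desired. The only non-routine step is recognizing the correct pullback square expressing $\lambda_f$ in terms of $\eta_Y$ and $\pi_{RY}$; once this is in place, the lemma follows from the two stability properties packaged into the definition of \emph{type-theoretic}.
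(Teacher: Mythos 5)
Your proof is correct and takes essentially the same route as the paper: both exhibit $\lambda_f$ as a pullback of $\eta_Y \in \coalg{W}$ along a morphism in $\alg{W}$ and then invoke the type-theoretic hypothesis. The only cosmetic difference is that the paper presents this as the base change of the arrow $\eta_Y$ along $f$ itself (a cube diagram), whereas you unfold that cube into the equivalent codomain-sharing pullback square along $\pi_{RY} \in \alg{W}$, which matches the literal form of the stability condition more directly.
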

\begin{proof}
The morphism $\lambda_f$ is a pullback of $\eta \in \coalg{W}$ along $f \in \alg{W}$,
\[ \diagram
& \pullb{X}{R Y}{f}{\epsilon_0} \ar[dd] \ar[rr] \pullback & & R Y \ar[dd]^{\epsilon_0} \\
X \ar[ur]^-{\lambda_f} \ar[rr] \ar@{=}[dr] {\ar@{}[drr]|<<<<<<<{\text{\pigpenfont A}}} &  & Y \ar[ur]^{\eta} \ar@{=}[dr] &  \\
& X  \ar[rr]^f  & & Y 
\enddiagram \]
and since $W$ is type-theoretic, $\coalg{W}$ is stable under pullback along $\alg{W}$.
\end{proof}

\begin{prop}\label{samelifting}
We have that $\coalg{W} = \coalg{\reltofact  ( R)}$ and $\alg{W} = \alg{\reltofact  ( R)}$. 
\end{prop}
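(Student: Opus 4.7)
The plan is to split each asserted equality of classes into two inclusions and handle the four inclusions by exploiting the standard duality between the lifting property and retract closure in a weak factorization system. By Proposition~\ref{equivwfs}, $(\coalg{W}, \alg{W})$ is itself a weak factorization system on $\C$, so both $\coalg{W}$ and $\alg{W}$ are closed under retracts and satisfy $\coalg{W} \boxslash \alg{W}$. These two features supply the two halves of each equality.

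For the easy direction $\coalg{W} \subseteq \coalg{\reltofact(R)}$, given $f \in \coalg{W}$ the coalgebra square for $\reltofact(R)$ has $f$ on its left-hand edge and $\rho_f$ on its right-hand edge. But $\rho_f \in \alg{W}$ by Lemma~\ref{algright}, so $\coalg{W} \boxslash \alg{W}$ yields the required diagonal. Dually, for $\alg{W} \subseteq \alg{\reltofact(R)}$, given $f \in \alg{W}$ Lemma~\ref{coalgleft} places $\lambda_f$ in $\coalg{W}$, and the same lifting property furnishes the algebra structure on $f$.

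The reverse inclusions use retracts. An $\reltofact(R)$-algebra structure on $f$ is a map $s: \Phi f \to X$ with $s\lambda_f = 1_X$ and $fs = \rho_f$; reading these alongside $\rho_f \lambda_f = f$ exhibits $f$ as a retract of $\rho_f$ in $\C^\M$. Since $\rho_f \in \alg{W}$ by Lemma~\ref{algright} and $\alg{W}$ is retract-closed, this gives $\alg{\reltofact(R)} \subseteq \alg{W}$. Dually, an $\reltofact(R)$-coalgebra structure on $f$ is a map $s:Y \to \Phi f$ with $sf = \lambda_f$ and $\rho_f s = 1_Y$, which exhibits $f$ as a retract of $\lambda_f$; so $\coalg{\reltofact(R)} \subseteq \coalg{W}$ will follow provided $\lambda_f \in \coalg{W}$ for every morphism $f$.

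That last hypothesis is the main obstacle: Lemma~\ref{coalgleft} only delivers $\lambda_f \in \coalg{W}$ when $f$ is already in $\alg{W}$, and for the retract argument we need the stronger statement for arbitrary $f$. This is precisely the content of the forthcoming Proposition~\ref{lambdacoalg}, and it is where the type-theoretic hypothesis on $W$ (that $\coalg{W}$ is stable under pullback along $\alg{W}$) does the real work. Granting Proposition~\ref{lambdacoalg}, the fourth inclusion completes the proof.
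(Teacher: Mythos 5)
Three of your four inclusions coincide with the paper's argument: $\alg{\reltofact(R)} \subseteq \alg{W}$ via the retract of $\rho_f$ and Lemma~\ref{algright}; $\alg{W} \subseteq \alg{\reltofact(R)}$ by lifting against $\lambda_f \in \coalg{W}$ from Lemma~\ref{coalgleft}; and $\coalg{W} \subseteq \coalg{\reltofact(R)}$ by lifting against $\rho_f \in \alg{W}$. The problem is the fourth inclusion, $\coalg{\reltofact(R)} \subseteq \coalg{W}$, which you defer to the ``forthcoming'' Proposition~\ref{lambdacoalg}. That deferral is circular relative to the paper's development: the proof of Proposition~\ref{lambdacoalg} produces a lift exhibiting $\lambda_f$ as a $\reltofact(R)$-coalgebra, i.e.\ it shows $\lambda_f \in \coalg{\reltofact(R)}$, and then concludes ``therefore $\lambda_f$ is in $\coalg{W}$'' --- a step that is precisely the inclusion $\coalg{\reltofact(R)} \subseteq \coalg{W}$ you are trying to establish. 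So, as written, your proof of the fourth inclusion rests on a result whose own proof rests on that inclusion.

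The repair is both simpler and is what the paper does: you do not need $\lambda_f \in \coalg{W}$ for arbitrary $f$ at this stage. Having already established $\alg{W} = \alg{\reltofact(R)}$, take $\ell \in \coalg{\reltofact(R)}$; by Proposition~\ref{coalgalglift} it has the left lifting property against all of $\alg{\reltofact(R)} = \alg{W}$, and since $(\coalg{W}, \alg{W})$ is a weak factorization system (Proposition~\ref{equivwfs}), we have $\coalg{W} = {^\boxslash(\alg{W})}$, so $\ell \in \coalg{W}$. (One could in principle rescue your retract route by first proving Proposition~\ref{lambdacoalg} using only the equality $\alg{W} = \alg{\reltofact(R)}$ together with Proposition~\ref{coalgalglift}, thereby breaking the circle, but that imports the hardest argument of the section into a place where it is not needed.)
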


\begin{proof}
Consider a morphism $f$ in $\alg{\reltofact  ( R)}$. It is a retract of $\rho_f $. By Lemma \ref{algright}, $\rho_f $ is in $\alg{W}$. Since $\alg{W}$ is closed under retracts \cite[Prop.~14.1.8]{MP12}, $f$ is in $\alg{W}$.

Now consider a morphism $f$ in $\alg{W}$. Since $\lambda_ f$ is in $\coalg{W}$ by Lemma \ref{coalgleft}, $\lambda_f$ has the left lifting property against $f$. Therefore, $f$ is in $\alg{\reltofact  ( R)}$.

Thus, $\alg{W} = \alg{\reltofact  ( R)}$.

Now consider $\ell \in \coalg{W}$. Since $\ell$ has the left lifting property against $\alg{W}$, it has the left lifting property against $\rho_\ell$ in particular (Lemma \ref{algright}). Thus it is in $\coalg{\reltofact  ( R)}$.

Now suppose that $\ell \in \coalg{\reltofact  ( R)}$. Then for any $r \in \alg{W} = \alg{\reltofact  ( R)}$, $\ell$ has the left-lifting property against $r$ (Proposition \ref{coalgalglift}). Thus, $\ell$ is in $^\boxslash( \alg{W} )= \coalg{W}$.

Therefore, $\coalg{W} = \coalg{\reltofact  ( R)}$.
\end{proof}

\begin{prop}\label{lambdacoalg}
For any morphism $f$ of $\C$, the morphism $\lambda(f)$ is in $\coalg{W}$.
\end{prop}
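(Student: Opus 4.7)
The plan is to reduce the general case to Lemma~\ref{coalgleft} by factoring $f$ through the weak factorization structure $W$. Specifically, factor $f = p \ell$ via $W$, so that $\ell: X \to Z$ lies in $\coalg{W}$ and $p: Z \to Y$ lies in $\alg{W}$. Lemma~\ref{coalgleft} immediately yields $\lambda_p \in \coalg{W}$, and this is the only direct use of the hypothesis of type-theoreticity beyond what is already packaged in Lemma~\ref{coalgleft}.

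Next, I will verify directly that the square
\[ \diagram
X \ar[r]^-{\ell} \ar[d]_-{\lambda_f} & Z \ar[d]^-{\lambda_p} \\
X \times_Y RY \ar[r]_-{\ell \times 1_{RY}} & Z \times_Y RY
\enddiagram \]
is a pullback, exhibiting $\lambda_f$ as a pullback of $\lambda_p$ along $\ell$. This follows from the pasting law, since $X \times_Y RY = X \times_Z (Z \times_Y RY)$. Moreover, $\pi_Z: Z \times_Y RY \to Z$ is a pullback of $\epsilon_0 \in \alg{W}$ along $p$, so $\pi_Z \in \alg{W}$; then $\ell \times 1_{RY}$, being the pullback of $\ell \in \coalg{W}$ along $\pi_Z \in \alg{W}$, is itself in $\coalg{W}$ by the type-theoretic hypothesis on $W$. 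In particular, $\lambda_p \circ \ell = (\ell \times 1_{RY}) \circ \lambda_f$ lies in $\coalg{W}$ (composition of coalgebras).

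To finish, I will show $\lambda_f \in \coalg{W} = {^\boxslash \alg{W}}$ (invoking Prop.~\ref{samelifting}) by constructing a lift for every square against $r \in \alg{W}$. Given $\alpha: X \to A$ and $\beta: X \times_Y RY \to B$ with $r\alpha = \beta \lambda_f$, the strategy is to build compatible extensions $\tilde\alpha: Z \to A$ of $\alpha$ along $\ell$ and $\tilde\beta: Z \times_Y RY \to B$ of $\beta$ along $\ell \times 1_{RY}$ satisfying $r \tilde\alpha = \tilde\beta \lambda_p$; then the coalgebra property $\lambda_p \boxslash r$ supplies a lift $\tilde\sigma: Z \times_Y RY \to A$ of the enlarged problem, and $\sigma := \tilde\sigma \circ (\ell \times 1_{RY})$ solves the original square, as one verifies by chasing: $\sigma \lambda_f = \tilde\sigma \lambda_p \ell = \tilde\alpha \ell = \alpha$ and $r \sigma = \tilde\beta (\ell \times 1_{RY}) = \beta$.

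The main obstacle I expect is orchestrating the two extensions so that $r\tilde\alpha = \tilde\beta \lambda_p$ holds along with the restriction constraints $\tilde\alpha \ell = \alpha$ and $\tilde\beta(\ell \times 1_{RY}) = \beta$. The natural way to arrange this is to apply the liftings in a careful order: first use $\ell \times 1_{RY} \boxslash r$ to produce $\tilde\beta$ extending $\beta$ (the required square being set up so that its top-right boundary is a fixed map into $A$ we have access to), and then use $\ell \boxslash r$ with bottom map $\tilde\beta \lambda_p$ to produce $\tilde\alpha$ — the commutativity of that second square ($r\alpha = \tilde\beta \lambda_p \ell$) being automatic from $r\alpha = \beta\lambda_f = \tilde\beta(\ell \times 1_{RY})\lambda_f = \tilde\beta \lambda_p \ell$. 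Getting the first lift correctly set up so that its restriction along $\ell \times 1_{RY}$ reproduces $\beta$ (rather than just some extension of it) is the technical crux.
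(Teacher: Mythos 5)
Your route is genuinely different from the paper's and is salvageable, but as written it has a gap exactly where you flag ``the technical crux.'' The preliminary reductions are fine: the square exhibiting $\lambda_f$ as the pullback of $\lambda_p$ along $\ell\times 1_{RY}$ commutes and is indeed a pullback (paste it with the pullback square whose verticals are $\pi_X$ and $\pi_Z$ and use the cancellation form of the pasting law), and $\ell\times 1_{RY}\in\coalg{W}$ for the reason you give --- though note that the standing hypothesis is only that $\epsilon_0\times\epsilon_1\in\alg{W}$, so to get $\pi_Z\in\alg{W}$ you must first write $\epsilon_0=\pi_0\circ(\epsilon_0\times\epsilon_1)$ and use that the projection $\pi_0:Y\times Y\to Y$ is a pullback of the map $Y\to *$, which lies in $\alg{W}$ by fibrancy, exactly as in Lemma~\ref{algright}. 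The genuine problem is the construction of $\tilde\beta$. You propose to obtain $\tilde\beta:Z\times_Y RY\to B$ from the lifting property $\ell\times 1_{RY}\boxslash r$, but a lift of $\ell\times 1_{RY}$ against $r$ is a map into $A$, not into $B$, and to even pose that lifting problem you would already need a map $Z\times_Y RY\to B$ on the bottom edge --- which is precisely the $\tilde\beta$ you are trying to build. As stated, the step is circular.

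The repair uses the type-theoretic hypothesis once more: all objects are fibrant, so $B\to *$ is in $\alg{W}$, and lifting $\ell\times 1_{RY}\in\coalg{W}$ against $B\to *$ extends $\beta$ to some $\tilde\beta$ with $\tilde\beta(\ell\times 1_{RY})=\beta$. Then $\tilde\alpha$ is obtained as a lift of $\ell$ against $r$ with top $\alpha$ and bottom $\tilde\beta\lambda_p$ (that square commutes since $r\alpha=\beta\lambda_f=\tilde\beta(\ell\times 1_{RY})\lambda_f=\tilde\beta\lambda_p\ell$), and the lift of $(\tilde\alpha,\tilde\beta):\lambda_p\to r$ restricts along $\ell\times 1_{RY}$ to a solution of the original problem, as you verify. (What you are really proving is that in a weak factorization system with all objects fibrant the left class is closed under left cancellation: if $g$ and $gh$ are left maps, so is $h$; your endgame is the instance $h=\lambda_f$, $g=\ell\times 1_{RY}$, $gh=\lambda_p\ell$.) With that repair the argument is complete and is quite unlike the paper's proof, which never factors $f$: the paper instead constructs a concatenation $\mu$ on $RY$ by a lifting argument and assembles an explicit $\reltofact(R)$-coalgebra structure on $\lambda_f$ from the coalgebra structure on $\lambda_{\rho_f}$. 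Your version trades that explicit construction for one extra appeal to fibrancy, which is arguably cleaner.
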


\begin{proof}

We need to show that $\lambda_f$ has a $\lambda$-coalgebra structure, or that, equivalently, there is a solution to the following lifting problem.

\[ \diagram
X \ar[d]^{\lambda_f} \ar[r]^-{\lambda_{\lambda_f}}& \pullb{X}{R ( \pullb{X}{R Y}{f}{\epsilon_0})}{\lambda f}{\epsilon_0} \ar[d]^{\rho_{ \lambda_ f}} \\
\pullb{X}{R Y}{f}{\epsilon_0} \ar@{-->}[ur] \ar@{=}[r]& \pullb{X}{R Y}{f}{\epsilon_0} 
\enddiagram \]

First we define a new morphism $\mu: \pullb{R Y}{R Y}{\epsilon_1}{\epsilon_0} \to R Y$. Note that $\eta \epsilon_0 \times 1: R Y \to  \pullb{R Y}{R Y}{\epsilon_1}{\epsilon_0}$ is in $\coalg{W}$ since it is a pullback of a morphism in $\coalg{W}$ along a morphism in $\alg{W}$, as shown below.

\[ \diagram
& \pullb{R Y}{R Y}{\epsilon_1}{\epsilon_0} \ar[dd] \ar[rr] \pullback & & R Y \ar[dd]^{\epsilon_1} \\
R Y \ar[ur]^-{\eta \epsilon_0 \times 1} \ar[rr] \ar@{=}[dr] {\ar@{}[drr]|<<<<<<<{\text{\pigpenfont A}}} &  & Y \ar[ur]^{\eta} \ar@{=}[dr] &  \\
& R Y  \ar[rr]^{\epsilon_0} & & Y 
\enddiagram \]
Then, we define $\mu$ to be a solution to the following lifting problem.
\[ \diagram
R Y \ar@{=}[rr] \ar[d]^{\eta \epsilon_0 \times 1} & &  R Y \ar[d]^{\epsilon_0 \times \epsilon_1}  \\
 \pullb{R Y}{R Y}{\epsilon_1}{\epsilon_0}  \ar@{-->}[urr]^{\mu} \ar[rr]^-{\epsilon_0 \pi_0 \times \epsilon_1 \pi_1}&& Y \times Y
\enddiagram \]



\begin{sidewaysfigure}
\[ \diagram
X \ar[ddd]|\hole_{\lambda_f}|{=}^{1 \times \eta f} \ar[rr]_-{1 \times \eta f} \ar@/^4ex/[rrrrrrrr]^{\eta \lambda_f = \eta(1_X \times \eta f)}
&& \pullb{X}{R Y}{f}{\epsilon_0} \ar[ddd]|\hole_{\lambda_{ \rho_ f} }|{=}^{ 1_{X \times R Y} \times \eta \epsilon_1}  \ar[rrrr]_-{ \eta(1_{X \times R Y} \times \eta \epsilon_1) } 
&&&& R (\pullb{X}{R Y}{f}{\epsilon_0} \pullb{}{R Y}{\epsilon_1}{\epsilon_0}) \ar[ddd]^{ \epsilon_0 \times \epsilon_1 } \ar[rr]_-{R (1_X \times \mu)}
&&  R ( \pullb{X}{R Y}{f}{\epsilon_0}) \ar[ddd]^{\epsilon_0 \times \epsilon_1}
\\
\\
\\
\pullb{X}{R Y}{f}{\epsilon_0} \ar[rr]^-{1_X \times \eta f \times 1_{R Y}} \ar@/_4ex/[rrrrrrrr]_{(1_X \times \eta f) \times (1_{X \times R Y})}
&& \pullb{X}{R Y}{f}{\epsilon_0} \pullb{}{R Y}{\epsilon_1}{\epsilon_0} \ar[rrrr]^-{(1_{X \times R Y} \times \eta \epsilon_1) \times (1_{X \times R Y \times R Y})} \ar@{-->}[uuurrrr]^{\sigma}
&&&& (\pullb{X}{R Y}{f}{\epsilon_0} \pullb{}{R Y}{\epsilon_1}{\epsilon_0})^2 \ar[rr]^-{(1_X \times \mu)^2}
&& (\pullb{X}{R Y}{f}{\epsilon_0} )^2
\enddiagram \]
\caption{Lifting diagram}\label{figure:big_diagram} 
\end{sidewaysfigure}

Now we refer to figure Figure~\ref{figure:big_diagram} on page~\pageref{figure:big_diagram}. Since $\rho_f$ is in $\alg{W}$, we know that $\lambda_{\rho_f}$ is in $\coalg{W}$. Therefore, there is a lift $\sigma$ as illustrated in the figure.

Let $\sigma':  \pullb{X}{R Y}{f}{\epsilon_0}  \to R(\pullb{X}{R Y}{f}{\epsilon_0} )$ be the composite $R(1_X \times \mu) \sigma (1_X \times \eta f \times 1_{R Y})$ -- that is, the composite from the bottom left to top right of the diagram in Figure~\ref{figure:big_diagram}. Then a rearrangement of Figure~\ref{figure:big_diagram} produces the commutative diagram below, and $1_X \times \sigma'$ is our desired lift.
\[\diagram
X \ar[rr]^-{\lambda_{\lambda_ f} =}_-{ 1_X \times \eta(1_X \times \eta f)}
\ar[dd]|\hole_{\lambda_f}|{=}^{1 \times \eta f}
&& \pullb{X}{R ( \pullb{X}{R Y}{f}{\epsilon_0})}{ \lambda_f}{\epsilon_0} \ar[dd]|\hole_{\rho_{\lambda_ f}}|=^{\epsilon_1 \pi_{R(X \times R Y)}}
\\ \\
\pullb{X}{R Y}{f}{\epsilon_0} \ar@{=}[rr] \ar[uurr]^{1_X \times \sigma'}
&& \pullb{X}{R Y}{f}{\epsilon_0}
\enddiagram \]

Therefore, $\lambda_f$ is in $\coalg{W}$. \qedhere
\end{proof}

We put the preceding results together into the following theorems.

\begin{thm}\label{wfs2wfs}
Consider a type-theoretic weak factorization structure $W$ on $\C$. Consider a relation $R$ on $\C$ which has components 
\[ \diagram
 X   \ar[r]|-{\eta_X} &   R X , \ar@<1.5ex>[l]^-{\epsilon_{1X} } \ar@<-1.5ex>[l]_-{\epsilon_{0X} }
\enddiagram \]
such that $\eta_X$ is $\coalg{W}$ and $\epsilon_{0X} \times \epsilon_{1X} : R(X) \to X \times X$ is in $\alg{W}$ at each object $X$ of $\C$.
Then the factorization $ \reltofact  (R) $ is a weak factorization structure equivalent to $W$. 
\end{thm}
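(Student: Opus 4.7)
The plan is to show that the theorem is essentially a synthesis of the four preceding results (Lemma \ref{algright}, Lemma \ref{coalgleft}, Proposition \ref{samelifting}, and Proposition \ref{lambdacoalg}), combined with the characterization of weak factorization systems as isomorphism classes in $|\WFS[\C]|$ (Proposition \ref{equivwfs}).

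First I would verify that $\reltofact(R)$ is a weak factorization structure. By definition, this requires that for every morphism $f$ of $\C$, we have $\lambda_f \in \coalg{\reltofact(R)}$ and $\rho_f \in \alg{\reltofact(R)}$. Proposition \ref{lambdacoalg} gives $\lambda_f \in \coalg{W}$, and Lemma \ref{algright} gives $\rho_f \in \alg{W}$. Proposition \ref{samelifting} then identifies $\coalg{W} = \coalg{\reltofact(R)}$ and $\alg{W} = \alg{\reltofact(R)}$, so these containments transfer across.

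Next I would establish the equivalence $[\reltofact(R)] = [W]$ in $|\WFS[\C]|$. By Proposition \ref{equivwfs}, two weak factorization structures lie in the same isomorphism class of the proset truncation $|\WFS[\C]|$ precisely when the underlying weak factorization systems $(\coalg{-}, \alg{-})$ agree. Proposition \ref{samelifting} again supplies this: since the classes of coalgebras and algebras coincide, the weak factorization structures are equivalent.

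There is no genuine obstacle here: essentially all of the content has been distilled into the earlier lemmas and propositions, and the work of this theorem consists in verifying that the hypotheses of those results are exactly the ones in the statement (namely the componentwise membership $\eta_X \in \coalg{W}$ and $\epsilon_{0X} \times \epsilon_{1X} \in \alg{W}$, which is precisely what Lemmas \ref{algright}--\ref{coalgleft} and Propositions \ref{samelifting}--\ref{lambdacoalg} assume throughout the subsection). The only subtlety worth flagging explicitly is that Proposition \ref{lambdacoalg} requires $W$ to be type-theoretic (both the fibrancy of all objects, used in Lemma \ref{algright}, and pullback-stability of $\coalg{W}$ along $\alg{W}$, used in Lemma \ref{coalgleft}), so the type-theoretic hypothesis on $W$ is indispensable for the conclusion and should be cited when we invoke these results.
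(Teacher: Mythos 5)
Your proposal is correct and follows essentially the same route as the paper's own proof: both deduce that $\reltofact(R)$ is a weak factorization structure by combining Lemma \ref{algright} and Proposition \ref{lambdacoalg} with the identification of lifting classes in Proposition \ref{samelifting}, and both obtain the equivalence with $W$ from Propositions \ref{equivwfs} and \ref{samelifting}. Your remark about where the type-theoretic hypothesis enters (fibrancy in Lemma \ref{algright}, pullback-stability in Lemma \ref{coalgleft}) matches the paper's own discussion preceding those lemmas.
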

\begin{proof}
By Lemma \ref{algright} and Proposition \ref{samelifting}, every morphism in the image of $\rho_{ \reltofact  (R)}$ is in $\alg{ \reltofact  (R)}$. By Proposition \ref{lambdacoalg} and Proposition \ref{samelifting}, every morphism in the image of $\lambda_{ \reltofact  (R)}$ is in $\coalg{ \reltofact  (R)}$.
 Thus, $ \reltofact  (R)$ is a weak factorization structure. By Propositions \ref{equivwfs} and \ref{samelifting}, it is equivalent to $W$.
\end{proof}

The following corollary is the main result of this section.

\begin{cor}\label{wfs2wfscor}
Consider a type-theoretic weak factorization structure $W$ on $\C$. The factorization $ \reltofact \facttorel(W)$ is a weak factorization structure equivalent to $W$. 
\end{cor}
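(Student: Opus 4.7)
The plan is to derive this corollary as an immediate application of Theorem \ref{wfs2wfs} with $R := \facttorel(W)$. To invoke that theorem I need to verify its two hypotheses at each object $X$ of $\C$: that the unit $\eta_X$ lies in $\coalg{W}$, and that the pairing $\epsilon_{0X} \times \epsilon_{1X}$ lies in $\alg{W}$.

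First I would unpack what $\facttorel = \sigma^* \iota^*$ yields on an object $X$. By Construction \ref{con:fact2relfact}, $\iota^*(W)$ evaluated at $1_X: X \to X$ is the relational factorization built from $W(1_X \times 1_X)$, i.e., from applying $W$ to the diagonal $\Delta_X: X \to X \times X$. The functor $\sigma^*$ then pulls this datum back along $\C^!: \C \to \C^\M$, which amounts to restricting relational factorizations to identity morphisms. Tracing through the definitions, $\facttorel(W)(X)$ is the relation whose unit $\eta_X$ equals $\lambda_W(\Delta_X)$ and whose components $\epsilon_{0X}, \epsilon_{1X}$ are the two coordinate projections of $\rho_W(\Delta_X)$, so that the pairing $\epsilon_{0X} \times \epsilon_{1X}$ is $\rho_W(\Delta_X)$ itself.

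Since $W$ is a weak factorization structure, by definition every morphism in the image of $\lambda_W$ carries a $W$-coalgebra structure and every morphism in the image of $\rho_W$ carries a $W$-algebra structure. Applied at $\Delta_X$, this gives the two desired memberships $\eta_X \in \coalg{W}$ and $\epsilon_{0X} \times \epsilon_{1X} \in \alg{W}$. Theorem \ref{wfs2wfs} then delivers the conclusion: $\reltofact \facttorel(W)$ is a weak factorization structure equivalent to $W$.

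There is no genuine obstacle here; the substantial content has already been absorbed into Theorem \ref{wfs2wfs}, where the type-theoretic hypothesis on $W$ was used (through stability of $\coalg{W}$ under pullback along $\alg{W}$ and fibrancy of all objects). The present corollary is merely the recognition that $\facttorel(W)$ is a valid input to that theorem for any weak factorization structure $W$.
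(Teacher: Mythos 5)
Your proposal is correct and follows exactly the paper's own argument: identify $\facttorel(W)(X)$ as the relation extracted from $W(\Delta_X)$, note that $\eta_X = \lambda_W(\Delta_X) \in \coalg{W}$ and $\epsilon_{0X}\times\epsilon_{1X} = \rho_W(\Delta_X) \in \alg{W}$ by the definition of a weak factorization structure, and invoke Theorem \ref{wfs2wfs}. The only difference is that you spell out the unpacking of $\facttorel = \sigma^*\iota^*$ in slightly more detail than the paper does.
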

\begin{proof}
We need to show that the relation $ \facttorel(W)$ can be substituted for $R $ in the statement of the previous theorem, \ref{wfs2wfs}. In the notation of the previous theorem, \ref{wfs2wfs}, $\eta_X$ is $\lambda_{W}({\Delta_X})$ and $\epsilon_X$ is $\rho_W({\Delta_X})$, and these are in the left and right class, respectively, as required.
\end{proof}

The following corollary will become a useful technical device (in Proposition \ref{idpres2homo}) and is the reason that we proved Theorem \ref{wfs2wfs} in more generality than needed for Corollary \ref{wfs2wfscor}.

\begin{cor}\label{newfact}
Consider a type-theoretic weak factorization structure $W$ on $\C$. Consider a relation on just one object $Y$ of $\C$ with the following components
\[ \diagram
 Y   \ar[r]|-{\eta_Y} &   R Y , \ar@<1.5ex>[l]^-{\epsilon_{1Y} } \ar@<-1.5ex>[l]_-{\epsilon_{0Y} }
\enddiagram \]
such that $\eta_Y$ is in the left class and $\epsilon_{0Y} \times \epsilon_{1Y}: RY \to Y \times Y$ is in the right class of $W$. Then for any morphism $f: X \to Y$ of $\C$, in the following factorization
\[ \diagram
X \ar[r]^-{1 \times \eta f}& X \times_{\epsilon_0 } RY \ar[r]^-{\epsilon_1 \pi_{RY}}& Y
\enddiagram \]
the morphism $1 \times \eta f$ is in the left class, and $\epsilon_1 \pi_{RY}$ is in the right class of $W$.
\end{cor}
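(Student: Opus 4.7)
The plan is to apply the reasoning of Theorem \ref{wfs2wfs} locally. Although that theorem hypothesizes a full relation on $\C$, its proof decomposes into Lemma \ref{algright} (showing $\rho(f) \in \alg{W}$) and Proposition \ref{lambdacoalg} (showing $\lambda(f) \in \coalg{W}$), and inspection shows that for a fixed morphism $f: X \to Y$ each argument invokes the data $\eta, \epsilon_0, \epsilon_1$ only at the codomain object $Y$. Thus the restricted hypothesis of this corollary — just $\eta_Y \in \coalg{W}$ and $\epsilon_{0Y} \times \epsilon_{1Y} \in \alg{W}$ — is enough to replay both arguments for any $f: X \to Y$.

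First I would verify the right factor: $\epsilon_1 \pi_{RY}$ is the composite of $1_X \times \epsilon_{1Y}: X \times_Y RY \to X \times Y$, which is the pullback of $\epsilon_{0Y} \times \epsilon_{1Y} \in \alg{W}$ along $f \times 1_Y$, with $\pi_Y: X \times Y \to Y$, which is the pullback of $!_X \in \alg{W}$ along $!_Y$ (using that $W$ is type-theoretic, so every object is fibrant). Both factors lie in $\alg{W}$, hence so does the composite.

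Next I would rerun the argument of Proposition \ref{lambdacoalg} for the left factor. Construct an ad hoc transitivity $\mu: RY \times_Y RY \to RY$ as a lift of $\epsilon_{0Y}\pi_0 \times \epsilon_{1Y}\pi_1$ against $\epsilon_{0Y} \times \epsilon_{1Y}$, where the left side of the lifting square is $\eta_Y \epsilon_{0Y} \times 1_{RY}$; this morphism is the pullback of $\eta_Y \in \coalg{W}$ along $\epsilon_{1Y} \in \alg{W}$ (writing $\epsilon_{1Y} = \pi_Y \circ (\epsilon_{0Y} \times \epsilon_{1Y})$ to see membership in $\alg{W}$), and so is in $\coalg{W}$ by the type-theoretic property. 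Then the big lifting diagram from the proof of Proposition \ref{lambdacoalg} exhibits a $\reltofact(R)$-coalgebra structure on $\lambda(f) = 1 \times \eta_Y f$ — the only extra input needed is that $\lambda_{\rho_f}$ is in $\coalg{W}$, which follows from Lemma \ref{coalgleft} applied to $\rho_f \in \alg{W}$ at the object $Y$. Finally the argument of Proposition \ref{samelifting} (a $\reltofact(R)$-coalgebra has the left lifting property against every map of $\alg{W}$, hence lies in $\coalg{W}$) yields the desired conclusion.

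The main obstacle is purely bookkeeping: tracking through the proof of Proposition \ref{lambdacoalg} to confirm that every occurrence of the relation data is at $Y$, the codomain of $f$, so that the restricted input suffices. This is routine once one types all the morphisms carefully, but it is the substantive content of the proof.
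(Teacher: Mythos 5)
Your right-factor argument is fine, and your construction of $\mu$ at $Y$ matches the one in the paper. The gap is in the central claim that the proof of Proposition \ref{lambdacoalg} ``invokes the data $\eta, \epsilon_0, \epsilon_1$ only at the codomain object $Y$.'' It does not. The coalgebra structure constructed there is a morphism $Mf \to M\lambda_f = \pullb{X}{R(Mf)}{\lambda_f}{\epsilon_0}$, so one already needs the relation at the object $Mf = \pullb{X}{RY}{f}{\epsilon_0}$ just to write down its codomain; the lift $\sigma$ in Figure \ref{figure:big_diagram} lands in $R(M\rho_f)$ and is produced against $\epsilon_0 \times \epsilon_1$ at $M\rho_f$, so one also needs the relation, with $\epsilon_0 \times \epsilon_1$ in $\alg{W}$, at that object; and the composite $\sigma'$ uses $R(1_X \times \mu)$, the action of the relation on a morphism $M\rho_f \to Mf$, so one even needs the afunctorial action on arrows. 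Finally, the inference ``a $\reltofact(R)$-coalgebra lies in $\coalg{W}$'' passes through Propositions \ref{coalgalglift} and \ref{samelifting}, which presuppose a factorization defined on squares of $\C^\M$, not just at the single arrow $f$. None of this data is supplied by a relation on the one object $Y$, so the ``bookkeeping'' you defer is exactly where the argument breaks.

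The paper closes this gap by gluing: it sets $S(Y) = R$ and $S(X) = \facttorel(W)(X)$ for $X \neq Y$, manufactures the missing action of $S$ on morphisms into and out of $Y$ by a lifting argument ($\eta \in \coalg{W}$ against $\epsilon_0 \times \epsilon_1 \in \alg{W}$), and then applies Theorem \ref{wfs2wfs} to $S$ wholesale. Your proposal can be repaired in the same spirit --- the proof of Proposition \ref{lambdacoalg} only needs the \emph{hypotheses} ($\eta \in \coalg{W}$, $\epsilon_0 \times \epsilon_1 \in \alg{W}$) at the objects it touches, and $\facttorel(W)$ supplies such data everywhere else --- but as written, the assertion that the single-object input suffices is false, and supplying the extra data is the substantive content of the corollary rather than a routine check.
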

\begin{proof}
Consider the relation $ \facttorel(W)$. We construct a new relation $S$ which coincides with $ \facttorel(W)$ everywhere except at $Y$. So set $S(X) =  \facttorel(W)(X)$ for every object $X \neq Y$ and set $S(Y) = R$. Then a lift of any morphism with domain or codomain $Y$ can be extracted from the weak factorization structure $W$. That is, a lift of any morphism $f: X \to Y$ can be obtained as a solution to the following lifting problem.
\[ \diagram
X \ar[r]^{\eta f} \ar[d]^{\eta}& \Id(Y) \ar[d]^{\epsilon_0 \times \epsilon_1}\\
\Id(X) \ar[r]^{f\epsilon_0 \times f\epsilon_1} \ar@{-->}[ur]&Y \times Y
\enddiagram \]
A lift of any morphism $g: Y \to Z$ can be obtained analogously.

The relation $S$ satisfies the hypotheses of Theorem \ref{wfs2wfs} so $\reltofact ( S) $ is a weak factorization structure equivalent to $W$. But $\reltofact ( S)  $ sends a morphism $f: X \to Y$ to the factorization in the statement. Thus $1 \times \eta f$ is in the left class and $\epsilon_1 \pi_{RY}$ is in the right class of $W$.
\end{proof}

\begin{rem}
Some might be opposed to the reference of equality of objects. However, it is not strictly necessary. One can emulate the proof of Theorem \ref{wfs2wfs}, replacing $\lambda_f$ with the $1 \times \eta f$ of the statement of Corollary \ref{newfact} and replacing the other occurences of $\lambda$ and $\rho$ with $\lambda_{\reltofact \facttorel(W)}$ and $\rho_{\reltofact \facttorel(W)}$ (essentially, making the replacement of $\facttorel(W)(Y)$ by $R(Y)$, as is done in the proof of Corollary \ref{newfact}, just where necessary in the proof of Theorem \ref{wfs2wfs}). Then we will obtain Corollary \ref{newfact}.
\end{rem}

\subsection{$\Id$-presentations}
\label{sec:idpres} Now we can show that every type-theoretic weak factorization system has an $\Id$-presentation.

\begin{lem}\label{rel2idpres}
Consider a relation $R$ on $\C$ such that $\reltofact ( R)$ is a type-theoretic weak factorization structure.
Denote the components of $R(X)$ for any object $X$ of $\C$ by the following diagram.
\[ \diagram
 X   \ar[r]|-{\eta_X} &  R X  \ar@<1.5ex>[l]^-{\epsilon_{1X} } \ar@<-1.5ex>[l]_-{\epsilon_{0X} }
\enddiagram \]

 Then $R$ is an $\Id$-presentation of the weak factorization system $[\reltofact ( R)]$ if and only if $\epsilon_{0X} \times \epsilon_{1X}: RX \to X \times X$ is in the right class for each object $X$.
\end{lem}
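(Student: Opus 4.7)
The plan is to unfold what it means for $R$ to be an $\Id$-presentation. Since $\reltofact(R)$ is already a weak factorization structure by hypothesis, Definitions \ref{relidtypes2} and \ref{Idtypes} reduce the problem to checking, for each object $X$, that $R(X)$ is an identity type of $!_X\colon X \to *$ in the display map category $(\C, \alg{\reltofact(R)})$ (which is legitimately a display map category by Proposition \ref{relfact2fib}). This amounts to two conditions: (i) $\epsilon_{0X} \times \epsilon_{1X}\colon RX \to X \times X$ lies in $\alg{\reltofact(R)}$; and (ii) for every $\alpha\colon A \to X$ and $i \in \{0,1\}$, the canonical section $\alpha^* \eta_X\colon A \to A \times_X RX$, with pullback formed along $\pi_i\epsilon_X = \epsilon_{iX}$, lies in $\coalg{\reltofact(R)} = {}^\boxslash \alg{\reltofact(R)}$.

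The forward direction is then immediate: condition (i) is exactly the stated conclusion.

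For the reverse direction, I take (i) as the hypothesis and establish (ii). The case $i=0$ is essentially automatic: by construction the factorization $\reltofact(R)(\alpha)$ has the form $A \xrightarrow{1_A \times \eta_X \alpha} A \times_{\epsilon_{0X}} RX \xrightarrow{\epsilon_{1X}\pi_{RX}} X$; its left leg $\lambda_\alpha$ lies in the left class of the weak factorization structure $\reltofact(R)$, and this left leg coincides with $\alpha^*\eta_X$ when $i=0$.

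The case $i=1$ is the main obstacle: the factorization produced by $\reltofact$ pulls back along $\epsilon_0$ rather than $\epsilon_1$, so the $i=1$ case is not directly visible in $\reltofact(R)(\alpha)$. I would handle it by applying Corollary \ref{newfact} to the \emph{swapped} relation on the single object $X$ with components $\eta'_X = \eta_X$, $\epsilon'_{0X} = \epsilon_{1X}$, and $\epsilon'_{1X} = \epsilon_{0X}$. The hypotheses of Corollary \ref{newfact} hold: $\eta_X$ is (up to the canonical isomorphism $X \times_X RX \cong RX$) the map $\lambda_{1_X}$, hence in the left class of $\reltofact(R)$; and $\epsilon'_{0X}\times\epsilon'_{1X} = \epsilon_{1X}\times\epsilon_{0X}$ factors as the swap isomorphism on $X \times X$ postcomposed with $\epsilon_{0X}\times\epsilon_{1X}$, both of which lie in the right class. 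The conclusion of Corollary \ref{newfact}, applied to $f = \alpha$, is that $1_A \times \eta_X \alpha\colon A \to A \times_{\epsilon_{1X}} RX$ lies in the left class, and this map is precisely $\alpha^*\eta_X$ in the case $i=1$.
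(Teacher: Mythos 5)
Your proof is correct and follows essentially the same route as the paper: the forward direction is immediate from the definition, the case $i=0$ is the left leg of $\reltofact(R)(\alpha)$, and the case $i=1$ is handled by swapping $\epsilon_0$ and $\epsilon_1$. The only cosmetic difference is that you invoke Corollary \ref{newfact} on a single-object swapped relation, while the paper applies the global involution on $\rel[\C]$ and cites Theorem \ref{wfs2wfs} directly; since Corollary \ref{newfact} is itself deduced from Theorem \ref{wfs2wfs} by the same one-object-replacement trick, these amount to the same argument.
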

\begin{proof}
Suppose that $R$ is an $\Id$-presentation. Then by definition, we must have that each $\epsilon_{0X} \times \epsilon_{1X}: RX \to X \times X$ is in $\alg{\reltofact(R)}$.

Conversely, suppose that each $\epsilon_{0X} \times \epsilon_{1X}: RX \to X \times X$ is in $\alg{\reltofact(R)}$. Then it remains to show that each $f^*\eta_Y$, as displayed in the diagram ($*$) below, is in $\coalg{\reltofact(R)}$.
\[ \diagram 
&f^*RY\ar[rrr] \ar[dd]|!{"2,1";"2,4"}\hole &&&{RY} \ar[dd]^{\epsilon_{iY}} \\
X \ar[rrr] \ar@{=}[dr] \ar[ur]^{f^* \eta_Y} &&& Y \ar@{=}[dr] \ar[ur]^{\eta_Y} & \\
& X \ar[rrr]^f && & Y
\enddiagram \tag{$*$}\]

Note that when $i=0$ in the diagram $(*)$ above, the morphism $f^* \eta_Y$ is isomorphic to $\lambda_{\reltofact(R)}(f)$ (i.e., it has the same universal property as $1_X \times \eta_Y f: X \to X \times_Y RY$). Thus, it must be in $\coalg{\reltofact(R)}$.

There is an involution $I$ on $\rel[\C]$ which sends $S(X) \epsilon_i$ to $S(X) \epsilon_{i+1}$ for any $S \in \rel[\C]$, $X \in \C$, and $i \in \mathbb Z / 2$ and keeps all else constant. Then $IR$ satisfies the hypotheses of Theorem \ref{wfs2wfs}, so $\reltofact(IR)$ is a weak factorization structure equivalent to $\reltofact(R)$. Now when $i=1$, the morphism $f^* \eta_Y$ in the diagram $(*)$ is isomorphic to $\lambda_{\reltofact(IR)} f$, so it is in $\coalg{\reltofact(R)}$.

Therefore, every $f^*\eta_Y$ in the diagram $(*)$ is in $\coalg{\reltofact(R)}$, so $R$ is an $\Id$-presentation of this weak factorization system.
\end{proof}

Now combining Corollary \ref{wfs2wfscor} with this lemma, \ref{rel2idpres}, we see the following.
\begin{thm}\label{wfs2idpres}
Consider a type-theoretic weak factorization structure $W$ on $\C$. The relation $ \facttorel(W)$ is an $\Id$-presentation of the weak factorization system $[W]$. Thus, every type-theoretic weak factorization system has an $\Id$-presentation.
\end{thm}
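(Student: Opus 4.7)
The plan is to combine Corollary \ref{wfs2wfscor} with Lemma \ref{rel2idpres}. Corollary \ref{wfs2wfscor} already tells us that $\reltofact \facttorel(W)$ is a weak factorization structure equivalent to $W$, so in particular $[\reltofact\facttorel(W)] = [W]$ and $\alg{\reltofact\facttorel(W)} = \alg{W}$ by Proposition \ref{samelifting}. Hence it suffices to verify that the relation $\facttorel(W)$ satisfies the hypothesis of Lemma \ref{rel2idpres}: for every object $X$ of $\C$, the map $\epsilon_{0X} \times \epsilon_{1X}$ of $\facttorel(W)(X)$ must lie in the right class of $[\reltofact\facttorel(W)]$.

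To verify this, I would unpack the definition $\facttorel = \sigma^* \iota^*$ (Notation \ref{notn:functors}) at the object $X$. By Construction \ref{con:fact2relfact}, $\iota^*$ applied to $W$ at a morphism $f:X\to Y$ is defined via the $W$-factorization of $1_X \times f: X \to X \times Y$; pulling back along $\C^!: \C \to \C^\M$ to apply $\sigma^*$ then restricts attention to identities. One reads off that $\facttorel(W)(X)$ is exactly the $W$-factorization of the diagonal $\Delta_X: X \to X \times X$, so that $\eta_X = \lambda_W(\Delta_X)$ and $\epsilon_{0X} \times \epsilon_{1X} = \rho_W(\Delta_X)$. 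Since $W$ is a weak factorization structure, $\rho_W(\Delta_X) \in \alg{W}$ by definition, which is precisely the right class. Thus the hypothesis of Lemma \ref{rel2idpres} is met.

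Applying Lemma \ref{rel2idpres} then yields that $\facttorel(W)$ is an $\Id$-presentation of $[\reltofact\facttorel(W)] = [W]$, which is exactly the first assertion. The second assertion follows immediately: by Proposition \ref{equivwfs} every type-theoretic weak factorization system is of the form $[W]$ for some type-theoretic weak factorization structure $W$, and we have just exhibited an $\Id$-presentation for it. There is no real obstacle here; all the substantive work has been done in Section \ref{sec:bigdiagramsubsec} (establishing that $\reltofact\facttorel(W)$ is equivalent to $W$ when $W$ is type-theoretic) and in Lemma \ref{rel2idpres} (reducing the $\Id$-presentation condition to a single right-class membership), so the proof is an assembly of these facts together with the unpacking of what $\facttorel(W)(X)$ actually is.
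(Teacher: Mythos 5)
Your proposal is correct and follows essentially the same route as the paper: invoke Corollary \ref{wfs2wfscor}, observe that $\facttorel(W)(X)$ is the $W$-factorization of the diagonal $\Delta_X$ so that $\epsilon_{0X}\times\epsilon_{1X}=\rho_W(\Delta_X)$ lies in the right class, and conclude by Lemma \ref{rel2idpres}. The paper's proof is exactly this assembly, so there is nothing to add.
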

\begin{proof}
By Corollary \ref{wfs2wfscor}, $ \facttorel(W)$ generates a type-theoretic weak factorization structure $\reltofact \facttorel (W)$ equivalent to $W$. For any object $X$ of $\C$, the morphism 
\[ \facttorel(W) X\epsilon_0 \times  \facttorel(W) X \epsilon_1:  \facttorel(W) X \Phi \to X \times X\]
is the right factor of the morphism $\Delta(X)$ in the factorization $W$. Thus, it is in the right class of the weak factorization system.
Then this is an $\Id$-presentation of $[W]$ by Lemma \ref{rel2idpres}.
\end{proof}

\begin{cor}\label{wfs2idpresfunctors}
The functor $ \facttorel: \fact[\C] \to \rel[\C]  $ restricts to a functor $ \facttorel: \ttWFS[\C]\to \IdPres[\C]$, and the composition $|\reltofact \facttorel |: |\ttWFS[\C]| \to | \ttWFS[\C]|$ is isomorphic to the identity functor.
\end{cor}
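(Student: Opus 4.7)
The plan is to bootstrap directly from Theorem~\ref{wfs2idpres} and Corollary~\ref{wfs2wfscor}, with essentially no new calculation required. For the first claim, since $\IdPres[\C]$ is by definition a full subcategory of $\rel[\C]$ and $\ttWFS[\C]$ is a full subcategory of $\fact[\C]$, it suffices to check that $\facttorel$ sends objects of $\ttWFS[\C]$ to objects of $\IdPres[\C]$; the action on morphisms then restricts automatically. The object-level claim is exactly Theorem~\ref{wfs2idpres}.

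For the second claim, I first verify that $\reltofact\facttorel$ restricts to an endofunctor on $\ttWFS[\C]$. By Corollary~\ref{wfs2wfscor}, for each $W \in \ttWFS[\C]$ the factorization $\reltofact\facttorel(W)$ is a weak factorization structure equivalent to $W$, and by Proposition~\ref{equivwfs} this equivalence entails $\coalg{\reltofact\facttorel(W)} = \coalg{W}$ and $\alg{\reltofact\facttorel(W)} = \alg{W}$. Being type-theoretic depends only on the underlying weak factorization system: both fibrancy of all objects and stability of the left class under pullback along the right class refer only to $(\coalg{W},\alg{W})$. Hence $\reltofact\facttorel(W)$ is type-theoretic, and fullness of $\ttWFS[\C] \hookrightarrow \fact[\C]$ gives functoriality on morphisms.

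To produce the natural isomorphism $|\reltofact\facttorel| \cong \mathrm{id}$ on $|\ttWFS[\C]|$, I again use that $\reltofact\facttorel(W)$ is equivalent to $W$ in $|\WFS[\C]|$. By Proposition~5.1 of \cite{RT02} (invoked in the proof of Proposition~\ref{equivwfs}), this equivalence is witnessed by mutual morphisms $\reltofact\facttorel(W) \leftrightarrows W$ in $\WFS[\C] \subseteq \fact[\C]$, and fullness places them in $\ttWFS[\C]$. Consequently $[\reltofact\facttorel(W)] = [W]$ in $|\ttWFS[\C]|$ for every $W$. Naturality of this family is free: $|\ttWFS[\C]|$ is a proset, so any two parallel morphisms agree and every naturality square commutes vacuously. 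The only step that requires any thought is observing that the type-theoretic property passes across the equivalence of Corollary~\ref{wfs2wfscor}, and even that is routine because it is a property of the associated weak factorization system rather than of the structure itself; I anticipate no real obstacle, as this corollary serves only to package previous results into a functorial statement.
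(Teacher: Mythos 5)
Your proposal is correct and follows essentially the same route as the paper: the first claim is immediate from Theorem \ref{wfs2idpres} together with fullness of the subcategories, and the second follows from the equivalence $W \simeq \reltofact\facttorel(W)$ of Corollary \ref{wfs2wfscor} plus the observation that naturality is automatic in a proset. Your explicit check that $\reltofact\facttorel(W)$ is again type-theoretic (because the property depends only on the underlying weak factorization system) is a detail the paper leaves implicit, but it is the same argument in substance.
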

\begin{proof}
By the previous theorem, \ref{wfs2idpres}, all objects in the image of $ \facttorel:  \ttWFS[\C] \to \rel[\C]  $ are $\Id$-presentations. Thus, this functor restricts to $ \facttorel: \ttWFS[\C]\to \IdPres[\C]$.

By the previous theorem again, for any object $W \in \ttWFS[\C]$, we have that $W$ is equivalent to $\reltofact \facttorel (W)$. Thus, they are isomorphic as objects of $|\ttWFS[\C]| $. Since $|\ttWFS[\C]| $ is a proset, these isomorphisms assemble into a natural transformation $1 \cong |\reltofact \facttorel |$.
\end{proof}

\begin{exmp}\label{cisex}
Given any Cisinski model structure $(\C, \W , \F)$ on a topos $\M$ \cite{Cis06}, we claim that the weak factorization system $(\C \cap \W \cap \M_{\F} , \F \cap \M_{\F})$ restricted to the full subcategory $\M_{\F}$ of fibrant objects is type-theoretic.

For this weak factorization system to be type-theoretic, all its objects must be fibrant, which we have satisfied by construction, and $\C \cap \W  \cap \M_{\F}$ must be stable under pullback along $\F  \cap \M_{\F}$. In a Cisinski model structure, $\C$ is precisely the class of monomorphisms, so it is stable under pullback (along all morphisms) in $\M$. Then, in particular, it is stable under pullback along $\F  \cap \M_{\F}$ in $\M_{\F}$. A standard result of model category theory says that $ \W \cap \M_{\F}$ is stable under pullback in  $\M_{\F}$ \cite[\S1~Ex.~1,\S4~Lem.~1]{Bro73}. Thus $\C \cap \W  \cap \M_{\F}$ is stable under pullback along $\F  \cap \M_{\F}$, and the weak factorization system $(\C \cap \W \cap \M_{\F} , \F \cap \M_{\F})$ is type-theoretic.

Then we find many examples of type-theoretic weak factorization systems, including those in the categories of Kan complexes \cite{Qui67}, quasicategories \cite{Joy08}, and fibrant cubical sets \cite{Cis06}. These all have $\Id$-presentations.
\end{exmp}

\section{Moore relations from type-theoretic weak factorization systems}
\label{sec:idpresandmrs}

In this section, we tie up the preceding sections by showing that a relation $R$ on a finitely complete category $\C$ is a Moore relation system if and only if it is an $\Id$-presentation.

We can immediately see from our previous results that any relation $R$ which underlies a Moore relation system is an $\Id$-presentation of the weak factorization system it generates.

\begin{prop}\label{mrs2idpres}
Consider a Moore relation system $R$ on $\C$. Then $R$ is an $\Id$-presentation.
\end{prop}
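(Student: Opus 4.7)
The plan is to combine the results already established in Section \ref{sec:mrs} with the characterization of $\Id$-presentations given by Lemma \ref{rel2idpres}. Concretely, suppose $R$ is a Moore relation system; we must show that $\reltofact(R)$ is a weak factorization structure and that $R$ models $\Id$-types of objects in $(\C, \alg{\reltofact(R)})$.

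First, I would invoke Theorem \ref{mrs2ttwfs}: since $R$ is transitive, homotopical, and symmetric, $\reltofact(R)$ is a type-theoretic weak factorization structure. In particular $\reltofact(R)$ is a weak factorization structure, so the first clause of Definition \ref{relidtypes2} is satisfied, and by Proposition \ref{relfact2fib} every object of $\C$ is fibrant in $[\reltofact(R)]$, so $(\C, \alg{\reltofact(R)})$ is a display map category in which it makes sense to speak of $\Id$-types of objects.

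Next, I would apply Lemma \ref{rel2idpres}, which reduces the verification that $R$ is an $\Id$-presentation to showing that for every object $X$ of $\C$ the morphism
\[\epsilon_{0X} \times \epsilon_{1X}: \Psi X \to X \times X\]
lies in $\alg{\reltofact(R)}$. But this is precisely Lemma \ref{standardfibrationweak}, which produces a $\reltofact(R)$-algebra structure on $\epsilon_0 \times \epsilon_1$ using only the transitivity datum $\mu$ and the symmetry datum $\nu$ of the Moore relation system.

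There is no real obstacle here; the proof is essentially a two-line assembly of Theorem \ref{mrs2ttwfs}, Lemma \ref{standardfibrationweak}, and Lemma \ref{rel2idpres}. The work has already been done in the preceding section, where the definitions of \emph{transitive}, \emph{homotopical}, and \emph{symmetric} were chosen precisely so that $\reltofact(R)$ is a type-theoretic weak factorization structure with $\epsilon_0 \times \epsilon_1$ in its right class --- which is exactly the content of being an $\Id$-presentation.
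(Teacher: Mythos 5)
Your proof is correct and is essentially identical to the paper's: both assemble Theorem \ref{mrs2ttwfs} (to get a type-theoretic weak factorization structure), Lemma \ref{standardfibrationweak} (to put $\epsilon_0 \times \epsilon_1$ in the right class), and Lemma \ref{rel2idpres} (to conclude that $R$ is an $\Id$-presentation).
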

\begin{proof}
By Theorem \ref{mrs2ttwfs}, $R$ generates a type-theoretic weak factorization structure $ \reltofact  (R)$. By Lemma \ref{standardfibrationweak}, every $RX \epsilon_0 \times RX \epsilon_1$ is in the right class. Then by Lemma \ref{rel2idpres}, this is a $\Id$-presentation of $[\reltofact  (R)]$.
\end{proof}

Now we prove the converse: that any $\Id$-presentation is a Moore relation system.
We consider a relation $R$ on $\C$ which at an object $X$ gives the following diagram.
\[ \diagram
 X   \ar[r]|-{\eta_X} &  \Psi X , \ar@<1.5ex>[l]^-{\epsilon_{1X} } \ar@<-1.5ex>[l]_-{\epsilon_{0X} }
\enddiagram \]
We let $\lambda$ denote $\lambda_{\reltofact  (R)}$ and $\rho$ denote $\rho_{\reltofact  (R)}$.


\begin{prop}\label{idpres2trans}
Suppose that $R$ is an $\Id$-presentation of a weak factorization system. Then $R$ is transitive.
\end{prop}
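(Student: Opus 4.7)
The plan is to produce $\mu_X$ for each object $X$ as the solution of a single lifting problem pitting a morphism we get from the $\Id$-presentation axiom against $\epsilon_0 \times \epsilon_1$. Consider the square
\[ \diagram
\Psi X \ar[r]^-{1_{\Psi X}} \ar[d]_-{1 \times \eta\epsilon_1} & \Psi X \ar[d]^-{\epsilon_{0X} \times \epsilon_{1X}} \\
\pullb{\Psi X}{\Psi X}{\epsilon_1}{\epsilon_0} \ar[r]_-{\epsilon_0 \pi_0 \times \epsilon_1\pi_1} & X \times X
\enddiagram \]
where $1 \times \eta\epsilon_1$ is induced by $1_{\Psi X}$ and $\eta\epsilon_1 : \Psi X \to \Psi X$. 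Commutativity is immediate: the top-right composite is $\epsilon_0 \times \epsilon_1$, while the left-bottom one is $(\epsilon_0, \epsilon_1 \eta \epsilon_1) = (\epsilon_0, \epsilon_1)$ using $\epsilon_1 \eta = 1$.

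The right-hand vertical lies in $\alg{\reltofact(R)}$ by the very hypothesis that $R$ is an $\Id$-presentation (Definition \ref{relidtypes2} together with condition (1) of Definition \ref{Idtypes}). The key step is to check that the left-hand vertical is in $\coalg{\reltofact(R)}$. For this we apply condition (2) of Definition \ref{Idtypes} to the object-level identity type $R(X)$, taking $\alpha := \epsilon_{1X} : \Psi X \to X$ and $i = 0$: the pullback of $\epsilon_{0X}$ along $\epsilon_{1X}$ is exactly $\pullb{\Psi X}{\Psi X}{\epsilon_1}{\epsilon_0}$, and the universal morphism $\alpha^* \eta_X$ from $\Psi X$ into this pullback is precisely $1 \times \eta\epsilon_1$. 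Thus $1 \times \eta\epsilon_1 \in {^\boxslash \alg{\reltofact(R)}} = \coalg{\reltofact(R)}$.

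By Proposition \ref{coalgalglift}, the square admits a lift, which we take as $\mu_X$. The lower triangle of the lift unpacks to $(\epsilon_0 \times \epsilon_1)\mu_X = \epsilon_0 \pi_0 \times \epsilon_1 \pi_1$, i.e.\ $\epsilon_0 \mu_X = \epsilon_0 \pi_0$ and $\epsilon_1 \mu_X = \epsilon_1 \pi_1$, recovering the left-hand diagram of \eqref{weakmu}. The upper triangle of the lift is the right unit equation $\mu_X (1 \times \eta) = 1_{\Psi X}$, which is the right-hand diagram of \eqref{weakmu}. Hence $R$ is transitive.

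The only real subtlety is identifying the pulled-back map demanded by the $\Id$-presentation axiom with the map $1 \times \eta\epsilon_1$ that appears on the left of our lifting square; once the correct choice $\alpha = \epsilon_1$, $i = 0$ is made, everything else is routine. Note that we never need a left unit law or any associativity — the definition of \emph{transitive} in \ref{weaktrans} demands only what a single lift supplies, which is exactly why the argument reduces to one lifting problem.
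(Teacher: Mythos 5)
Your proof is correct and is essentially the paper's proof: the paper solves exactly the same lifting problem, with left-hand vertical $\lambda_{\epsilon_1} = 1 \times \eta\epsilon_1$ against $\epsilon_0 \times \epsilon_1$, and reads off both diagrams of \eqref{weakmu} from the two triangles of the lift. The only cosmetic difference is that the paper gets $1 \times \eta\epsilon_1 \in \coalg{\reltofact(R)}$ for free by recognizing it as $\lambda_{\reltofact(R)}(\epsilon_1)$ (a left factor of the weak factorization structure), whereas you justify it via condition (2) of Definition \ref{Idtypes} with $\alpha = \epsilon_1$, $i=0$ — both are valid.
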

\begin{proof}
For any object $X$ of $\C$, we let $\mu_X$ be a solution to the following lifting problem.
\[ \diagram
\Psi X \ar[d]_{\lambda_{\epsilon_1}} \ar@{=}[rr] & & \Psi X  \ar[d]^{\epsilon_0 \times \epsilon_1} \\
\pullb{\Psi X}{\Psi X}{\epsilon_1}{\epsilon_0} \ar[rr]_-{ \epsilon_0 \pi_0 \times \epsilon_1 \pi_1} \ar@{-->}[urr]^{\mu_X}& & X \times X
\enddiagram \]
This makes $R$ into a transitive relation.
\end{proof}

\begin{prop}\label{idpres2sym}
Suppose that $R$ is an $\Id$-presentation of a weak factorization system. Then $R$ is symmetric.
\end{prop}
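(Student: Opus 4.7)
The plan is to mimic the proof of Proposition \ref{idpres2trans} and obtain $\nu_X$ as a lift in a suitable square. Namely, I will consider
\[ \diagram
\Psi X \ar[d]_-{\eta \epsilon_0 \times 1_{\Psi X}} \ar@{=}[rr] & & \Psi X \ar[d]^-{\epsilon_0 \times \epsilon_1} \\
\pullb{\Psi X}{\Psi X}{\epsilon_0}{\epsilon_0} \ar[rr]_-{\epsilon_1 \pi_0 \times \epsilon_1 \pi_1} \ar@{-->}[urr]^-{\nu_X} & & X \times X
\enddiagram \]
which commutes since $\epsilon_1 \eta = 1_X$. The right-hand map is in $\alg{\reltofact(R)}$ by the hypothesis that $R$ is an $\Id$-presentation. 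Once I know the left-hand map lies in $\coalg{\reltofact(R)}$, any lift $\nu_X$ will simultaneously satisfy $\nu_X(\eta \epsilon_0 \times 1_{\Psi X}) = 1_{\Psi X}$ and $(\epsilon_0 \times \epsilon_1)\nu_X = \epsilon_1 \pi_0 \times \epsilon_1 \pi_1$, which is exactly the data required for symmetry.

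The main task is therefore to verify that $\eta \epsilon_0 \times 1_{\Psi X}: \Psi X \to \pullb{\Psi X}{\Psi X}{\epsilon_0}{\epsilon_0}$ is in the left class. I will show that the square
\[ \diagram
\Psi X \ar[r]^-{\eta \epsilon_0 \times 1_{\Psi X}} \ar[d]_-{\epsilon_0} \pullback & \pullb{\Psi X}{\Psi X}{\epsilon_0}{\epsilon_0} \ar[d]^-{\pi_0} \\
X \ar[r]^-{\eta} & \Psi X
\enddiagram \]
is a pullback, which follows from the pullback pasting lemma applied to the right-hand pullback defining $\pullb{\Psi X}{\Psi X}{\epsilon_0}{\epsilon_0}$. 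The bottom map $\eta = \eta_X$ is in $\coalg{\reltofact(R)}$ because it is (isomorphic to) $\lambda_{1_X}$. The right-hand map $\pi_0$ is a pullback of $\epsilon_0: \Psi X \to X$; and $\epsilon_0 = \pi_0 \circ (\epsilon_0 \times \epsilon_1)$ is the composite of the right-class map $\epsilon_0 \times \epsilon_1$ with the projection $\pi_0: X \times X \to X$, which is itself a pullback of $X \to \ast$ and hence in the right class (all objects are fibrant by Proposition \ref{relfact2fib}). So $\pi_0: \pullb{\Psi X}{\Psi X}{\epsilon_0}{\epsilon_0} \to \Psi X$ is in $\alg{\reltofact(R)}$.

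Since $\reltofact(R)$ is type-theoretic, the left class is stable under pullback along the right class; therefore $\eta \epsilon_0 \times 1_{\Psi X}$ lies in $\coalg{\reltofact(R)}$, and a lift $\nu_X$ exists. This is the only real content of the argument: the verification that the two diagrams in the definition of symmetric are exactly the statements encoded by the upper triangle and by composition with $\epsilon_0 \times \epsilon_1$ in the lifting square is then immediate. I do not expect any genuine obstacle beyond identifying the correct pullback rectangle to apply type-theoreticity to.
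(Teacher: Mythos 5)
Your lifting square is exactly the one the paper uses (the paper writes the left-hand map as $\tau\lambda_{\epsilon_0}$, where $\tau$ is the twist, which is the same morphism as your $\eta\epsilon_0\times 1_{\Psi X}$), and your verification that a lift in that square yields the two diagrams defining symmetry is correct. The problem is the justification that $\eta\epsilon_0\times 1_{\Psi X}$ lies in $\coalg{\reltofact(R)}$. You appeal to $\reltofact(R)$ being type-theoretic, i.e.\ to stability of the left class under pullback along the right class. But the definition of an $\Id$-presentation only guarantees that $\reltofact(R)$ is a weak factorization \emph{structure}; the fact that it is type-theoretic is Theorem \ref{idpres2ttwfs}, whose proof in the paper runs through Theorem \ref{symthmweak} and therefore \emph{depends on this very proposition}. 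As the paper's development stands, your argument is circular: the only available route to the pullback-stability you invoke passes through the symmetry of $R$.

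The gap is easily repaired, and in two ways, both of which make your pullback rectangle and the fibrancy digression unnecessary. First, $\eta\epsilon_0\times 1_{\Psi X}$ is the twist isomorphism composed with $1_{\Psi X}\times\eta\epsilon_0=\lambda_{\epsilon_0}$, the canonical left factor of $\epsilon_0\colon\Psi X\to X$ produced by $\reltofact(R)$; since $\reltofact(R)$ is a weak factorization structure, $\lambda_f\in\coalg{\reltofact(R)}$ for \emph{every} $f$, and the left class is closed under composition with isomorphisms. This is precisely how the paper handles it (and how Proposition \ref{idpres2trans}, which you say you are mimicking, handles $\lambda_{\epsilon_1}$). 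Second, even without that observation, $1_{\Psi X}\times\eta\epsilon_0$ is exactly the pullback $\epsilon_0^*\eta$ appearing in condition (2) of Definition \ref{Idtypes} (take $\alpha=\epsilon_0$ and $i=0$), so the definition of a model of $\Id$-types of objects asserts directly that it is in $^\boxslash\alg{\reltofact(R)}=\coalg{\reltofact(R)}$. Either fix makes the proof non-circular; as written, it is not.
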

\begin{proof}
For any object $X$ of $\C$, we let $\nu_X$ be a solution to the following lifting problem (where $\tau: \pullb{\Psi X}{\Psi X}{\epsilon_0}{\epsilon_0} \to \pullb{\Psi X}{\Psi X}{\epsilon_0}{\epsilon_0}$ is the standard twist involution).
\[ \diagram
\Psi X \ar[d]_{\tau \lambda_{\epsilon_0}} \ar@{=}[rr] & & \Psi X  \ar[d]^{\epsilon_0 \times \epsilon_1} \\
\pullb{\Psi X}{\Psi X}{\epsilon_0}{\epsilon_0} \ar[rr]_-{\epsilon_1 \pi_0  \times \epsilon_1 \pi_1} \ar@{-->}[urr]^{\nu_X}& & X \times X
\enddiagram \]
This makes $R$ into a symmetric relation.
\end{proof}

\begin{thm}\label{idpres2ttwfs}
Consider an $\Id$-presentation $R$ on $\C$. The factorization $ \reltofact  (R)$ is a type-theoretic weak factorization structure.
\end{thm}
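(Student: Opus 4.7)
The plan is to assemble the result from pieces already proved in this section together with Corollary~\ref{ttwfsweak}. By hypothesis $R$ is an $\Id$-presentation, so Definition~\ref{relidtypes2} gives us two things immediately: first, that $\reltofact(R)$ is a weak factorization structure; and second, that $R$ is a model of $\Id$-types of objects in the display map category $(\C, \alg{\reltofact(R)})$. Proposition~\ref{relfact2fib} then automatically supplies fibrancy of every object in $[\reltofact(R)]$, which is one of the two conditions required to be type-theoretic.

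What remains is stability of $\coalg{\reltofact(R)}$ under pullback along $\alg{\reltofact(R)}$. For this I would invoke Corollary~\ref{ttwfsweak}, which says that whenever $R$ is a transitive and symmetric relation and $\reltofact(R)$ is a weak factorization structure, the latter is type-theoretic. The weak factorization structure hypothesis is already in hand; transitivity is Proposition~\ref{idpres2trans} and symmetry is Proposition~\ref{idpres2sym}, both of which are proved just above under the sole assumption that $R$ is an $\Id$-presentation. So the hypotheses of Corollary~\ref{ttwfsweak} are met and the conclusion follows directly.

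There is no real obstacle to overcome here — the work has been done upstream. The key step in the chain of reasoning is Lemma~\ref{standardfibrationweak}, which used symmetry and transitivity to produce an algebra structure on each $\epsilon_0 \times \epsilon_1 \colon \Psi X \to X \times X$, feeding into Theorem~\ref{symthmweak} to give pullback stability. Since an $\Id$-presentation already has each $\epsilon_0 \times \epsilon_1$ in the right class by Lemma~\ref{rel2idpres}, the extraction of $\mu$ and $\nu$ as lifts (Propositions~\ref{idpres2trans}, \ref{idpres2sym}) goes through without issue. The proof is therefore a one-line appeal to Corollary~\ref{ttwfsweak} once the two preceding propositions are cited.
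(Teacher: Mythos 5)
Your proof is correct and follows essentially the same route as the paper's: the paper proves symmetry via Proposition~\ref{idpres2sym} and then applies Proposition~\ref{relfact2fib} and Theorem~\ref{symthmweak} directly, using the fact that each $\epsilon_0 \times \epsilon_1$ is already a right map by the very definition of an $\Id$-presentation. Your detour through transitivity and Corollary~\ref{ttwfsweak} (which re-derives that algebra structure via Lemma~\ref{standardfibrationweak}) is harmless but slightly redundant.
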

\begin{proof}
By Proposition \ref{idpres2sym}, $R$ is symmetric. Then by Proposition \ref{relfact2fib} and Theorem \ref{symthmweak}, $ \reltofact  (R)$ is type-theoretic.
\end{proof}

\begin{cor}\label{idpres2ttwfsfunctors}
The functor $ \reltofact : \rel[\C] \to \fact[\C] $ restricts to a functor $ \reltofact : \IdPres[\C] \to \ttWFS[\C]$.
\end{cor}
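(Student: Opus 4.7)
The plan is to invoke Theorem~\ref{idpres2ttwfs} for the action on objects and then use the fullness of the two subcategories involved to get the statement for morphisms essentially for free.

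First, I would show that the object function of $\reltofact : \rel[\C] \to \fact[\C]$ restricts as claimed. Given any $R \in \IdPres[\C]$, Definition~\ref{idprescatdef} tells us that $R$ is an $\Id$-presentation in $\rel[\C]$. By Theorem~\ref{idpres2ttwfs}, $\reltofact(R)$ is then a type-theoretic weak factorization structure on $\C$, so it is an object of $\ttWFS[\C]$. Hence $\reltofact$ sends the objects of $\IdPres[\C]$ into the objects of $\ttWFS[\C]$.

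Next, for morphisms, I would note that $\IdPres[\C]$ is defined (Definition~\ref{idprescatdef}) as the \emph{full} subcategory of $\rel[\C]$ spanned by the $\Id$-presentations, and similarly $\ttWFS[\C]$ is the full subcategory of $\fact[\C]$ spanned by the type-theoretic weak factorization structures. Consequently, for any $R, R' \in \IdPres[\C]$, we have $\hom_{\IdPres[\C]}(R, R') = \hom_{\rel[\C]}(R, R')$ and $\hom_{\ttWFS[\C]}(\reltofact(R), \reltofact(R')) = \hom_{\fact[\C]}(\reltofact(R), \reltofact(R'))$, so the hom-function of $\reltofact$ on $\rel[\C]$ restricts without modification to a hom-function $\hom_{\IdPres[\C]}(R, R') \to \hom_{\ttWFS[\C]}(\reltofact R, \reltofact R')$.

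Finally, functoriality (preservation of identities and composition) of the restricted assignment is inherited directly from the functoriality of $\reltofact : \rel[\C] \to \fact[\C]$. There is no genuine obstacle here; the substantive work was already done in Theorem~\ref{idpres2ttwfs}, and this corollary is essentially a bookkeeping statement that packages that theorem together with the fullness of $\IdPres[\C]$ and $\ttWFS[\C]$.
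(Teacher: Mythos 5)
Your proposal is correct and matches the paper's own proof: both invoke Theorem~\ref{idpres2ttwfs} to see that $\reltofact$ sends $\Id$-presentations to type-theoretic weak factorization structures, and then use the fullness of the subcategories to conclude that the restriction on morphisms is automatic. Your write-up just spells out the fullness bookkeeping more explicitly than the paper does.
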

\begin{proof}
The previous theorem, \ref{idpres2ttwfs}, tells us that every object in the image of $ \reltofact : \rel[\C] \to \fact[\C] $ is in the full subcategory $\ttWFS[\C]$. Thus, this functor restricts to $\reltofact : \IdPres[\C] \to \ttWFS[\C]$.
\end{proof}

\begin{prop}\label{idpres2homo}
Suppose that $R$ is an $\Id$-presentation of a weak factorization system. Then $R$ is homotopical.
\end{prop}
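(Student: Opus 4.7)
The plan is to take $\Psi^\square X$ to be the middle object of the $\reltofact(R)$-factorization $\Psi X \xrightarrow{\delta_X} \Psi^\square X \xrightarrow{r_X} \Psi^{\times 4} X$ of the canonical cone morphism into the $4$-fold limit described in the example preceding this proposition. By Theorem \ref{idpres2ttwfs} and Proposition \ref{samelifting}, $\delta_X$ lies in $\coalg{W}$ and $r_X$ lies in $\alg{W}$, where $W := \reltofact(R)$. I would define $\epsilon_0 := \pi_T r_X$, $\zeta := \pi_T r_X$, $\epsilon_1 := \pi_L r_X$, and $\eta := \delta_X \eta_X$, where $\pi_T, \pi_L: \Psi^{\times 4} X \to \Psi X$ are the top and left limit projections.

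Verifying the commutativity conditions of Definition \ref{homotopical} that do not involve $\tau$ is a direct computation. The triangles about $\eta$ and the three diagrams involving $\delta$ in \eqref{weakho} all follow from the cone components defining $\eta: \Psi X \to \Psi^{\times 4} X$ (identities on the left and right copies of $\Psi X$, and $\eta\epsilon_i$ on the top and bottom copies). The middle square $\epsilon_0 \epsilon_i = \epsilon_i \zeta$ is trivial for $i=0$ (since $\epsilon_0 = \zeta$) and for $i=1$ reduces to the limit identity $\epsilon_0 \pi_L = \epsilon_1 \pi_T$ at the top-right corner of the diagram defining $\Psi^{\times 4} X$.

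For each $f: X \to Y$, I would construct $\tau_f: A \to \Psi C$ (where $A := \pullb{X}{\Psi^\square Y}{\eta f}{\zeta}$ and $C := \pullb{X}{\Psi Y}{f}{\epsilon_0}$) as a solution to the lifting problem
\[\diagram
X \ar[rr]^-{\eta(1 \times \eta f)} \ar[d]_-{1 \times \delta_Y \eta f} && \Psi C \ar[d]^-{\epsilon_0 \times \epsilon_1} \\
A \ar[rr]_-{(1 \times \epsilon_0 \pi,\, 1 \times \epsilon_1 \pi)} \ar@{-->}[urr]^-{\tau_f} && C \times C
\enddiagram\]
where $\pi: A \to \Psi^\square Y$ is the projection. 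The square commutes by the identities $\epsilon_0 \delta = \eta\epsilon_0$ and $\epsilon_1 \delta = 1$, the right vertical lies in $\alg{W}$ since $R$ is an $\Id$-presentation, and the two diagrams of \eqref{weakho2} then read off directly from the lifting equations.

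The main obstacle will be showing that the left vertical $1 \times \delta_Y \eta f$ lies in $\coalg{W}$. It factors as $\lambda_f$ (which is in $\coalg{W}$ by Proposition \ref{lambdacoalg}) followed by the map $\delta_*: C \to A$ sending $(x,p)$ to $(x,\delta_Y p)$; and $\delta_*$ is a pullback of $\delta_Y \in \coalg{W}$ but along a projection $A \to \Psi^\square Y$ that need not lie in $\alg{W}$, so a direct application of the Frobenius property is unavailable. I expect to resolve this by a two-step strategy: first lift $\delta_Y$ against $\epsilon_0 \times \epsilon_1: \Psi\Psi Y \to \Psi Y \times \Psi Y$ to obtain an auxiliary $\tilde\tau: \Psi^\square Y \to \Psi\Psi Y$, where the commutativity of the underlying square is witnessed by a preliminary lift producing a canonical path-of-paths $q: \Psi Y \to \Psi\Psi Y$ satisfying $\epsilon_0 q = \eta\epsilon_0$ and $\epsilon_1 q = 1$; and then assemble $\tau_f$ from $\tilde\tau$ by applying Corollary \ref{newfact} to the relation on $Y$ with components $\delta_Y \eta_Y: Y \to \Psi^\square Y$ and $(\epsilon_0 \cdot \epsilon_0, \epsilon_1 \cdot \epsilon_1): \Psi^\square Y \to Y \times Y$, which supplies the required left-class membership for the induced map into $A$.
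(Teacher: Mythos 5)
Your skeleton matches the paper's: both form $\Psi^{\times 4}X$, take $\Psi^\square X$ to be the middle object of a factorization of the canonical map $u\colon \Psi X \to \Psi^{\times 4}X$, and define $\tau_f$ as a lift against $\epsilon_0\times\epsilon_1\colon \Psi C \to C\times C$, with the crux correctly identified as the left-class membership of the left vertical. But your resolution of that crux has a genuine gap. Corollary \ref{newfact} applied to a relation \emph{on the object $Y$} with structure maps $\delta_Y\eta_Y$ and $(\epsilon_0\epsilon_0)\times(\epsilon_1\epsilon_1)$ controls the map $X \to X\times_Y \Psi^\square Y$, i.e.\ the pullback of $\Psi^\square Y$ along $\epsilon_0\epsilon_0\colon\Psi^\square Y\to Y$ over $Y$. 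What you need is left-class membership of the map into $A = X\times_{\Psi Y}\Psi^\square Y$, the pullback along $\zeta\colon\Psi^\square Y\to\Psi Y$ of $\eta f\colon X\to\Psi Y$. These are different objects: $A$ is the locus where the $\zeta$-edge of the square is the \emph{degenerate} path $\eta f(x)$, not merely a path starting at $f(x)$, and there is a nontrivial monomorphism $A \hookrightarrow X\times_Y\Psi^\square Y$ between them. So the corollary, applied as you propose, proves the wrong statement. The auxiliary $\tilde\tau\colon\Psi^\square Y\to\Psi\Psi Y$ does not repair this either: $\Psi$ is only an afunctor, so there is no canonical comparison assembling $\Psi X$- and $\Psi\Psi Y$-data into $\Psi(X\times_Y\Psi Y)$.

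Moreover, your structural choices foreclose the paper's fix. The paper keeps the left factor $\lambda_u\colon\Psi Y\to\Psi^\square Y$ and the homotopical structure map $\delta$ \emph{distinct}: it sets $\zeta=\zeta_0$ to be a \emph{side} projection, so that $\zeta_0\lambda_u=\zeta_1\lambda_u=1_{\Psi Y}$ and $(\lambda_u,\zeta_0,\zeta_1)$ is a genuine relation on the single object $\Psi Y$ (with $\zeta_0\times\zeta_1$ in the right class, being a pullback of $(\epsilon_0\times\epsilon_1)^{\times 2}$); Corollary \ref{newfact} applied to this relation and to the morphism $\eta f\colon X\to\Psi Y$ then puts $1\times\lambda_u\eta f\colon X\to A$ in the left class, since $A$ is exactly the pullback along $\zeta_0$. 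It then constructs $\delta$ as a separate lift satisfying $\zeta_0\delta=\eta\epsilon_0$. By identifying $\delta_X$ with $\lambda_u$ you are forced to take $\zeta$ to be the top projection (the only one with $\zeta\delta=\eta\epsilon_0$), whence $\zeta\lambda_u=\eta\epsilon_0\neq 1_{\Psi Y}$ and the relation-on-$\Psi Y$ argument is unavailable. (There is also a small labelling slip: with $\epsilon_0=\zeta=\pi_T$ you need $\epsilon_1=\pi_R$, not $\pi_L$, since the top-right corner identity is $\epsilon_1\pi_T=\epsilon_0\pi_R$; the conditions of \eqref{weakho} not involving $\tau$ do then check out.) To complete the proof you should decouple $\delta$ from $\lambda_u$ and rerun the Corollary \ref{newfact} argument on $\Psi Y$ as in the paper.
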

\begin{proof}
Let $\Psi ^{\times 4}X$ denote the limit of the following diagram in $\C$. 

\[ 
\diagram
X & \Psi X \ar[l]_{\epsilon_0} \ar[r]^{\epsilon_1} & X\\
\Psi X  \ar[u]_{\epsilon_0} \ar[d]^{\epsilon_1}  & &  \Psi X \ar[d]^{\epsilon_1} \ar[u]_{\epsilon_0}  &   \\
X & \Psi X \ar[l]_{\epsilon_0} \ar[r]^{\epsilon_1} & X
\enddiagram  \]
There is a morphism $u: \Psi  X \to \Psi ^{\times 4}X$ which is induced by the following cone.
\[ \diagram
X & \Psi X \ar[l]_{\epsilon_0} \ar[r]^{\epsilon_1} & X\\
\Psi X  \ar[u]_{\epsilon_0} \ar[d]^{\epsilon_1}  & \Psi X \ar@{=}[l] \ar@{=}[r] \ar[u]_{\eta \epsilon_0} \ar[d]^{\eta \epsilon_1} &  \Psi X \ar[d]^{\epsilon_1} \ar[u]_{\epsilon_0}  &   \\
X & \Psi X \ar[l]_{\epsilon_0} \ar[r]^{\epsilon_1} & X
\enddiagram  \]
Now we factor $u: \Psi X \to \Psi ^{\times 4}X$.
\[ \Psi X \xrightarrow{\lambda_{u}} Mu \xrightarrow{\rho_u} \Psi ^{\times 4} X  \]
Let $\Psi ^\square X$ denote $Mu$,
and let the following diagram denote the cone corresponding to $\rho_u  $. 
\[ \diagram
X & \Psi X \ar[l]_{\epsilon_0} \ar[r]^{\epsilon_1} & X\\
\Psi X  \ar[u]_{\epsilon_0} \ar[d]^{\epsilon_1}  & \Psi ^\square X \ar[u]_{\epsilon_0} \ar[d]^{\epsilon_1} \ar[l]_{\zeta_0} \ar[r]^{\zeta_1} &  \Psi X \ar[d]^{\epsilon_1} \ar[u]_{\epsilon_0}  &   \\
X & \Psi X \ar[l]_{\epsilon_0} \ar[r]^{\epsilon_1} & X
\enddiagram  \]
Note that the object $\Psi ^\square X$ is defined to be the pullback $\pullb{\Psi X}{\Psi (\Psi ^{\times 4} X)}{u}{\epsilon_0}$.

Now we let $\delta_X: \Psi X \to \Psi ^\square X$ be a solution to the following lifting problem.
\[ \diagram
X \ar[d]_{\eta } \ar[rr]^{\lambda_u \eta} & & \Psi ^\square X \ar[d]_{\rho_u}|=^{\epsilon_0 \times \epsilon_1 \times \zeta_0 \times \zeta_1} \\
\Psi X \ar[rr]_-{\eta\epsilon_0 \times 1 \times \eta\epsilon_0 \times 1} \ar@{-->}[urr]^{\delta_X}&& \Psi ^{\times 4} X \enddiagram \]

For any $f: X \to Y$, we need to find a solution to the following lifting problem in order to define $\tau_f: \pullb{X}{\Psi ^\square Y}{ \eta f }{\zeta} \to \Psi (\pullb{X}{\Psi Y}{f}{\epsilon_0}) $.

\[ \diagram
X \ar[d]_{1 \times   \lambda_u \eta f} \ar[rr]^{\eta (1 \times \eta f )} & & \Psi (\pullb{X}{\Psi Y}{f}{\epsilon_0}) \ar[d]^{\epsilon_0 \times \epsilon_1} \\
\pullb{X}{\Psi ^\square Y}{ \eta f }{\zeta_0} \ar[rr]_-{(1_X \times \epsilon_0) \times (1_X \times \epsilon_1)} \ar@{-->}[urr]^{\tau_f}&& (\pullb{X}{\Psi Y}{f}{\epsilon_0}) \times (\pullb{X}{\Psi Y}{f}{\epsilon_0}) \enddiagram \]

Since $R$ is an $\Id$-presentation of $(\coalg{\reltofact(R)}, \alg{\reltofact(R)})$, we know that the right hand map above, $\epsilon_0 \times \epsilon_1$, is in $\alg{\reltofact(R)}$. Thus, we need to show that $1 \times \lambda_u \eta f$ is in $\coalg{\reltofact(R)}$.

To see this, first observe that $\zeta_0 \times \zeta_1: \Psi ^{\times 4} Y \to \Psi Y \times \Psi Y$ is in $\alg{\reltofact(R)}$, since it is given by the following pullback.

\[ \diagram
\Psi ^{ \times 4} Y \ar[d]^{\zeta_0 \times \zeta_1}  \ar[rr]^{\epsilon_0 \times \epsilon_1 } \pullback & & \Psi Y \times \Psi Y \ar[d]^{\epsilon_0 \times \epsilon_1 \times \epsilon_0 \times \epsilon_1} \\
\Psi Y \times \Psi Y \ar[rr]^-{\epsilon_0 \times \epsilon_0 \times \epsilon_1 \times \epsilon_1} & & Y \times Y \times Y \times Y
\enddiagram \]
The right-hand map in the above diagram is in $\alg{\reltofact(R)}$ since it is the product of two maps in $\alg{\reltofact(R)}$, and thus its pullback, $\zeta_0 \times \zeta_1$, is also in $\alg{\reltofact(R)}$. Then the composition $(\zeta_0 \times \zeta_1)\rho_u: \Psi ^\square Y  \to  \Psi Y \times \Psi Y$, which we also denote by $\zeta_0 \times \zeta_1$, is in $\alg{\reltofact(R)}$.

Thus, the following is a factorization of the diagonal $\Delta_{\Psi Y}$ into $\coalg{\reltofact(R)}$ and $\alg{\reltofact(R)})$.
\[ \diagram 
\Psi Y \ar[r]^{\lambda_u} & \Psi ^\square Y \ar[r]^-{\zeta_0 \times \zeta_1}& \Psi Y \times \Psi Y
\enddiagram\]
By Corollary \ref{newfact}, in the following factorization of $\eta f: X \to \Psi Y$,
\[ X \xrightarrow{1 \times \lambda_u \eta f} \pullb{X}{\Psi ^\square Y}{ \eta f }{\zeta_0}  \xrightarrow{\zeta_1} \Psi Y\]
the morphism $1 \times \lambda_u \eta f$ is in $\coalg{\reltofact(R)}$.

Thus, we obtain a lift $\tau_f$ as above.

Then $\tau$ and $\delta$ make $R$ into a homotopical relation where the diagram 
\[ \diagram 
X \ar[r]^-\eta & \Psi ^\square X \ar@<1ex>[r]^{\epsilon_0} \ar[r]|{\epsilon_1} \ar@<-1ex>[r]_{\zeta}& \Psi  X
\enddiagram \]
of Definition \ref{homotopical} is given by the diagram
\[ \diagram 
X \ar[r]^-{\lambda_u \eta} & \Psi ^\square X \ar@<1ex>[r]^{\epsilon_0} \ar[r]|{\epsilon_1} \ar@<-1ex>[r]_{\zeta_0}& \Psi  X
\enddiagram \]
that we have defined here.
\end{proof}

Thus, we have the following theorem.

\begin{thm}\label{idpres2mrs}
Consider a relation $R$ on $\C$. It is an $\Id$-presentation of a weak factorization system if and only if it is a Moore relation system.
\end{thm}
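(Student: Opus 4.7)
The plan is to assemble the theorem from the pieces already established in this section and in Section~\ref{sec:mrs}. Both directions are essentially citation exercises once one sees that the definitions of ``$\Id$-presentation'' and ``Moore relation system'' have each been broken into three constituent properties, and that the implications between these properties have been established individually.

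For the forward direction (Moore relation system implies $\Id$-presentation), I would simply invoke Proposition~\ref{mrs2idpres}. That result hands us both ingredients required by Definition~\ref{relidtypes2}: Theorem~\ref{mrs2ttwfs} guarantees that $\reltofact(R)$ is a type-theoretic weak factorization structure (so in particular a weak factorization structure), and Lemma~\ref{standardfibrationweak} combined with Lemma~\ref{rel2idpres} upgrades this to the statement that $R$ is an $\Id$-presentation of $[\reltofact(R)]$.

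For the reverse direction ($\Id$-presentation implies Moore relation system), I would verify the three pieces of Definition giving a Moore relation system. Assume $R$ is an $\Id$-presentation. By Theorem~\ref{idpres2ttwfs} the factorization $\reltofact(R)$ is a type-theoretic weak factorization structure, which is the hypothesis used throughout. Then:
(i) Proposition~\ref{idpres2trans} constructs the transitivity morphism $\mu_X$ as a lift;
(ii) Proposition~\ref{idpres2sym} constructs the symmetry morphism $\nu_X$ as a lift;
(iii) Proposition~\ref{idpres2homo} constructs the homotopical structure $(\Psi^\square X, \delta_X, \tau_f)$, which is the most involved of the three, building $\Psi^\square X$ via a factorization of the canonical map $u \colon \Psi X \to \Psi^{\times 4} X$ and invoking Corollary~\ref{newfact} to know that the left factor lies in $\coalg{\reltofact(R)}$.
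Taken together, $R$ is transitive, symmetric, and homotopical, hence a Moore relation system.

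No real obstacle remains since every technical step has been carried out in the preceding propositions; the only task is to cite them in the right order. The proof is therefore short, with the main content being a reminder that the nontrivial content lives in Proposition~\ref{idpres2homo} (where the type-theoretic hypothesis on $\reltofact(R)$ and the auxiliary result Corollary~\ref{newfact} are needed to produce $\tau_f$ as a lift against $\epsilon_0\times\epsilon_1$).
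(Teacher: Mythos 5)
Your proposal is correct and follows exactly the paper's own proof: the forward direction is Proposition~\ref{mrs2idpres}, and the reverse direction is the conjunction of Propositions~\ref{idpres2trans}, \ref{idpres2sym}, and \ref{idpres2homo}. Your added commentary about where the real work lies (Proposition~\ref{idpres2homo} and its use of Corollary~\ref{newfact}) is accurate but not needed for the proof itself.
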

\begin{proof}
By Proposition \ref{mrs2idpres}, a Moore relation system is an $\Id$-presentation of the weak factorization system it generates.

By Propositions \ref{idpres2trans}, \ref{idpres2sym}, and \ref{idpres2homo}, an $\Id$-presentation of a weak factorization system is a Moore relation system.
\end{proof}

Now we can restate Theorem \ref{wfs2idpres} in the following way.

\begin{cor}\label{ttwfs2mrs}
Consider a type-theoretic weak factorization structure $W$. Then the relation $\facttorel (W)$ is a Moore relation system which generates the weak factorization system represented by $W$. Thus, every type-theoretic weak factorization system can be generated by a Moore relation system.
\end{cor}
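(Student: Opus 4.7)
The plan is to chain together the three main theorems that precede this corollary: Theorem \ref{wfs2idpres}, Theorem \ref{idpres2mrs}, and Corollary \ref{wfs2wfscor}. Since all the hard work has been done, the proof is essentially a bookkeeping exercise, and no new constructions are required.

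First, I would invoke Theorem \ref{wfs2idpres} on the given type-theoretic weak factorization structure $W$ to conclude that $\facttorel(W)$ is an $\Id$-presentation of the weak factorization system $[W]$. Next, I would apply Theorem \ref{idpres2mrs}, which states that a relation is an $\Id$-presentation of a weak factorization system if and only if it is a Moore relation system; this immediately upgrades $\facttorel(W)$ from an $\Id$-presentation to a Moore relation system. Finally, to verify the claim that this Moore relation system \emph{generates} the weak factorization system represented by $W$, I would cite Corollary \ref{wfs2wfscor}, which tells us that $\reltofact\facttorel(W)$ is a weak factorization structure equivalent to $W$, so $[\reltofact\facttorel(W)] = [W]$.

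The second sentence of the corollary, that every type-theoretic weak factorization system can be generated by a Moore relation system, then follows from Proposition \ref{equivwfs}: any type-theoretic weak factorization system arises as $[W]$ for some type-theoretic weak factorization structure $W$ (indeed, it is the isomorphism class of such a structure in $|\ttWFS[\C]|$), so the construction above exhibits a Moore relation system generating it.

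Since every step is a direct appeal to an established result, I anticipate no real obstacle; the only delicate point is purely notational, namely to keep straight the distinction between the weak factorization \emph{structure} $W$ and the weak factorization \emph{system} $[W]$ it represents, and to check that the word ``generates'' in the statement matches the relation $[\reltofact(-)]=(-)$ supplied by Corollary \ref{wfs2wfscor}.
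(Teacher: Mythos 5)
Your proposal is correct and follows essentially the same route as the paper, which proves this corollary by combining Theorem \ref{wfs2idpres} with Theorem \ref{idpres2mrs}. The extra care you take in unwinding the word ``generates'' via Corollary \ref{wfs2wfscor} and Proposition \ref{equivwfs} is sound but is left implicit in the paper's one-line proof.
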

\begin{proof}
This is Theorem \ref{wfs2idpres} with `Moore relation system' substituted for `$\Id$-presentation' as justified by Theorem \ref{idpres2mrs}.
\end{proof}

\begin{exmp}
Consider Example \ref{cisex}. Then given a Cisinski model structure
$(\C, \F, \W)$ on a topos $\M$, the weak factorization system $(\C \cap \W \cap \M_{\F} , \F \cap \M_{\F})$ is generated by a Moore relation system. In particular, the weak factorization systems in the category of Kan complexes, the category of quasicategories, and that of cubical sets are generated by Moore relation systems.
\end{exmp}



To conclude this section, we show that $|  \facttorel \reltofact  | :| \IdPres[\C] |\to| \IdPres[\C] |$ is isomorphic to the identity functor. We have shown that $| \reltofact \facttorel  |:| \ttWFS[\C] |\to| \ttWFS[\C] |$ is also isomorphic to the identity functor (Corollary \ref{wfs2idpresfunctors}). Thus, this will show that $|   \facttorel |$ and $|  \reltofact |$ form an equivalence
$| \IdPres[\C] | \simeq | \ttWFS[\C] |$.

\begin{prop}\label{idpresiso}
The functor $|  \facttorel \reltofact  | :| \IdPres[\C] |\to| \IdPres[\C] |$ is isomorphic to the identity functor.
\end{prop}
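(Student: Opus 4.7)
The plan is to show, for each $\Id$-presentation $R \in \IdPres[\C]$, the existence of morphisms $R \leftrightarrows \facttorel \reltofact(R)$ in $\IdPres[\C]$. The key observation is that, for each object $X$ of $\C$, both $R(X)$ and $\facttorel\reltofact(R)(X)$ are factorizations of the diagonal $\Delta_X \colon X \to X \times X$ in the weak factorization system generated by $R$.

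For $R(X)$ this is precisely the content of $R$ being an $\Id$-presentation, as articulated in Lemma~\ref{rel2idpres}: $\eta_X$ lies in $\coalg{\reltofact(R)}$ and $(\epsilon_{0X},\epsilon_{1X})$ lies in $\alg{\reltofact(R)}$. For $\facttorel\reltofact(R)(X)$, unwinding Construction~\ref{con:fact2relfact}, one sees this relation is obtained by factoring $\Delta_X$ through $\reltofact(R)$ as $X \xrightarrow{\lambda \Delta_X} M\Delta_X \xrightarrow{\rho \Delta_X} X \times X$, which is again a valid factorization in the same weak factorization system since $\reltofact(R)$ is a type-theoretic weak factorization structure by Theorem~\ref{idpres2ttwfs}.

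Given two such factorizations of the same morphism, the lifting property of $(\coalg{\reltofact(R)}, \alg{\reltofact(R)})$ supplies bidirectional lifts $\alpha_X \colon \Psi X \to M\Delta_X$ and $\beta_X \colon M\Delta_X \to \Psi X$ via the squares
\[ \diagram
X \ar[r]^-{\lambda \Delta_X} \ar[d]_{\eta_X} & M\Delta_X \ar[d]^{\rho \Delta_X} \\
\Psi X \ar[r]_-{(\epsilon_{0X},\epsilon_{1X})} \ar@{-->}[ur]^{\alpha_X} & X \times X
\enddiagram
\qquad
\diagram
X \ar[r]^-{\eta_X} \ar[d]_{\lambda \Delta_X} & \Psi X \ar[d]^{(\epsilon_{0X},\epsilon_{1X})} \\
M\Delta_X \ar[r]_-{\rho \Delta_X} \ar@{-->}[ur]^{\beta_X} & X \times X
\enddiagram \]
The top triangle of each square gives compatibility with the $\eta$-component of the target relation, and the bottom triangle, read componentwise via the two projections $X \times X \to X$, gives compatibility with both $\epsilon_i$-components. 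Hence $\alpha_X$ and $\beta_X$ are exactly the $X$-components of morphisms of relations $R \to \facttorel\reltofact(R)$ and $\facttorel\reltofact(R) \to R$. Choosing these lifts independently at each $X$ assembles them into (unnatural) transformations in $\rel[\C]$. Because $\facttorel\reltofact(R)$ is itself an $\Id$-presentation (Theorem~\ref{wfs2idpres} applied to the type-theoretic weak factorization structure $\reltofact(R)$) and $\IdPres[\C] \hookrightarrow \rel[\C]$ is full, these morphisms live in $\IdPres[\C]$.

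Passing to the proset truncation identifies $[R]$ with $[\facttorel\reltofact(R)]$ for every $R$; since $|\IdPres[\C]|$ is a proset, these identifications automatically organize into the desired isomorphism of functors $1 \cong |\facttorel\reltofact|$. The only real obstacle in the argument is conceptual --- recognizing that $\facttorel\reltofact(R)(X)$ is nothing but the relation-shaped presentation of the factorization of $\Delta_X$ by $\reltofact(R)$ --- and the rest is the standard two-way-lift mechanism for two factorizations of a morphism through a common weak factorization system, of the same flavor as in the proof of Proposition~\ref{equivwfs}.
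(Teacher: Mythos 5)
Your proposal is correct and follows essentially the same route as the paper: both obtain the two morphisms $R \leftrightarrows \facttorel\reltofact(R)$ componentwise from exactly the same pair of lifting squares (using $\eta_X \in \coalg{\reltofact(R)}$ against $\rho(\Delta_X) \in \alg{\reltofact(R)}$, and $\lambda(\Delta_X)$ against $\epsilon_{0X}\times\epsilon_{1X}$), and then invoke the proset structure of $|\IdPres[\C]|$ to assemble the resulting equivalences into a natural isomorphism with the identity. Your added remarks --- that the triangles of the lifting squares verify compatibility with the $\eta$- and $\epsilon_i$-components, and that fullness of $\IdPres[\C] \hookrightarrow \rel[\C]$ places the morphisms in the right category --- are details the paper leaves implicit, but the argument is the same.
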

\begin{proof} 
We need to provide an equivalence between any $ R$ in $\IdPres[\C]$ and $ \facttorel \reltofact (R)$. Since $| \IdPres[\C] |$ is a proset, this will automatically assemble into a natural isomorphism $1 \cong |  \facttorel \reltofact  | $.

Let $ R X$ be denoted by the following diagram for any $X$ in $\C$.
\[ \diagram
 X   \ar[r]|-{\eta_X} &   \Psi  X  \ar@<1.5ex>[l]^-{\epsilon_{1X} } \ar@<-1.5ex>[l]_-{\epsilon_{0X} }
\enddiagram \]
Then $ \facttorel \reltofact (R X) $ gives the following diagram
\[ \diagram
 X   \ar[rr]|-{\lambda(\Delta_X)} & & \pullb{X}{ \Psi ( X \times X) }{\Delta}{\epsilon_0} \ar@<1.5ex>[ll]^-{\pi_1 \rho(\Delta_X) } \ar@<-1.5ex>[ll]_-{\pi_0 \rho(\Delta_X)}
\enddiagram \]
where $\lambda$ denotes $\lambda_{\reltofact ( R)}$ and $\rho$ denotes $\rho_{\reltofact (  R)}$.

Now a morphism $ R \to \facttorel \reltofact (R)$ consists of a natural transformation $ R(X) \to \facttorel \reltofact (R)(X)$ at each $X$, as displayed below, which, in turn, consists of the identity on $X$ and a morphism $\tau_X: \Psi X \to \pullb{X}{ \Psi ( X \times X) }{\Delta}{\epsilon_0}$.
\[ \diagram
 X   \ar[rr]|-{\eta_X} \ar@{=}[dd]&&   \Psi  X  \ar@<1.5ex>[ll]^-{\epsilon_{1X} } \ar@<-1.5ex>[ll]_-{\epsilon_{0X} } \ar@{-->}[dd]^{\tau_X} \\ \\
  X   \ar[rr]|-{\lambda(\Delta_X) } & & \pullb{X}{ \Psi ( X \times X) }{\Delta}{\epsilon_0} \ar@<1.5ex>[ll]^-{\pi_1 \rho(\Delta_X)  } \ar@<-1.5ex>[ll]_-{\pi_0 \rho(\Delta_X) }
\enddiagram \]
But we can obtain the morphism $\tau_X$ as a lift in the diagram below.
\[ \diagram
X \ar[r]^-{\lambda (\Delta_X)} \ar[d]_\eta & \pullb{X}{ \Psi ( X \times X) }{\Delta}{\epsilon_0} \ar[d]^{\rho(\Delta_X)} \\
\Psi X \ar[r]_{\epsilon_0 \times \epsilon_1} \ar@{-->}[ur]^{\tau_X} & X \times X
\enddiagram\]
since $\eta$ is in $\coalg{\reltofact(R)}$ and $\rho(\Delta_X)$ is in $\alg{\reltofact(R)}$.

Similarly, we can get a morphism $ \facttorel \reltofact (R) \to  R$ by solving the following lifting problem for each object $X$.
\[ \diagram
X \ar[d]_{\lambda (\Delta_X)} \ar[r]^-\eta & \Psi X \ar[d]^{\epsilon_0 \times \epsilon_1} \\
\pullb{X}{ \Psi ( X \times X) }{\Delta}{\epsilon_0} \ar[r]_-{\rho(\Delta_X)} \ar@{-->}[ur] & X \times X
\enddiagram\]
These lifts exist since $\lambda(\Delta_X)$ is in $\coalg{\reltofact(R)}$ and $\epsilon_0 \times \epsilon_1$ is in $\alg{\reltofact(R)}$.
\end{proof}

\section{Models of $\Id$-types}\label{sec:idtypes}
In this section, we show that a weak factorization system on a finitely complete category $\C$ models $\Id$-types if and only if it models $\Id$-types on objects. We use this result to also show that if a display map category $(\C,\D)$ models $\Id$-types, then $(\C,(^\boxslash \D)^\boxslash)$ also models $\Id$-types regardless if they are functorial (this is not used in our main result, but was mentioned in Section \ref{sec:dmc} to motivate our work).

Consider $\Id$-presentation $R$ which at each object $X$ of $\C$ has components denoted as follows.
\[ \diagram
 X   \ar[r]|-{\eta_X} &  \Psi X , \ar@<1.5ex>[l]^-{\epsilon_{1X} } \ar@<-1.5ex>[l]_-{\epsilon_{0X} }
\enddiagram \]
For any object $Y$ of $\C$, let $\A/ Y$ denote the full subcategory of $\C / Y$ spanned by $\alg{\reltofact(R)}$. Let $\dom_Y: \A / Y \to \C$ denote the domain functor. The weak factorization structure $\reltofact (R)$ induces a weak factorization structure $\reltofact (R)_Y$ \cite[Thm.~15.3.6]{MP12} which takes a morphism $\alpha : f \to g$ of $\A/Y$ to the following.
\[ \diagram
W \ar[drr]_f \ar[rr]^-{1_W \times  \eta \dom_Y (\alpha)} && W \times_X \Id(X) \ar[d]|(.45){g\epsilon_1 \pi_{\Id(X)}} \ar[rr]^-{\epsilon_1 \pi_{\Id(X)}} && X\ar[dll]^-{g } \\
&& Y
\enddiagram \]
The classes of coalgebras and algebras of this weak factorization structure in $\A/Y$ are exactly the preimages of those in $\C$: $\coalg{\reltofact (R)_Y}$ = $\dom_Y^{-1} (\coalg{\reltofact (R)})$ and $\alg{\reltofact (R)_Y}= \dom_Y^{-1}(\alg{\reltofact (R)})$.

\begin{lem}\label{Idtypesonobj2morlem}
The weak factorization structure $\reltofact (R)_Y$ on $\A/Y$ is type-theoretic.
\end{lem}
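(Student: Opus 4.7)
The plan is to verify the two defining conditions of ``type-theoretic'' for $\reltofact(R)_Y$---namely, fibrancy of every object and stability of the left class under pullback along the right class---by reducing each to the corresponding property of $\reltofact(R)$ on $\C$. We already know $\reltofact(R)$ is type-theoretic on $\C$ by Theorem \ref{idpres2ttwfs}, and the identifications $\coalg{\reltofact(R)_Y} = \dom_Y^{-1}(\coalg{\reltofact(R)})$ and $\alg{\reltofact(R)_Y} = \dom_Y^{-1}(\alg{\reltofact(R)})$ recorded just before the statement mean that both conditions can be checked after applying $\dom_Y$.

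For fibrancy: the terminal object of $\A/Y$ is $1_Y: Y \to Y$, and for any $f: W \to Y$ in $\A/Y$ the unique morphism $f \to 1_Y$ in $\A/Y$ is $f$ itself, viewed as a morphism in $\C$. By definition of $\A/Y$, we have $f \in \alg{\reltofact(R)}$, so $\dom_Y(f \to 1_Y) = f$ is in $\alg{\reltofact(R)}$, hence $(f \to 1_Y) \in \alg{\reltofact(R)_Y}$. Thus every object of $\A/Y$ is fibrant in $\reltofact(R)_Y$.

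For stability under pullback, suppose $\ell : f \to h$ is in $\coalg{\reltofact(R)_Y}$ and $\alpha : g \to h$ is in $\alg{\reltofact(R)_Y}$ (so $\dom_Y\ell$ and $\dom_Y\alpha$ lie in $\coalg{\reltofact(R)}$ and $\alg{\reltofact(R)}$ respectively), and form the pullback of $\ell$ along $\alpha$ in $\A/Y$. Pullbacks in $\A/Y$ are computed as pullbacks in $\C$ on domains: given $P = \dom f \times_{\dom h} \dom g$ in $\C$, the induced map $P \to Y$ factors as a pullback of $\dom_Y\alpha$ followed by $f$, both of which are in $\alg{\reltofact(R)}$ (the first because $\alg{\reltofact(R)}$ is closed under pullback, the second by hypothesis), and then their composite is also in $\alg{\reltofact(R)}$ by closure under composition; hence the pullback object indeed lies in $\A/Y$. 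Applying $\dom_Y$, the pullback of $\alpha^*\ell$ in $\C$ is the pullback of $\dom_Y\ell \in \coalg{\reltofact(R)}$ along $\dom_Y\alpha \in \alg{\reltofact(R)}$, which lies in $\coalg{\reltofact(R)}$ because $\reltofact(R)$ is type-theoretic. Therefore $\alpha^*\ell \in \coalg{\reltofact(R)_Y}$.

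The only non-trivial point is the verification that $\A/Y$ has the required pullbacks and that $\dom_Y$ preserves them; this is the step where the closure of $\alg{\reltofact(R)}$ under pullback and composition in $\C$ is essential, and once granted, both conditions translate transparently across $\dom_Y$ to the established type-theoreticity of $\reltofact(R)$ on $\C$.
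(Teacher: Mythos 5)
Your proof is correct and takes essentially the same route as the paper's: fibrancy via the observation that the map $f \to 1_Y$ has $\dom_Y$-image $f \in \alg{\reltofact(R)}$, and pullback-stability via $\dom_Y(r^*\ell) = \dom_Y(r)^*\dom_Y(\ell)$ together with the type-theoreticity of $\reltofact(R)$ on $\C$. The paper's version is terser (it leaves implicit the verification that pullbacks in $\A/Y$ exist, land in $\A/Y$, and are computed on domains), so your extra detail is a harmless elaboration rather than a different argument.
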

\begin{proof}
First, note that $1_Y \in \alg{\reltofact (R)}$, so $1_Y$ is a terminal object of $\A/Y$. For any $f \in \A/Y$, the morphism to this terminal object is $f: f \to 1_Y$, whose image under $\dom_Y$ is $f \in \alg{\reltofact (R)}$. Thus, $f$ is fibrant.

Now consider any $\ell \in \coalg{\reltofact (R)_Y}$ and $r \in \alg{\reltofact (R)_Y}$ who share a codomain. We have that $\dom_Y(r^* \ell) = \dom_Y(r)^* \dom_Y(\ell) \in \coalg{\reltofact (R)}$ since $\reltofact (R)$ is type-theoretic (Theorem \ref{idpres2ttwfs}). Therefore, $r^* \ell \in \coalg{\reltofact (R)_Y}$.
\end{proof}

\begin{prop}\label{Idtypesonobj2mor}
Consider a weak factorization system with an $\Id$-presentation $R$. Then $(\C, \alg{\reltofact(R)})$ models $\Id$-types.
\end{prop}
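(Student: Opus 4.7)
The plan is to apply Lemma~\ref{Idtypesonobj2morlem} and Theorem~\ref{wfs2idpres} slicewise. For each $Y\in\C$, Lemma~\ref{Idtypesonobj2morlem} gives that $\reltofact(R)_Y$ is a type-theoretic weak factorization structure on the finitely complete category $\A/Y$ (it has finite limits because $\alg{\reltofact(R)}$ is closed under composition and pullback, so $\A/Y$ is closed under finite limits in $\C/Y$). Applying Theorem~\ref{wfs2idpres} inside $\A/Y$ produces an $\Id$-presentation $S_Y := \facttorel(\reltofact(R)_Y)$ of the weak factorization system $[\reltofact(R)_Y]$ on $\A/Y$. For each object $f : X \to Y$ of $\A/Y$ (that is, each display map), $S_Y(f)$ provides a factorization $X \xrightarrow{r_f} \Id(f) \xrightarrow{\epsilon_f} X \times_Y X$ of the fiber diagonal in $\C/Y$; I define the candidate identity type $I_Y(f)$ to be this data.

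Condition (1) of Definition~\ref{Idtypes} is immediate: by construction $\epsilon_f \in \alg{\reltofact(R)_Y}$, and since $\alg{\reltofact(R)_Y} = \dom_Y^{-1}(\alg{\reltofact(R)}) = \dom_Y^{-1}(\D)$, the underlying $\C$-morphism $\epsilon_f$ lies in $\D$. Condition (2) requires that $\alpha^* r_f \in {}^\boxslash\D$ for every $\alpha : A \to X$ in $\C$ and for $i=0,1$. The $\Id$-presentation $S_Y$ on $\A/Y$ automatically gives this whenever $\alpha$ corresponds to a morphism in $\A/Y$, that is, whenever $f\alpha \in \D$.

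To extend condition (2) to an arbitrary $\alpha : A \to X$ in $\C$, I plan to reduce to the display-map case as follows. Factor $f\alpha : A \to Y$ in the weak factorization structure $\reltofact(R)$ as $A \xrightarrow{\lambda(f\alpha)} M(f\alpha) \xrightarrow{\rho(f\alpha)} Y$; then $\rho(f\alpha) \in \D$, and since $f \in \D$, the lifting property furnishes a morphism $\beta : M(f\alpha) \to X$ with $\beta\lambda(f\alpha) = \alpha$ and $f\beta = \rho(f\alpha)$. In particular, $\beta$ is a morphism $\rho(f\alpha) \to f$ in $\A/Y$, so $\beta^* r_f \in {}^\boxslash\D$ by the $\Id$-presentation $S_Y$. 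Since $\alpha = \beta\lambda(f\alpha)$ and pullback is functorial, $\alpha^* r_f = \lambda(f\alpha)^*(\beta^* r_f)$. The main obstacle is therefore to show that pulling back the section $\beta^* r_f$ along $\lambda(f\alpha) \in {}^\boxslash\D$ yields a morphism still in ${}^\boxslash\D$. I expect to handle this by using that $\beta^* r_f$ is a section of a morphism in $\D$, together with the Moore relation structure on $R$ made available by Theorem~\ref{idpres2mrs}; explicitly, the coalgebra-type data on $r_f = \lambda_{\reltofact(R)}(\Delta_{X/Y})$ produced in Proposition~\ref{weakleftalg} can be transported along $\alpha$ to furnish a lift in any square whose right side is in $\D$, giving $\alpha^* r_f \in {}^\boxslash\D$ directly.
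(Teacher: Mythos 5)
Your setup is the same as the paper's: form the sliced weak factorization structure $\reltofact(R)_Y$ on $\A/Y$, observe via Lemma~\ref{Idtypesonobj2morlem} that it is type-theoretic, and take $I_Y(f) = \facttorel(\reltofact(R)_Y)(f)$ as the candidate identity type. Condition (1) is indeed immediate, and you correctly isolate the real difficulty: condition (2) demands that $\alpha^* r_f \in {}^\boxslash\D$ for \emph{arbitrary} $\alpha : A \to X$ in $\C$, whereas the sliced $\Id$-presentation only speaks about morphisms of $\A/Y$. But neither of your two proposed resolutions closes this gap. The first route reduces to pulling back $\beta^* r_f$ along $\lambda(f\alpha)$, i.e.\ along a morphism of ${}^\boxslash\D$; the type-theoretic hypothesis gives stability of the left class under pullback along the \emph{right} class only, and left classes are not in general stable under pullback along left maps, so this reduction lands you somewhere the hypotheses do not reach. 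The second route --- ``the coalgebra-type data on $r_f$ can be transported along $\alpha$'' --- is exactly the assertion to be proved, stated without an argument. Note also that $\alpha^* r_f$ is not of the form $\lambda_{\reltofact(R)}(g)$ for any $g$ (the pullback is taken over $X$ along $\pi_i\epsilon_1\pi_1$, not over $X\times_Y X$ along $\epsilon_0\pi$), so Proposition~\ref{weakleftalg} does not apply to it directly, and you have not equipped the one-object relation on $X$ given by $(r_f, \pi_0\epsilon_1\pi_1, \pi_1\epsilon_1\pi_1)$ with a homotopical structure that would let you invoke it.

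The missing ingredient is Corollary~\ref{newfact}, which the paper proved (via the generalized Theorem~\ref{wfs2wfs}, whose substance is Proposition~\ref{lambdacoalg}) precisely for this purpose. One regards the factorization $X \xrightarrow{r_f} \Id_Y(X) \xrightarrow{\epsilon_f} X\times_Y X$ as a relation on the single object $X$ of $\C$ whose unit $r_f$ is in ${}^\boxslash\D$ and whose map $\pi_0\epsilon_1\pi_1 \times \pi_1\epsilon_1\pi_1$ to $X\times X$ is in $\D$; Corollary~\ref{newfact} then asserts that for \emph{any} $\alpha : A \to X$ the induced map $1_A \times r_f\alpha$, which is exactly $\alpha^* r_f$ for $i=0$, lies in ${}^\boxslash\D$. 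The case $i=1$ follows by applying the involution swapping $\epsilon_0$ and $\epsilon_1$ (as in the proof of Lemma~\ref{rel2idpres}), a step your proposal also omits. Without Corollary~\ref{newfact} or an equivalent lifting argument, the proof is incomplete at its central point.
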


\begin{proof}
Since the weak factorization structure $\reltofact (R)_Y$ on $\C/Y$ is type-theoretic, it
has an $\Id$-presentation $  \facttorel (\reltofact (R)_Y)$ which at each $f: X \to Y$ gives the following relation (depicted as a diagram in $\C$).
\[ \diagram
 X   \ar[rr]|-{1_X \times \eta \Delta} \ar@/_/[dr]_{f} & & X {_{\Delta}\times_{\epsilon_0}} \Id(X \times_Y X)   \ar@<1.5ex>[ll]^-{\pi_1 \epsilon_1 \pi_1} \ar@<-1.5ex>[ll]_-{\pi_0 \epsilon_1 \pi_1} \ar@/^/[dl] \\
 & Y
\enddiagram \tag{$*$}\]

Now we show that the collection of these $\Id$-presentations is a model of $\Id$-types in the display map category $(\C, \alg{\reltofact(R)})$. We just need to check that any pullback $\alpha^* (1_X \times \eta \Delta)$ as shown below
of $1_X \times \eta \Delta$ along any morphism 
$\alpha: A \to X$ for $i = 0,1$ is in 
$\coalg{\reltofact (R)}$. 
\[ \diagram 
& X {_{\Delta}\times_{\epsilon_0}} \Id(X \times_Y X) {_{\pi_i \epsilon_1} \times_{\alpha}} A \ar[rr] \ar[dd]|!{"2,1";"2,4"}\hole &&{X {_{\Delta}\times_{\epsilon_0}} \Id(X \times_Y X)} \ar[dd]^{\pi_i \epsilon_1 \pi_1} \\
A \ar[rr] \ar@{=}[dr] \ar[ur]^-{\alpha^* (1_X \times \eta \Delta)} && X \ar@{=}[dr] \ar[ur]^-{1_X \times \eta \Delta} & \\
& A \ar[rr]^\alpha & & X
\enddiagram \]

Note that the image of diagram $(*)$ above is a relation of $X$ in $\C$ where $1_X \times \eta \Delta \in \coalg{\reltofact( R)}$ and $\epsilon_1 \pi_1 = \pi_0 \epsilon_1 \pi_1 \times \pi_1  \epsilon_1 \pi_1 \in \alg{\reltofact( R)}$. Then by Corollary \ref{newfact}, the pullbacks shown above are in $\coalg{\reltofact( R)}$ when $i=0$. Similarly, by considering the involution $I$ which swaps $\epsilon_0$ and $\epsilon_1$ (described in the proof of \ref{rel2idpres}) applied to this relation, the pullbacks shown above are in $\coalg{\reltofact( R)}$ when $i=1$ by Corollary \ref{newfact}.
\end{proof}

\begin{cor}\label{dropfunc}
Consider a display map category $(\C,\D)$ which models $\Sigma$- and $\Id$-types. Then $(\C,(^\boxslash \D)^\boxslash)$ also models $\Id$-types.
\end{cor}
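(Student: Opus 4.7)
The plan is to route the statement through the two key results already proved: Emmenegger's theorem cited in Section~\ref{sec:dmc} and Proposition~\ref{Idtypesonobj2mor}. First I would extract from the hypothesized model of $\Id$-types in $(\C,\D)$ its underlying model of $\Id$-types of objects, namely the relation $I$ on $\C$ whose value at each $X$ is the $\Id$-type of $X$ (equivalently of $1_X$ in $\C/1$). By \cite[Thm.~2.8]{Emm14}, as recalled in Section~\ref{sec:dmc}, the factorization $\reltofact(I)$ is then a weak factorization structure whose underlying weak factorization system is $({^\boxslash \D},({^\boxslash \D})^\boxslash)$.

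Next I would verify that $I$ is in fact an $\Id$-presentation of this weak factorization system, so that Proposition~\ref{Idtypesonobj2mor} applies. By Lemma~\ref{rel2idpres}, since $\reltofact(I)$ is already known to be a weak factorization structure, it suffices to check that for every object $X$ the map $\epsilon_X : \Id(X) \to X \times X$ lies in $\alg{\reltofact(I)} = ({^\boxslash \D})^\boxslash$. But $\epsilon_X$ is in $\D$ by clause (1) of Definition~\ref{Idtypes}, and the inclusion $\D \subseteq ({^\boxslash \D})^\boxslash$ is automatic from the definitions of $^\boxslash$ and retract closure. So $I$ is an $\Id$-presentation of $({^\boxslash \D},({^\boxslash \D})^\boxslash)$.

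Finally, Proposition~\ref{Idtypesonobj2mor} applied to the $\Id$-presentation $R = I$ produces a model of $\Id$-types in the display map category $(\C,\alg{\reltofact(I)}) = (\C,({^\boxslash \D})^\boxslash)$, which is exactly what the corollary asserts.

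There is no real obstacle: the substantive content has already been absorbed into Proposition~\ref{Idtypesonobj2mor}, which upgrades $\Id$-types of objects to $\Id$-types in every slice, and into the proof of \cite[Thm.~2.8]{Emm14}, which identifies the weak factorization system generated by the $\Id$-types of objects. The corollary is a short assembly: the only non-bookkeeping step is the trivial observation $\D \subseteq ({^\boxslash \D})^\boxslash$ used to feed $I$ into Lemma~\ref{rel2idpres}, and notably the hypothesis of functoriality on $\Id$-types is not needed at any point, which is precisely why the corollary holds as stated.
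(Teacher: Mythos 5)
Your overall strategy is the paper's: pass to the model $I$ of $\Id$-types of objects, use \cite[Thm.~2.8]{Emm14} to identify $[\reltofact(I)]$ with $({^\boxslash \D}, ({^\boxslash \D})^\boxslash)$, show $I$ is an $\Id$-presentation, and finish with Proposition~\ref{Idtypesonobj2mor}. The gap is in the middle step: Lemma~\ref{rel2idpres} is stated for a relation $R$ such that $\reltofact(R)$ is a \emph{type-theoretic} weak factorization structure, not merely a weak factorization structure, and you invoke it with only the latter in hand. The stronger hypothesis is genuinely used in that lemma's proof: to show that the pullbacks $\alpha^* r_X$ in the $i=1$ case of Definition~\ref{Idtypes}(2) are left maps, the paper applies the involution swapping $\epsilon_0$ and $\epsilon_1$ and then Theorem~\ref{wfs2wfs}, which requires fibrancy of all objects and stability of the left class under pullback along the right class. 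At the point where you cite the lemma, type-theoreticity of $\reltofact(I)$ has not been established --- indeed, ``$\Id$-presentation implies type-theoretic'' is one of the nontrivial implications of the main theorem, so assuming it here would be circular unless you supply it separately.

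The gap is repairable in two ways. You could first prove that $\reltofact(I)$ is type-theoretic: fibrancy is Proposition~\ref{relfact2fib}, and since $\epsilon_X \in \D \subseteq ({^\boxslash \D})^\boxslash = \alg{\reltofact(I)}$, the lifting argument of Proposition~\ref{idpres2sym} produces a symmetry $\nu$, whence Theorem~\ref{symthmweak} gives the pullback-stability; then Lemma~\ref{rel2idpres} applies as you intended. But the paper's route is shorter and avoids Lemma~\ref{rel2idpres} altogether: it verifies Definition~\ref{relidtypes2} directly, observing that \emph{both} clauses of Definition~\ref{Idtypes} transfer for free from $(\C,\D)$ to $(\C,({^\boxslash \D})^\boxslash)$ --- clause (1) because $\D \subseteq ({^\boxslash \D})^\boxslash$, as you note, and clause (2) because ${^\boxslash}\bigl(({^\boxslash \D})^\boxslash\bigr) = {^\boxslash \D}$, so the maps $\alpha^* r_X$ that are already in ${^\boxslash \D}$ need no further argument. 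Your closing claim that functoriality of the $\Id$-types is never needed is correct and matches the paper's intent.
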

\begin{proof}
Let $\{I_Y\}_{Y \in \C}$ be the model of $\Id$-types in $(\C,\D)$. We obtain a model $I_*$ of $\Id$-types on objects in $(\C, \D)$ (for a terminal object $*$ in $\C$). But $I_*$ is trivially also a model of $\Id$-types on objects in $(\C, (^\boxslash \D)^\boxslash)$. By Proposition \ref{Idtypesonobj2mor}, $(\C, (^\boxslash \D)^\boxslash)$ also models $\Id$-types.
\end{proof}

\section{The main result}
\label{sec:summary}

In Section \ref{sec:relandfact}, we defined the following diagram of categories.
\[
 \diagram
 \fIdPres[\C] \ar@{^{ (}->}[r] \ar@{^{ (}->}[d]  & \frel[\C]  \ar@{^{ (}->}[d]  \ar@<0.5ex>[r]^-{\reltofact} &  \ffact[\C] \ar@{^{ (}->}[d] \ar@<0.5ex>[l]^-{\facttorel} & \fttWFS[\C] \ar@{_{ (}->}[l]  \ar@{^{ (}->}[d]  \\
 \IdPres[\C]  \ar@{^{ (}->}[r] &   \rel[\C] \ar@<0.5ex>[r]^-{\reltofact}  & \fact[\C]  \ar@<0.5ex>[l]^-{\facttorel}&   \ttWFS[\C] \ar@{_{ (}->}[l]
\enddiagram 
\]

In Section \ref{sec:mrs}, we defined (strict) Moore relation systems. We showed that strict Moore relation systems generate type-theoretic, algebraic weak factorization systems, and that Moore relation systems generate type-theoretic weak factorization systems.

In Sections \ref{sec:bigdiagram} and \ref{sec:idpresandmrs}, we showed the following theorem.

\begin{thm}\label{mainthm}
Consider a category $\C$ with finite limits.
The functors $  \facttorel  $ and $  \reltofact $ described above restrict to functors shown below.
\[
 \diagram
 \fIdPres[\C]  \ar@{^{ (}->}[d]  \ar@<0.5ex>[r]^-{\reltofact} & \ar@{^{ (}->}[d] \ar@<0.5ex>[l]^-{\facttorel}  \fttWFS[\C]  \\
 \IdPres[\C]   \ar@<0.5ex>[r]^-{\reltofact} & \ar@<0.5ex>[l]^-{\facttorel}  \ttWFS[\C]  
\enddiagram 
\]
Furthermore, after application of the proset truncation, these are equivalences.
\[
 \diagram
| \fIdPres[\C] | \ar@{^{ (}->}[d]  \ar@<0.75ex>[r]^-{|\reltofact|} \ar@{}[r]|-\sim & \ar@{^{ (}->}[d] \ar@<0.75ex>[l]^-{|\facttorel|}  {| \fttWFS[\C] | } \\
 |\IdPres[\C]|   \ar@<0.75ex>[r]^-{|\reltofact|} \ar@{}[r]|-\sim & \ar@<0.75ex>[l]^-{|\facttorel|} {| \ttWFS[\C] | }
 \enddiagram \tag{$*$}
\]
\end{thm}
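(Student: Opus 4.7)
The final theorem is a repackaging of results established throughout Sections \ref{sec:bigdiagram} and \ref{sec:idpresandmrs}, so the plan is assembly rather than fresh argument. First I would handle the lower row of diagram $(*)$: Corollary \ref{idpres2ttwfsfunctors} gives the restriction $\reltofact \colon \IdPres[\C] \to \ttWFS[\C]$, and Corollary \ref{wfs2idpresfunctors} supplies both the restriction $\facttorel \colon \ttWFS[\C] \to \IdPres[\C]$ and the natural isomorphism $|\reltofact \facttorel| \cong 1_{|\ttWFS[\C]|}$. Proposition \ref{idpresiso} provides the complementary natural isomorphism $|\facttorel \reltofact| \cong 1_{|\IdPres[\C]|}$. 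Since $|\IdPres[\C]|$ and $|\ttWFS[\C]|$ are prosets, two such mutually inverse natural isomorphisms are exactly the data of an equivalence of categories.

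Next I would address the upper, functorial row. The crucial observation is that the adjoint triple connecting $\rel[\C]$, $\relfact[\C]$, and $\fact[\C]$ respects the subcategories of functorial sections: Lemma \ref{lem:adj} already records that $\sigma_*$ and $\sigma^*$ restrict to $\frel[\C] \rightleftarrows \frelfact[\C]$, and the analogous statements for $\iota^*$ and $\iota_*$ are built into Definition \ref{funcdmfs} and diagram \eqref{diag:relrelfactfact}. Consequently $\reltofact = \iota_* \sigma_*$ sends $\fIdPres[\C]$ into $\fttWFS[\C]$ and $\facttorel = \sigma^* \iota^*$ sends $\fttWFS[\C]$ into $\fIdPres[\C]$. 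The two natural isomorphisms used in the bottom row restrict to the functorial subcategories, since the relevant components in Proposition \ref{idpresiso} and Corollary \ref{wfs2idpresfunctors} are defined objectwise by lifting and the proset truncation erases any failure of naturality of those chosen lifts.

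The main obstacle to this theorem was overcome earlier, in two places: Proposition \ref{lambdacoalg}, where one shows $\lambda_f \in \coalg{W}$ for every morphism $f$ by using the pullback-stability of $\coalg{W}$ along $\alg{W}$ in an essential way; and Proposition \ref{idpres2homo}, where one manufactures the object $\Psi^\square X$ and the natural transformations $\delta$ and $\tau$ from scratch out of the lifting properties. In the proof of the final theorem itself, the only subtlety is to notice that because the hom-sets of $|\IdPres[\C]|$ and $|\ttWFS[\C]|$ are subsingletons, all coherence conditions for an equivalence collapse: the unit and counit triangle identities and the naturality squares for $1 \cong |\reltofact \facttorel|$ and $1 \cong |\facttorel \reltofact|$ are automatic. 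Thus the final write-up should be a short paragraph citing \ref{idpres2ttwfsfunctors}, \ref{wfs2idpresfunctors}, and \ref{idpresiso} for the bottom row, and a brief remark that each argument is objectwise and so transports verbatim to the functorial subcategories for the top row.
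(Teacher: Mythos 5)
Your proposal is correct and takes essentially the same route as the paper: it assembles Corollary \ref{idpres2ttwfsfunctors}, Corollary \ref{wfs2idpresfunctors}, and Proposition \ref{idpresiso} for the bottom row, and obtains the top row by observing that $\reltofact$ and $\facttorel$ preserve functoriality (so the restrictions, and hence the equivalence, carry over to $\fIdPres[\C] \leftrightarrows \fttWFS[\C]$). The one quibble is your remark that ``the proset truncation erases any failure of naturality of those chosen lifts'' --- a morphism in $|\fIdPres[\C]|$ or $|\fttWFS[\C]|$ still requires the existence of a genuinely natural transformation, so the truncation does not repair non-natural lifts --- but the paper's own proof is equally terse at exactly this point, resting only on the commutativity of the two squares.
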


\begin{proof}
The fact that $ \facttorel $ and $\reltofact $ restrict to functors $ \ttWFS[\C] \leftrightarrows \IdPres[\C] $ is proven in Corollary \ref{wfs2idpresfunctors} and Corollary \ref{idpres2ttwfs}. 
Then consider an object or morphism $X$ of $ \fttWFS[\C]$. We know that $ \facttorel (X) \in  \IdPres[\C] \cap \frel[\C] = \fIdPres[\C]$ so $ \facttorel $ restricts to a functor 
$ \fttWFS[\C] \to \fIdPres[\C] $. Similarly, $\reltofact $ restricts to a functor $\fIdPres[\C] \to\fttWFS[\C]$.

The fact that $| \facttorel |$ and $|\reltofact |$ constitute an equivalence $|\IdPres[\C] | \simeq | \ttWFS[\C]|$ is proven in Corollary \ref{wfs2idpresfunctors} and Proposition \ref{idpresiso}. Since both squares in the diagram $(*)$ above commute, this restricts to an equivalence $ |\fIdPres[\C] | \simeq | \fttWFS[\C]|$.
\end{proof}

%

We then interpret this in the following theorem.
\begin{thm}\label{mainthmcor}
Consider a category $\C$ with finite limits.
The following properties of any weak factorization system $(\mathsf L, \mathsf R)$ on $\C$ are equivalent:
\begin{enumerate}
\item it has an $\Id$-presentation;
\item it is type-theoretic;
\item it is generated by a Moore relation system;
\item $(\C, \mathsf R)$ is a display map category modeling $\Sigma$- and $\Id$-types.
\end{enumerate}
\end{thm}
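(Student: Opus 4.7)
The plan is to assemble this equivalence from results proven in the preceding sections; this final theorem is essentially a restatement and bookkeeping exercise rather than a new piece of mathematics.

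For (1)$\Leftrightarrow$(2), I would appeal to the equivalence of Theorem \ref{mainthm}. The existence of an $\Id$-presentation $R$ for $(\mathsf L, \mathsf R)$ yields, via Theorem \ref{idpres2ttwfs}, a type-theoretic weak factorization structure $\reltofact(R)$ whose underlying weak factorization system is $(\mathsf L, \mathsf R)$. Conversely, if $(\mathsf L, \mathsf R)$ is type-theoretic, I would pick any representative $W \in \ttWFS[\C]$ and apply Theorem \ref{wfs2idpres} to conclude that $\facttorel(W)$ is an $\Id$-presentation of $(\mathsf L, \mathsf R)$. For (1)$\Leftrightarrow$(3), I would cite Theorem \ref{idpres2mrs}, which identifies the notions of Moore relation system and $\Id$-presentation at the level of objects in $\rel[\C]$. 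Since both ``generated by a Moore relation system'' and ``has an $\Id$-presentation'' mean precisely that $(\mathsf L, \mathsf R) = [\reltofact(R)]$ for a relation $R$ of the relevant kind, the equivalence is immediate.

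For (1)$\Leftrightarrow$(4), the forward direction combines several observations: given an $\Id$-presentation $I$ of $(\mathsf L, \mathsf R)$, all objects of $\C$ are fibrant in $[\reltofact(I)] = (\mathsf L, \mathsf R)$ by Proposition \ref{relfact2fib}, so $(\C, \mathsf R)$ is a display map category that moreover models $\Sigma$-types by Example \ref{wfssigma}; that it also models $\Id$-types is precisely the content of Proposition \ref{Idtypesonobj2mor}. For the reverse direction, a display map category $(\C, \mathsf R)$ modeling $\Sigma$- and $\Id$-types supplies, by restriction to the slice over a terminal object, a model $I_*$ of $\Id$-types on objects; the identification $[\reltofact(I_*)] = ({}^\boxslash \mathsf R, ({}^\boxslash \mathsf R)^\boxslash) = (\mathsf L, \mathsf R)$, discussed in Section \ref{sec:dmc} and originally due to \cite[Thm.~2.8]{Emm14}, then exhibits $I_*$ as an $\Id$-presentation of $(\mathsf L, \mathsf R)$.

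Since all the substantive work lies in the results being invoked, there is no serious obstacle in this proof; the only subtlety worth flagging is the distinction between $\Id$-types on objects and $\Id$-types on arbitrary display maps. The definition of $\Id$-presentation requires only the former, while condition (4) provides the latter; Proposition \ref{Idtypesonobj2mor} is exactly what bridges this asymmetry, so I would be careful to cite it in the (1)$\Rightarrow$(4) direction and to use only $I_*$ (and not the whole family $\{I_Y\}$) in the (4)$\Rightarrow$(1) direction.
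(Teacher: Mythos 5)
Your proposal is correct and matches the paper's own proof, which is likewise a bookkeeping assembly of Theorems \ref{idpres2mrs}, \ref{wfs2idpres}, \ref{relfact2fib}, Example \ref{wfssigma}, and Proposition \ref{Idtypesonobj2mor}; the only cosmetic difference is that you obtain (1)$\Rightarrow$(2) directly from Theorem \ref{idpres2ttwfs}, whereas the paper routes it through (3) via Theorem \ref{mrs2ttwfs}. Your expansion of the paper's ``Clearly, (4) implies (1)'' via $I_*$ and the identification $[\reltofact(I_*)] = ({}^\boxslash \mathsf R, ({}^\boxslash \mathsf R)^\boxslash) = (\mathsf L, \mathsf R)$ is exactly the argument sketched in Section \ref{sec:dmc}, and your flag about the objects-versus-display-maps asymmetry is well placed.
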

\begin{proof}
The equivalence between (1) and (3) appears as Theorem \ref{idpres2mrs}.

That (2) implies (1) is Theorem \ref{wfs2idpres}.

That (3) implies (2) is Theorem \ref{mrs2ttwfs}.

Clearly, (4) implies (1).

By Theorem \ref{relfact2fib}, (1) implies that all objects are fibrant in $(\mathsf L, \mathsf R)$, and thus $(\C, \mathsf R)$ is a display map category modeling $\Sigma$-types (Example \ref{wfssigma}). By Proposition \ref{Idtypesonobj2mor}, (1) implies that $(\C, \mathsf R)$ models $\Id$-types. Thus, (1) implies (4).
\end{proof}

\begin{thm}\label{mainthmcor2}
Consider a category $\C$ with finite limits and a weak factorization system $(\mathsf L, \mathsf R)$ satisfying the equivalent statements of the preceding theorem, \ref{mainthmcor}. 

If $(\C, \mathsf R)$ models pre-$\Pi$-types, then it models $\Pi$ types.
In particular, if $\C$ is locally cartesian closed, then $(\C, \mathsf R)$ models $\Pi$-types.
\end{thm}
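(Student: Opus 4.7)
The plan is to observe that this theorem is essentially a direct application of Proposition \ref{pre2pitypes} combined with the equivalent characterizations given by Theorem \ref{mainthmcor}. So I would first unpack what the hypotheses give us, then invoke Proposition \ref{pre2pitypes}, then handle the locally cartesian closed case as a corollary.

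For the first claim, I would argue as follows. By Theorem \ref{mainthmcor}, the weak factorization system $(\mathsf L, \mathsf R)$ being equivalent to (2) means it is type-theoretic. Unpacking the definition of type-theoretic, this gives us two facts: (i) every object of $\C$ is fibrant with respect to $(\mathsf L, \mathsf R)$, and (ii) $\mathsf L$ is stable under pullback along $\mathsf R$. Together with the hypothesis that $(\C, \mathsf R)$ models pre-$\Pi$-types, these are exactly the hypotheses needed to apply Proposition \ref{pre2pitypes}, which then yields that $(\C, \mathsf R)$ models $\Pi$-types.

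For the second claim, suppose $\C$ is locally cartesian closed. Then for any morphisms $g\colon W \to X$ and $f\colon X \to Y$ in $\mathsf R$, the right adjoint $\Pi_f$ to pullback $f^*\colon \C/Y \to \C/X$ exists, and $\Pi_f g\colon \Pi_f W \to Y$ provides the required pre-$\Pi$-type: the universal property
\[ \C/X(f^* y, g) \cong \C/Y(y, \Pi_f g) \]
natural in $y$ is precisely the adjunction $f^* \dashv \Pi_f$ restricted to objects with codomain $Y$. Hence $(\C, \mathsf R)$ models pre-$\Pi$-types, and the first part of the theorem then gives that $(\C, \mathsf R)$ models $\Pi$-types.

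There are no real obstacles here; the work has already been done in Proposition \ref{pre2pitypes} and in establishing Theorem \ref{mainthmcor}. The only thing worth emphasizing carefully is the bookkeeping in the locally cartesian closed case, where one should note that the pre-$\Pi$-types needed in Definition \ref{pilem} are required only for $f, g \in \mathsf R$, but the existence of dependent products in $\C$ supplies a candidate $\Pi_f g$ for every $f$, and its universal property $\C/X(f^* y, g) \cong \C/Y(y, \Pi_f g)$ holds without any fibrancy assumption on $y$; this is all that Definition \ref{pilem} demands for the pre-$\Pi$-type hypothesis.
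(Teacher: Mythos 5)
Your proposal is correct and follows the same route as the paper: invoke Theorem \ref{mainthmcor} to conclude that $(\mathsf L, \mathsf R)$ is type-theoretic, then apply Proposition \ref{pre2pitypes}. The paper's proof is terser and leaves the locally cartesian closed case implicit, whereas you spell out that the adjunction $f^* \dashv \Pi_f$ supplies the pre-$\Pi$-types; this is a correct and worthwhile elaboration, not a different argument.
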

\begin{proof}
By the previous theorem, \ref{mainthmcor}, $(\mathsf L, \mathsf R)$ is type-theoretic. Therefore, by Proposition \ref{pre2pitypes}, $(\C, \mathsf R)$ models $\Pi$-types.
\end{proof} 

\addcontentsline{toc}{section}{References}
\bibliographystyle{halpha}
\bibliography{article}

\end{document}